\newtheorem{Thm}[equation]{Theorem}
\newtheorem{Lem}[equation]{Lemma}
\newtheorem{Cor}[equation]{Corollary}
\newtheorem{Prop}[equation]{Proposition}
\theoremstyle{remark}
\newtheorem{Rem}[equation]{Remark}
\theoremstyle{remark}
\theoremstyle{definition}
\newtheorem{Def}[equation]{Definition}
\numberwithin{equation}{section}
\newcommand{\C}{\mathbb{C}}           
\newcommand{\Z}{\mathbb{ Z}}           
\newcommand{\Ad}{\operatorname{Ad }}             
\newcommand{\ad}{\operatorname{ad}}             
\newcommand{\End}{\operatorname{End}}
\newcommand{\Hom}{\operatorname{Hom}}
\newcommand{\rank}{\operatorname{rank}}
\renewcommand{\ker}{\operatorname{ker }}
\newcommand{\im}{\operatorname{im }}
\newcommand{\Gr}{\operatorname{Gr }}
\newcommand{\codim}{\operatorname{codim}}
\newcommand{\pa}{\partial}
\newcommand{\pastar}{{\pa}^{adj}}
\newcommand{\Cstar}{\C^*} 
\newcommand{\fb}{{\mathfrak b}}
\newcommand{\fg}{{\mathfrak g}}
\newcommand{\fh}{{\mathfrak h}}
\newcommand{\fk}{{\mathfrak k}}
\newcommand{\fl}{{\mathfrak l}}
\newcommand{\fp}{{\mathfrak p}}
\newcommand{\fr}{{\mathfrak r}}
\newcommand{\fu}{{\mathfrak u}}
\newcommand{\fz}{\mathfrak z}
\newcommand{\ga}{\alpha}
\newcommand{\gb}{\beta}
\newcommand{\gd}{\delta}
\newcommand{\gD}{\Delta}
\newcommand{\gDI}{{\Delta}_I}
\newcommand{\gre}{\epsilon}
\renewcommand{\gg}{\gamma}
\newcommand{\gG}{\Gamma}
\newcommand{\gl}{\lambda}
\newcommand{\gL}{\Lambda}
 \newcommand{\ch}{\mathcal{H}}
 \newcommand{\cl}{\mathcal{L}}
 \newcommand{\cm}{\mathcal{M}}
 \newcommand{\co}{\mathcal{O}}
 \newcommand{\cs}{\mathcal{S}}
 \newcommand{\cg}{\mathcal{G}}
 \newcommand{\cb}{\mathcal{B}}
  \newcommand{\cz}{\mathcal{Z}}
\newcommand{\flI}{{\mathfrak l}_I}
\newcommand{\fzI}{{\mathfrak z}_I}
\newcommand{\fpI}{{\mathfrak p}_I}
\newcommand{\fuI}{{\mathfrak u}_I}
\newcommand{\fuIminus}{{\mathfrak u}_{I,-}}
\newcommand{\fpIminus}{{\mathfrak p}_{I,-}}
\newcommand{\frI}{{\mathfrak r}_I}
\newcommand{\flgDm}{\tilde{{\fl}_{\gD}}}
\newcommand{\chstar}{{\ch}^*}
\renewcommand{\tilde}{\widetilde}
\renewcommand{\bar}[1]{\overline{#1}}
\begin{document}
\parskip=4pt
\baselineskip=14pt

\title[The Belkale-Kumar cup product]
{The Belkale-Kumar cup product and relative Lie algebra
cohomology}

\author{Sam Evens}
\address{
Department of Mathematics,
University of Notre Dame,
Notre Dame, IN 46556
}
\email{sevens@nd.edu}

\author{William Graham}
\address{
Department of Mathematics,
University of Georgia,
Boyd Graduate Studies Research Center,
Athens, GA 30602
}
\email{wag@math.uga.edu}
\thanks{Evens and Graham were supported by the National Security Agency}

\address{Department of Math, University of British Columbia, 1984 Mathematics
Rd, Vancouver, BC V6T 1Z2}
\email{erichmond@math.ubc.ca}

\maketitle

\begin{center}
\textsc{with an appendix by Sam Evens, William Graham, and Edward Richmond}
\end{center}

\begin{abstract}
We study the Belkale-Kumar family of cup products on the cohomology of
a generalized flag variety. We give an alternative construction of
the family using relative Lie algebra cohomology, and in particular,
identify the Belkale-Kumar cup product with a relative Lie algebra
cohomology ring for every value of the parameter. As a 
consequence, we extend a fundamental disjointness result of Kostant
to a family of Lie algebras. In an appendix, written jointly with
Edward Richmond, we extend a Levi movability result of Belkale and
Kumar to arbitrary parameters.
\end{abstract}


\noindent 

\section{Introduction} \label{s.intro}

In a remarkable 2006 paper \cite{BeKu:06}, Belkale and Kumar 
introduced a new family of cup products on the cohomology
$H^*(X, \C)$, where $X=G/P$ is a generalized flag variety.
Here $P$ is a parabolic subgroup of a complex connected semisimple
group $G$, and we let $m=\dim(H^2(X, \C))$. For each
$\tau \in \C^m$, Belkale and Kumar construct a product
$\odot_\tau$ on the space $H^*(X, \C)$. They show that for generic
$\tau$, the ring $(H^*(X, \C), \odot_\tau)$ is isomorphic to the usual
cup product on $H^*(X, \C)$, while for $\tau=0:= (0, \dots, 0)
\in \C^m$, the degenerate cup product 
 $(H^*(X, \C), \odot_0)$ is closely related to the
cohomology ring of the nilradical of the Lie algebra of $P$.
Their work was motivated by applications to the Horn problem,
and using their deformed cup product when $\tau=0$, they gave
a more efficient solution to the Horn problem for $G$ not of type
$A$.  

The purpose of this paper is to show that
the Belkale-Kumar family of cup products has an intrinsic
definition as the
product on the space of global sections
of a vector bundle defined using relative Lie algebra cohomology.
 As a consequence, we are able to identify
$(H^*(X, \C), \odot_\tau) \cong H^*(\fg_t, \fl_{\gD})$, where
$t\in \C^m$, $\tau = t^2$, $\fg_t$ is a certain Lie subalgebra of
$\fg \times \fg$, and $\fl_{\gD}$ is the diagonal embedding of
a Levi factor $\fl$ of the Lie algebra of $P$ in $\fg \times \fg$.
For generic $\tau$ and for $\tau=0$, this result is proved
in \cite{BeKu:06}, but other cases appear to be new. In addition,
we generalize a well-known disjointness theorem of Kostant
\cite{Kos:63}
(see Theorem \ref{t.disjointness}).

In more detail, we identify $(\Cstar)^m \cong Z$, the center of
a Levi factor $L$ of $P$.  Using an idea 
important in the study of the DeConcini-Procesi compactification,
 for each $t\in \C^m$, 
 we define a Lie subalgebra
$\fg_t$ containing the diagonal embedding $\fl_{\gD}$ of $\fl$
in $\fg \times \fg$, such that if $t \in Z$, then
$\fg_t = \Ad(t,t^{-1})\fg_{\gD}$, where $\fg_{\gD}$ is the
diagonal in $\fg \times \fg$.  We prove that the relative 
Lie algebra cohomology rings $H^*(\fg_t, \fl_{\gD})$ are
the fibers of an algebraic vector bundle on $\C^m$.  The space
$\chstar(\C^m)$ of global sections of this vector bundle
is an algebra over the ring $\C[t_1, \ldots, t_m]$ of polynomial
functions on $\C^m$.  

The Belkale-Kumar family of cup products is in fact defined as a product
$\odot$ on the ring $H^*(X, \C) \otimes_{\C} \C[\tau_1, \ldots, \tau_m]$.  
( Belkale and Kumar work with integer coefficients, but since all
the rings are torsion-free, nothing is lost by using complex coefficients.)
If we set $\tau_i = t_i^2$, we obtain a product, again denoted $\odot$, on the
ring $H^*(X, \C) \otimes_{\C} \C[t_1, \ldots, t_m]$.
The following is the main result of our paper,
and identifies the Belkale-Kumar cup product with a relative Lie
algebra cohomology ring.

\begin{Thm} \label{t.mainintro} (see Theorem \ref{t.BKcomp}) 
The rings $(H^*(X, \C) \otimes_{\C} \C[t_1, \ldots, t_m], \odot)$
and $\chstar(\C^m)$ are isomorphic.
  Hence, if $t \in \C^m$ and
 $\tau = t^2 \in \C^m$, then $H^*(\fg_t, \fl_{\gD})
\cong (H^*(G/P), \odot_{\tau})$. 
\end{Thm}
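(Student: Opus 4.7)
The plan is to realize both sides as free $\C[t_1, \ldots, t_m]$-algebras of the same rank and then construct a canonical comparison isomorphism between them, anchored by the two extreme strata that \cite{BeKu:06} already handles: generic $\tau$ (the ordinary cup product) and $\tau = 0$ (a product closely tied to the cohomology of the nilradical of $\fp$).

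First I would use the identification $\fg_t = \Ad(t, t^{-1}) \fg_{\gD}$ valid for $t$ in the open subtorus $Z \subset \C^m$. Since $\fl$ centralizes $Z$, the automorphism $\Ad(t, t^{-1})$ fixes $\fl_{\gD}$ pointwise, so it induces a ring isomorphism $H^*(\fg_{\gD}, \fl_{\gD}) \to H^*(\fg_t, \fl_{\gD})$ depending algebraically on $t$. Combined with the classical identification $H^*(\fg_{\gD}, \fl_{\gD}) \cong H^*(X, \C)$, this yields an algebraic trivialization of the cohomology vector bundle over $Z$. I would then define the comparison map
\[
\Phi : \bigl(H^*(X, \C) \otimes_\C \C[t_1, \ldots, t_m], \odot\bigr) \longrightarrow \chstar(\C^m)
\]
by combining this trivialization with the Belkale--Kumar theorem that $(H^*(X, \C), \odot_\tau)$ is isomorphic to the ordinary cup product for generic $\tau$. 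The decisive observation is that $\Ad(t, t^{-1})$ acts on each $Z$-weight component of $H^*(\fg_{\gD}, \fl_{\gD})$ by a specific character, and after the substitution $\tau_i = t_i^2$ this rescaling should match the combinatorial recipe Belkale--Kumar use to define $\odot$ on $H^*(X, \C) \otimes \C[\tau_1, \ldots, \tau_m]$.

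Finally, I would extend $\Phi$ from $Z$ to all of $\C^m$ using the free-module structure: both sides are free $\C[t_1, \ldots, t_m]$-modules of rank $\dim_{\C} H^*(X, \C)$, and a module map defined on the dense open set $Z$ is determined by that restriction. The hardest step will be verifying that the so-extended $\Phi$ remains a \emph{ring} homomorphism --- and in particular an isomorphism --- at points of $\C^m \setminus Z$, most delicately at the origin. By Zariski continuity the multiplicativity descends from $Z$ to $\C^m$ provided the target is genuinely a bundle, so one must show the fiber dimension $\dim H^*(\fg_t, \fl_{\gD})$ is constant in $t$. I would establish this by combining the generic case with the Belkale--Kumar identification $(H^*(X, \C), \odot_0) \cong H^*(\fg_0, \fl_{\gD})$ at the origin, together with a semicontinuity argument on the Chevalley--Eilenberg complex, ruling out dimension jumps elsewhere. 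This Lie-algebraic rigidity of $H^*(\fg_t, \fl_{\gD})$ is the real technical heart of the proof.
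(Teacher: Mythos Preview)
Your overall architecture matches the paper's: establish constant fiber dimension by semicontinuity pinned at the two extreme strata, then build an explicit comparison map by transporting Schubert classes through the $\Ad(t,t^{-1})$-automorphism and matching the resulting characters with the Belkale--Kumar rescaling. The constant-rank step is essentially what the paper does in Theorem~\ref{t.rkhconstant}.

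The genuine gap is the extension step. You write that you will ``extend $\Phi$ from $Z$ to all of $\C^m$ using the free-module structure: both sides are free $\C[t_1,\ldots,t_m]$-modules of rank $\dim_\C H^*(X,\C)$, and a module map defined on the dense open set $Z$ is determined by that restriction.'' Determined, yes; but a $\C[t_i^{\pm 1}]$-linear isomorphism between the localizations of two free $\C[t_i]$-modules of the same rank need not come from a $\C[t_i]$-linear map, let alone an isomorphism (think of multiplication by $t_1^{-1}$). Concretely, your map over $Z$ sends $\gre_w$ to the class of $\Gamma_t(s_w)/F_w(t)$, and you must show that this rational section of $\chstar$ is actually regular on all of $\C^m$ and that the resulting sections form a basis at every point, in particular at $0$. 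Neither of these is automatic from knowing that $\chstar$ is a vector bundle.

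The paper fills this gap by invoking Kostant's explicit harmonic representatives: one writes $s_w=(1-R)^{-1}k^w$ where $k^w$ is the Kostant harmonic form of $Z$-weight $F_w$ and $R=-L_0E$ is nilpotent and strictly raises $Z$-weight (Lemma~\ref{l.lemr}). This weight analysis shows that $\Gamma_t(s_w)$ is a polynomial divisible by $F_w(t)$ (Lemma~\ref{l.gammasw}), so $G_w(t)=\Gamma_t(s_w)/F_w(t)$ is regular on $\C^m$ with $G_w(0)=k^w$. Kostant's theorem then gives that $\{[k^w]\}$ is a basis of $\chstar_0$, and $Z$-equivariance propagates this to a basis of $\chstar_t$ for all $t$ (Theorem~\ref{t.main1}). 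Only after this can one compare structure constants and conclude. Your sketch does not identify this divisibility step or the role of Kostant's operator $R$; without it, the passage from $Z$ to $\C^m$ is unjustified. A secondary point: your phrase ``$\Ad(t,t^{-1})$ acts on each $Z$-weight component of $H^*(\fg_\Delta,\fl_\Delta)$'' is imprecise, since $\Gamma_s$ intertwines $d_1$ with $d_s$ rather than acting on a fixed cohomology; the weight decomposition lives on cochains, and that is exactly where the divisibility argument has to take place.
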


This theorem may be regarded as an alternative construction of
the Belkale-Kumar cup product. 
We recall some details from \cite{BeKu:06} to explain the difference
between our approach and the definition of the family of cup products
in \cite{BeKu:06}.  Let $B \subset P$ be a Borel subgroup of $G$,
 let $H$ be a maximal torus of $B$ containing $Z$, and
let the positive roots $R^+$ be the set of roots of $H$ in  the Lie algebra of $B$.
We let $\{ \alpha_1, \dots, \alpha_n \}$ be the corresponding simple roots.
Let $R^+(\fl)$ be the set of roots of $R^+$ such that the corresponding
root space is in the Lie algebra of $L$.  Let $W=N_G(H)/H$ be the Weyl
group and let $W^P$ be the set of $w \in W$ such that $w(R^+(\fl)) \subset R^+$.
Number the simple roots so that $\ga_1, \dots,
\ga_m \not\in R^+(\fl)$ and $\ga_{m+1}, \dots, \ga_n \in R^+(\fl)$.
For a root $\ga \in R^+$, we write $\ga = \sum n_i \ga_i$ for
nonnegative integers $n_i$. Let $e^{\ga}$ be the character of $H$
determined by the root $\ga $.  If $t=(t_1, \dots, t_m) \in
(\Cstar)^m \cong Z$, then $e^{\ga}(t)=\prod_{i=1}^m t_i^{n_i}$.
Thus, defining $F_w(t) = \prod_{\ga \in R^+ \cap w^{-1} R^-} e^{\ga}(t)$
for $t\in Z$,
we see that $F_w(t)$ extends to a polynomial function on $\C^m$.
In \cite{BeKu:06}, the Belkale-Kumar  cup product is defined by the formula
\begin{equation}\label{e.introbk}
\gre_u \odot \gre_v = \sum \frac{F_w(t)}{F_u(t) F_v(t)} c_{uv}^w \gre_w.
\end{equation}
To show that the product $\odot_t$ is defined for all $t\in \C^m$,
Belkale and Kumar use methods from geometric invariant theory to prove
that the quotients $\frac{F_w(t)}{F_u(t)F_v(t)}$ are regular on
$\C^m$ when $c_{uv}^w \not= 0$. 

In our approach, we begin with the $\C[t_1, \dots, t_m]$-algebra
$\chstar(\C^m)$.
For each $w\in W^P$, we find a section $[G_w(t)]$ in $\chstar(\C^m)$,
such that these sections form a basis of $\chstar$ over $\C[t_1, \dots, t_m]$,
and such that the product is given by
\begin{equation} \label{e.introrellie}
[G_u(t)] \cdot [G_v(t)] = \sum \frac{F_w(t)}{F_u(t) F_v(t)} c_{uv}^w [G_w(t)].
\end{equation}
Thus, the algebra $\chstar(\C^m)$
has the same structure constants as those given by the Belkale-Kumar product.
It follows from this that $H^*(X, \C) \otimes_{\C} \C[t_1, \ldots, t_m]$,
equipped with the Belkale-Kumar product, coincides with
the cohomology algebra $\chstar(\C^m)$.  In particular, this proves that
$\frac{F_w(t)}{F_u(t) F_v(t)}$ is regular when the Schubert coefficient
$c^w_{uv}$ is nonzero, without recourse to methods from geometric
invariant theory.


Our approach relates the Belkale-Kumar cup
product to geometry of the DeConcini-Procesi compactification. 
 Moreover, while Belkale
and Kumar require a technical argument using filtrations to relate
$H^*(G/P, \odot_0)$ to relative Lie algebra cohomology, in our approach,
this relation follows
directly from our construction.  
However, our construction does not directly relate
to the notion of $L$-movability of Schubert cycles which motivates the
deformed cup product in \cite{BeKu:06}, although we do extend this
relation in the appendix to this paper, which is jointly written
with Edward Richmond. We remark that our work
is partly 
inspired by ideas from Poisson geometry made explicit in \cite{EvLu:99}
and \cite{EvLu:06}.  In future work, we hope to use Poisson geometry to
study the Horn problem and the Belkale-Kumar cup product. Finally,
the relation to the DeConcini-Procesi compactification suggests
that we can extend the Belkale-Kumar cup product to infinity by
 considering the
closure of $\C^m$ in the compactification, which is a toric
variety, at least when $P$ is a Borel subgroup. 

We summarize the contents of this paper. In Section \ref{s.familylie},
we define the Lie subalgebras $\fg_t$ for $t\in \C^m$, and prove some
properties of an action of $Z$ on $\C^m$. In Section \ref{s.rellie},
we introduce the induced relative Lie algebra cohomology bundle and
prove it has constant rank. In Section \ref{s.globalsch}, we introduce
cocycles $\cs_w(t)$ for $t\in \C^m$, and show that the cocycles
$G_w(t)=\frac{\cs_w(t)}{F_w(t)}$ give representatives for a basis of
$H^*(\fg_t, \fl_{\gD})$. We use this last result to prove our main
theorem.  In Section \ref{s.disjoint},
we generalize a disjointedness theorem of Kostant to the Lie
subalgebras $\fg_t$, for $t\in \C^m$. In the Appendix, jointly
written with Edward Richmond, we extend a Levi movability result
of Belkale and Kumar to arbitrary parameter values.

We would like to thank Shrawan Kumar for several inspiring conversations.
The idea for this work arose from an illuminating lecture given by 
Kumar at the 2007 Davidson AMS meeting. The first author would like
to thank the University of Georgia for hospitality during part of the preparation
of this paper. We would also like to thank Bill Dwyer  for a
 useful conversation.

\section{A family of Lie algebras}\label{s.familylie}
In this section, we introduce notation and the family of Lie algebras $\fg_t$
mentioned in the introduction.

\subsection{Preliminaries} \label{s.prelim} 
Let $\fg$ be a semisimple complex Lie algebra,
and $G$ the corresponding  adjoint group,
with $B \supset H$ a Borel and maximal torus, respectively. Let
$\fb$ and $\fh$ denote the Lie algebras of $B$ and $H$. Let
$R$ be the set of roots of $\fh$ in $\fg$, and we choose the positive system
$R^+$ of roots so  that
the roots of $\fb $ are positive.  
Given a root $\ga \in \fh^*$, let $\fg_\ga$ be the corresponding
root space, let $e_\ga \in \fg_\ga$ denote a nonzero
root vector,  and
let $e^{\ga}$ denote the corresponding character of $H$.
Let $\gD = \{ \ga_1, \ldots, \ga_n \}$ be the
simple roots, which give a basis of $\fh^*$.
For a subset $I \subset \{ 1, \dots, n \}$, let
$\gDI = \{ \ga_i : i\in I \}$, let $\flI$ be the
Levi subalgebra generated by $\fh$ and the root spaces
$\fg_{\pm \ga_i }$ for $i\in I$, and let $\fzI$ denote
the center of $\flI$.
Let $\fpI = \fb + \flI$ be the corresponding 
standard parabolic subalgebra
and let $\fu=\fuI$ be the nilradical of $\fpI$.
 Let $R^+(\flI)=\{ \ga \in R^+ : \fg_\ga \subset \flI \}$
and let $R^+(\fuI) = R^+ - R^+(\fl)$ be the roots of $\fuI$.
Let $\fuIminus = \sum_{\ga \in R^+(\fuI) } \fg_{-\ga}$ and
let $\fpIminus = \flI + \fuIminus$ be the opposite nilradical.
For simplicity, we write $\fl=\flI$, $\fp=\fpI$, $\fu=\fuI$,
$\fu_-=\fuIminus$, and so forth when $I$ is fixed.  We let $P, L$ and
$Z$ denote the closed connected subgroups of $G$ with Lie algebras
$\fp$, $\fl$, and $\fz$ respectively.

Given a subspace $V$ of $\fg$, let $V_{\gD}$ denote the image
of $V$ under the diagonal map $\fg \to \fg \times \fg$,
and let $V_{-\gD}$ denote the image of $V$ under the
antidiagonal map $\fg \to \fg \times \fg$, $X \mapsto (X,-X)$.
 We let
$V'$ and $V''$ be the subspaces of $\fg \times \fg$ defined as
$V' = V \times \{ 0 \}$, $V'' = \{ 0 \} \times V$.
If $X \in \fg$, define $X_{\gD}=(X,0)$, $X_{-\gD}=(X,-X)$, and define
  $X'$, $X''$ in $\fg \times \fg$ by
$X' = (X,0)$, $X'' = (0,X)$.  We
 denote the Killing form on $\fg$ or on $\fg \times \fg$ by $(\cdot, \cdot )$..

Let 
\begin{equation} \label{e.def}
\fr=\frI = \fu' \oplus \fu_-''
\end{equation}
and
\begin{equation} \label{e.defrop}
\fr^{op}=\frI^{op} = \fu_-' \oplus \fu''.
\end{equation}
In particular, $[\fu', \fu_-'']=0$. Note that $\fr$ and $\fr^{op}$ are 
the images of two different embeddings of the direct sum Lie algebra
$\fu \oplus \fu_-$ in
$\fg \times \fg$.

\subsection{Schubert classes} \label{s.schubert}
Because we use Lie algebra cohomology to study
the cohomology of the flag variety, we consider homology and
cohomology with complex coefficients.

Let $X = G/P$. Let $W$ be the Weyl group $N_G(H)/H$,
and consider its subgroup $W_P=N_L(H)/H$. Let $W^P = \{ w\in W :
w(R^+(\fl)) \subset R^+ \}$, and recall that the
map $W^P \to W/W_P$ given by $w\mapsto wW_P$ is bijective.
Let $w_0$ denote the long element of the Weyl group and
let $w_{0,P}$ denote the long element of the Weyl group
of $W_P$.  

\begin{Rem}\label{r.klm}
For $w\in W^P$, then $w_0 w w_{0, P}$ is in  $W^P$ 
 by \cite[Theorem 2.6]{KLM:03}
\end{Rem}

If $w \in W$, let $X_w$ denote the closure of the Schubert cell
$B w P$ in $X$.
For an irreducible subvariety $Z$ of dimension $r$ in $X$, 
let $[Z] \in H_{2 r}(X)$ denote the homology class of $Z$.
The set $\{ [X_w]  \ | \ w\in W^P, \ 2 l(w) = k \}$ is a basis for the group $H_k(X)$.
Let $Y_w$ denote the closure of $B_{-} w P$ in $X$.

As $B_- = w_0 B w_0$ and $w_{0,P}$ is represented by an element
of $P$, we see that
$$
B_-  w P = w_0 B w_0 w w_{0,P} P.
$$
Thus, $Y_w = w_0 X_{w_0 w w_{0.P}}$.
As left multiplication by $w_0$ does not change the homology class,
we see that $[Y_w] = [X_{w_0 w w_{0,P}}]$.

As in Belkale-Kumar, let $\{ \gre_w \ | \ w\in W^P, \ 2 l(w) = k \}$
 denote the dual
basis in cohomology (under the Kronecker pairing between
$H^k(X)$ and $H_k(X)$, which is not related to Poincar\'e duality),
i.e.,
$$
\langle \gre_u, [X_v] \rangle = \gd_{u,v}.
$$

There is a Poincare duality map
$$
\cap [X]: H^i (X) \to H_{2N - i}(X),
$$
where $N$ is the complex dimension of $X$.   We have
$$
\gre_w \cap [X] = [Y_w] = [X_{w_0 w w_{0,P}}].
$$
Alternatively,
$$
\gre_{w_0 w w_{0,P}} \cap [X] = [X_w].
$$

Let $\gL_w = \gre_{w_0 w w_{0, P}} \in H^{2N - 2 l(w)}(X)$.  Belkale-Kumar
denote $\gL_w$ by $[\bar{\gL}_w^P]$.

\subsection{A family of Lie algebras} \label{s.subfamily}
If $s \in H$, $X \in \fg$, we often write $s X$ or $s \cdot X$ for $(\Ad s) X$.
 Let $d = \dim(\fg)$, and 
let $\Gr(d, \fg \times \fg)$ denote the Grassmannian
of $d$-dimensional subspaces of $\fg \times \fg$.

For $\ga \in R^+$, let $\ga = \sum_{i=1}^n k_i(\ga) \ga_i$.
Consider the monomial $t_\ga :=  t_1^{k_1(\ga)} \cdots t_n^{k_n(\ga)}
\in \C[t_1, \dots, t_n]$, and note that $t_{\ga_i}=t_i$ for each simple root $\ga_i$.
For $\ga \in R^+$, let $E_\ga(t)=(t_{\ga}^2 e_\ga, e_\ga)$ and 
$E_{-\ga}(t)=(e_{-\ga}, t_{\ga}^2 e_{-\ga})$.

 For $t$ in $\C^n$, let 
\begin{equation} \label{e.gtdefn}
\fg_t := \fh_{\gD} + \sum_{\ga \in R^+} (\C E_\ga(t)
+ \C E_{-\ga}(t)).
\end{equation}
It is routine to verify that $\fg_t \in \Gr(d, \fg \times \fg)$.

\begin{Lem} \label{l.cnembedding}
The map $F:\C^n \to \Gr(d, \fg \times \fg)$ given by $F(t)=\fg_t$
 is a morphism of
 algebraic varieties. 
\end{Lem}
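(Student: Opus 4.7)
The plan is to verify that $F$ is a morphism by factoring it through the Pl\"ucker embedding $\mathrm{Pl}\colon \Gr(d,\fg \times \fg) \hookrightarrow \mathbb{P}(\bigwedge^d(\fg \times \fg))$, which sends a subspace $V$ to the line $\bigwedge^d V$ and is a closed immersion of algebraic varieties. Hence it suffices to produce a morphism $\tilde F\colon \C^n \to \bigwedge^d(\fg \times \fg) \setminus \{0\}$ such that $\tilde F(t)$ spans the line $\bigwedge^d \fg_t$ for every $t$, and then to pass to the quotient $\bigwedge^d(\fg \times \fg) \setminus \{0\} \to \mathbb{P}(\bigwedge^d(\fg \times \fg))$.

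To construct $\tilde F$, fix a basis $h_1,\ldots,h_r$ of $\fh$ and an ordering $\alpha_1,\ldots,\alpha_N$ of $R^+$, and set
\[
\tilde F(t) \;=\; (h_1,h_1)\wedge\cdots\wedge(h_r,h_r)\wedge E_{\alpha_1}(t)\wedge E_{-\alpha_1}(t)\wedge\cdots\wedge E_{\alpha_N}(t)\wedge E_{-\alpha_N}(t).
\]
Each $E_{\pm\alpha}(t)$ has entries that are polynomial in $t$ (namely, $t_\alpha^2$ times a fixed root vector, paired with a fixed root vector), so expanding $\tilde F(t)$ in any fixed basis of $\bigwedge^d(\fg \times \fg)$ yields coordinates that are polynomial in the $t_i$. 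Thus $\tilde F$ is a polynomial map $\C^n \to \bigwedge^d(\fg \times \fg)$, and by construction its value spans $\bigwedge^d \fg_t$ whenever the listed vectors are linearly independent.

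The main point to verify is therefore that $\tilde F(t) \ne 0$ for every $t \in \C^n$, equivalently that the $d$ listed vectors form a basis of $\fg_t$ for every $t$. This is the routine check indicated in the paper: projecting any linear relation to the first coordinate eliminates the $E_{-\alpha}(t)$ contributions (since their first coordinate is the nonzero vector $e_{-\alpha}$, independent of $t$) along with the $\fh$ part, and forces $b_\alpha t_\alpha^2 = 0$; projecting to the second coordinate analogously forces $b_\alpha = 0$ and $c_\alpha t_\alpha^2 = 0$, so altogether every coefficient vanishes regardless of whether any monomial $t_\alpha$ is zero. Granted this, $\tilde F$ factors through $\bigwedge^d(\fg \times \fg) \setminus \{0\}$ and descends to a morphism $\C^n \to \mathbb{P}(\bigwedge^d(\fg \times \fg))$ whose image lies in $\mathrm{Pl}(\Gr(d,\fg \times \fg))$; since $\mathrm{Pl}$ is a closed immersion, this exhibits $F$ as a morphism. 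I do not anticipate a serious obstacle: the only subtlety is the constancy of $\dim\fg_t$ at loci where some $t_\alpha$ vanishes, and this is guaranteed by the complementary roles of the two factors in $E_\alpha(t)$ versus $E_{-\alpha}(t)$.
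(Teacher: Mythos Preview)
Your argument is correct. The route differs from the paper's: rather than factoring through the Pl\"ucker embedding, the paper picks a specific affine chart of the Grassmannian. It fixes the complementary subspace $\fg_- = \fh_{-\gD} \oplus \bigoplus_{\ga \in R^+}(\C e_\ga' \oplus \C e_{-\ga}'')$, observes that $\fg_t \cap \fg_- = 0$ for every $t$, and then writes $\fg_t$ as the graph of a linear map $\tilde F(t)\colon \fg_{t_0} \to \fg_-$; in appropriate bases this map is simply the diagonal matrix $\diag(t_{\gb_1}^2, t_{\gb_1}^2, \ldots, t_{\gb_r}^2, t_{\gb_r}^2, 0, \ldots, 0)$, which is visibly polynomial in $t$. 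Your Pl\"ucker approach is equally standard and arguably cleaner, since it avoids singling out a chart; the paper's chart computation, on the other hand, makes the dependence on $t$ completely explicit as a list of monomials, which may be psychologically helpful given the later role of the functions $t_\ga^2$. Either way the essential content is the same linear-independence check you carried out.
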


\begin{proof}
Let $t_0 := (0,\dots, 0)$ denote the origin of $\C^n$, and
note that 
$$
\fg_{t_0} = \fh_{\gD} + \sum_{\ga \in R_+} (\C (0, e_\ga)
+ \C (e_{-\ga}, 0)),$$
and let
$$
\fg_- := \fh_{-\gD} + \sum_{\ga \in R_+} (\C (e_\ga, 0)
+ \C ( 0, e_{-\ga})).
$$
Consider the affine open set
$$
{\Gr}_{\fg_-} = \{ U \in \Gr(d,\fg \times \fg) : U \cap \fg_- = 0 \}
$$
of $\Gr(d,\fg \times \fg)$.
Then $F(\C^n) \subset {\Gr}_{\fg_-}$ and
the map $\Gamma:\Hom(\fg_{t_0}, \fg_-) \rightarrow {\Gr}_{\fg_-}$ given by
$\phi \mapsto \{ X+ \phi(X) : X \in \fg_{t_0} \}$ is an isomorphism
of affine varieties. To prove the lemma, it suffices to verify that
the map $\tilde{F}:\C^n \to \Hom(\fg_{t_0}, \fg_-)$ such that
$\Gamma \circ \tilde{F} = F$ is a morphism.
For this, we let $h(1), \dots, h(n)$ be a basis of $\fh$,
let $R^+ = \{ \gb_1, \ldots, \gb_r \}$,
 give $\fg_{t_0}$ the basis 
$$
 \{ e_{\gb_1}^{\prime\prime}, e_{-\gb_1}^{\prime},   \ldots,   e_{\gb_r}^{\prime\prime}, 
e_{-\gb_r}^{\prime} \}  \cup
\{ {h(1)}_{\gD}, \dots, {h(n)}_{\gD} \},
$$
and give $\fg_-$ the basis
$$
 \{ e_{\gb_1}^{\prime}, e_{-\gb_1}^{\prime\prime},   \ldots,   e_{\gb_r}^{\prime}, 
e_{-\gb_r}^{\prime\prime} \}  \cup
\{ {h(1)}_{-\gD}, \dots, {h(n)}_{-\gD} \}
$$
With respect to these bases $\tilde{F}(t)$ is a diagonal matrix
with entries
$$
(t_{\gb_1}^2, t_{\gb_1}^2, \dots, t_{\gb_r}^2, t_{\gb_r}^2, 0, \dots, 0).
$$
where $R_+=\{ \ga_1, \dots, \ga_r \}$, and the Lemma follows.
\end{proof}

\begin{Rem}
The map $F:\C^n \to \Gr(d, \fg \times \fg)$ is a composition of
two morphisms from \cite{DePr:83}considered by DeConcini and Procesi
in their study of the wondeful
compactification, from which one can derive the Lemma 
(see \cite[Lemma 2.7 and Proposition 3.7]{EvJo:08}).
\end{Rem}

We identify the maximal torus $H$ as a subset of $\C^n$ via the morphism
$$
H \stackrel{\cong}{\rightarrow} (\Cstar)^n \hookrightarrow \C^n
$$
\begin{equation} \label{e.HintoCn}
s \mapsto (e^{\ga_1}(s), \ldots e^{\ga_n}(s)) = (s_1, \dots, s_n).
\end{equation}
We denote $1 := (1, \dots, 1) \in \C^n$, and if 
 $a=(a_1, \dots, a_n)$ and $b=(b_1, \dots, b_n)$,
we let $ab = (a_1 b_1, \dots, a_n b_n) \in \C^n$.

Note that $H$ acts on $F(\C^n)$ via the action
\begin{equation} \label{e.fgt}
s\cdot \fg_t = (s, s^{-1}) \fg_t = \{ ( (\Ad s)X, (\Ad s^{-1})Y) \ | (X, Y) \in \fg_t \}.
\end{equation}

\begin{Rem} \label{r.gtisom}
For $s\in H$ and $t\in \C^n$,
$$
s\cdot \fg_t = \fg_{s t}.
$$
Indeed, this follows easily from equations \eqref{e.gtdefn} and \eqref{e.fgt}.
\end{Rem}

If $\{ x_1, \dots, x_n \}$ is a basis of $\fh$,
$\fg_t$ has basis given by 
  \begin{equation} \label{e.basisgt}
  \{ (x_i, x_i) \}_{i =1, \ldots, n} \cup \{  E_{\ga} (t), \  E_{-\ga}(t) \}_{\ga \in R^+}.
  \end{equation}

%

\begin{Rem}\label{r.indexconvention}
Let $\dim(Z)=m$ and order the simple roots $\ga_1, \dots, \ga_n$
so that the roots $\ga_1, \dots, \ga_m \in R^+(\fu)$ and $\ga_{m+1}, \dots, \ga_n \in R^+(\fl)$.
 Then using equation \eqref{e.HintoCn}, we identify $Z \cong (\Cstar)^m$
and $\overline{Z} = \C^m := \{ (z_1, \dots, z_m, 1, \dots, 1) \in \C^n \}$.
We denote by $0$ the point $(0, \dots, 0) \in \C^m$, which is the same as
$$(\underbrace{0, \dots, 0}_{\mbox{m}}, \underbrace{1, 1, \dots, 1}_{\mbox{n-m}}) \in \C^n.$$
\end{Rem}

\begin{Rem}\label{r:fgspecial}
It follows from definitions that $\fg_1 = \fg_{\gD}$ and
$\fg_0 = \fl_{\gD} + \fr^{op}$. By Remark \ref{r.gtisom}, we deduce
that $\fg_s = s\cdot \fg{\gD}$ for $s\in H$.
\end{Rem}

\begin{Lem} \label{l.gylielevi}
For each $t\in \C^m$, $\fg_t$ is a Lie subalgebra of $\fg \times \fg$
and $\fl_{\gD} \subset \fg_t$.
\end{Lem}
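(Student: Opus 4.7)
My plan is to handle the two assertions separately: the Levi containment by a direct unwinding of definitions, and the Lie subalgebra property by a density/closure argument on the Grassmannian.

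For the containment $\fl_\gD \subset \fg_t$: by the index convention of Remark \ref{r.indexconvention}, a point $t \in \C^m$ has the form $(t_1, \dots, t_m, 1, \dots, 1) \in \C^n$, and any $\ga \in R^+(\fl)$ satisfies $k_i(\ga) = 0$ for $i \le m$; hence $t_\ga = 1$ and therefore $E_{\pm \ga}(t) = (e_{\pm \ga})_\gD$. Combined with the inclusion $\fh_\gD \subset \fg_t$, which is built into the definition \eqref{e.gtdefn}, and using $\fl = \fh + \sum_{\ga \in R^+(\fl)} (\C e_\ga + \C e_{-\ga})$, this yields $\fl_\gD \subset \fg_t$.

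For the Lie subalgebra property, I would work in the Grassmannian. The subset
$$ \cl := \{V \in \Gr(d, \fg \times \fg) : [V,V] \subset V\} $$
is Zariski closed, since $[V,V] \subset V$ is the vanishing condition of the composite morphism of vector bundles $V \otimes V \to \fg \times \fg \to (\fg \times \fg)/V$ over $\Gr(d, \fg \times \fg)$. By Lemma \ref{l.cnembedding}, $F \colon \C^n \to \Gr(d, \fg \times \fg)$ is a morphism, so $F^{-1}(\cl)$ is closed in $\C^n$. By Remark \ref{r:fgspecial}, for $s$ in the subtorus $Z \cong (\Cstar)^m \subset \C^m$ one has $\fg_s = \Ad(s, s^{-1}) \fg_\gD$, a Lie subalgebra since $\Ad(s, s^{-1})$ is a Lie algebra automorphism of $\fg \times \fg$. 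Thus $(\Cstar)^m \subset F^{-1}(\cl)$, and taking Zariski closures in $\C^m$ gives $\C^m \subset F^{-1}(\cl)$, as required.

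The only step that needs any care is articulating that $\cl$ is Zariski closed; this is standard but worth spelling out. As a fallback, a direct verification using the basis \eqref{e.basisgt} is available: the brackets with $\fh_\gD$, as well as $[E_\ga(t), E_\gb(t)]$ and $[E_{-\ga}(t), E_{-\gb}(t)]$ for $\ga,\gb \in R^+$, reduce immediately to multiples of $E_{\pm(\ga+\gb)}(t)$ via the monomial identity $t_\ga t_\gb = t_{\ga+\gb}$ (which follows from additivity of the $k_i$). The only mildly delicate case is $[E_\ga(t), E_{-\gb}(t)]$ for $\ga,\gb \in R^+$: if $\ga = \gb$ this equals $t_\ga^2 (h_\ga)_\gD \in \fh_\gD$, and if $\ga \ne \gb$ one splits according to whether $\ga - \gb$ or $\gb - \ga$ lies in $R^+$, in each case using the same monomial identity to rewrite the bracket as a scalar multiple of $E_{\pm(\ga - \gb)}(t) \in \fg_t$.
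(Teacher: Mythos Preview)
Your argument is correct. The Lie subalgebra part is essentially the paper's proof: both use that the locus of Lie subalgebras in $\Gr(d,\fg\times\fg)$ is Zariski closed, that $F$ is a morphism, and that $\fg_s$ is a Lie subalgebra for $s$ in the dense torus, then pass to the closure. You spell out more explicitly why the locus $\cl$ is closed, which is a welcome clarification.

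The genuine difference is in the Levi containment. The paper argues by closure again: for $z\in Z$ one has $\fl_\gD = \Ad(z,z^{-1})\fl_\gD \subset \Ad(z,z^{-1})\fg_\gD = \fg_z$, and then uses that the set of $d$-planes containing $\fl_\gD$ is a closed subvariety of the Grassmannian, together with density of $Z$ in $\C^m$. Your approach instead verifies the containment directly from the definitions: for $\ga\in R^+(\fl)$ the coefficients $k_i(\ga)$ vanish for $i\le m$, hence $t_\ga=1$ and $E_{\pm\ga}(t)=(e_{\pm\ga})_\gD$. Your route is more elementary and makes the containment transparent for each individual $t$; the paper's route is more uniform in style (two closure arguments in parallel) and avoids unpacking the root-theoretic index convention. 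Either is entirely adequate here.
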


\begin{proof}
By Remark \ref{r:fgspecial}, , $\fg_{s} = \Ad(s,s^{-1})\fg_1$ for $s\in H$.
Hence, $\fg_{s}$ is the image of the Lie subalgebra $\fg_1 = \fg_{\gD}$ under a Lie algebra
automorphism and hence is a Lie
subalgebra. It follows that $\fg_t$ is a Lie subalgebra for all
$t\in \C^n$ since the set of Lie subalgebras of $\Gr(d,\fg \times \fg)$
is a closed subvariety. For $z\in Z$, $\fl_{\gD} = \Ad(z,z^{-1})\fl_{\gD}
\subset \Ad(z,z^{-1})\fg_{\gD}$. The second claim now follows since
$Z$ is dense in $\C^m$ and
the variety of planes in  $\Gr(d, \fg \times \fg)$ containing $\fl_{\gD}$ is a closed subvariety.
\end{proof}

The elements $\{ E_{\ga}(t), E_{-\ga}(t) \}$ for
$\ga \in R^+(\fu)$ give a basis of $\fg_t/\fl_{\gD}$ via
projection. Let $\{ \phi_{\ga}(t), \phi_{-\ga}(t) \}$ for
$\ga \in R^+(\fu)$ be the dual basis.


\begin{Def}
Let $\cg$ be the universal vector bundle on $\Gr(d, \fg \times \fg)$,
so that for a point $x\in \Gr(d, \fg \times \fg)$ corresponding to
a $d$-dimensional subspace $U$, the fiber ${\cg}_x = U$. 
 Let $\flgDm$ denote the trivial
vector bundle $F(\C^m) \times \fl_{\gD}$.
By Lemma \ref{l.gylielevi}, $\flgDm$ is a subbundle of the pullback
bundle $F^*\cg$,
and we let $E = (F^*\cg/\flgDm)^*$ denote the dual of the quotient
bundle over $\C^m$.
\end{Def}

For $t\in \C^m$, consider the morphism $f_t:\fr \to (\fg_t/\fl_{\gD})^*$
defined by 
$$
 \langle f_t(X), M + \fl_{\gD} \rangle = (X, M),
 $$
for $X \in \fr$, $M \in \fg_t$,

For $s\in Z$, let 
\begin{equation} \label{e.gammas}
\Gamma_s:\fr \to \fr
\end{equation}
 denote the restriction 
to $\fr$ of the automorphism $(s, s^{-1})$
 of $\fg \times \fg$.  Then $\gG_s$ satisfies the
following equations (for $\ga>0$):
$$
\gG_s(e_{\ga}') = e^{\ga}(s) e_{\ga}'
$$
and
$$
\gG_s(e_{-\ga}'') = e^{\ga}(s) e_{-\ga}''.
$$

Since $(s,s^{-1}):\fg_t \to \fg_{s t}$ for $t\in \C^m$,
the dual $(s,s^{-1})^*$ maps $\fg_{s t}^* \to \fg_t^*$.

\begin{Lem} \label{l.fgamma}  
$$
(s,s^{-1})^* \circ f_{s t} = f_t \circ \gG_{s^{-1}}.
$$
\end{Lem}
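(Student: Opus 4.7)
The plan is to unpack both sides of the claimed equation as linear functionals on the quotient $\fg_t/\fl_{\gD}$ and reduce the identity to the $\Ad$-invariance of the Killing form under the automorphism $(s,s^{-1})$ of $\fg \times \fg$.

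First I would justify that the maps make sense. Since $s \in Z \subset L$, the element $(s,s^{-1})$ fixes $\fl_{\gD}$ pointwise (because $\Ad s$ fixes $\fl$ pointwise) and, by Remark \ref{r.gtisom}, carries $\fg_t$ to $\fg_{st}$. Hence $(s,s^{-1})$ induces a linear isomorphism $\fg_t/\fl_{\gD} \to \fg_{st}/\fl_{\gD}$ whose transpose is $(s,s^{-1})^*: (\fg_{st}/\fl_{\gD})^* \to (\fg_t/\fl_{\gD})^*$. Also, $f_t$ itself is well defined on the quotient because the Killing form pairs $\fl$ trivially with both $\fu$ and $\fu_-$, so $(X, M) = 0$ for $X \in \fr$ and $M \in \fl_{\gD}$. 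Both sides of the claimed equation therefore are maps $\fr \to (\fg_t/\fl_{\gD})^*$.

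Next I would evaluate each side on an arbitrary $X \in \fr$ and $M \in \fg_t$. For the left-hand side,
\[
\langle (s,s^{-1})^* f_{st}(X),\, M + \fl_{\gD} \rangle
= \langle f_{st}(X),\, (s,s^{-1})M + \fl_{\gD} \rangle
= (X,\, (s,s^{-1})M),
\]
using the definition of $f_{st}$ and the fact that $(s,s^{-1})M \in \fg_{st}$. For the right-hand side, since $\gG_{s^{-1}}$ is by definition the restriction of $(s^{-1},s)$ to $\fr$,
\[
\langle f_t(\gG_{s^{-1}}(X)),\, M + \fl_{\gD} \rangle = (\gG_{s^{-1}}(X),\, M) = ((s^{-1},s)X,\, M).
\]

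Finally, the two expressions agree by the invariance of the Killing form on $\fg \times \fg$ under the automorphism $(s,s^{-1})$, which gives $((s^{-1},s)X, M) = (X, (s,s^{-1})M)$. This completes the identification $(s,s^{-1})^* \circ f_{st} = f_t \circ \gG_{s^{-1}}$. There is no real obstacle here; the only delicate point is the bookkeeping to make sure the maps descend to and from the quotient by $\fl_{\gD}$, which hinges on $s$ lying in the center $Z$ of $L$ so that $\fl_{\gD}$ is fixed by $(s,s^{-1})$.
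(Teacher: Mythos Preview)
Your proof is correct and is essentially the same as the paper's: both evaluate the two sides on arbitrary $X\in\fr$ and $M\in\fg_t$ and reduce the identity to the $\Ad$-invariance of the Killing form under $(s,s^{-1})$. You add a paragraph checking that the maps descend to the quotient by $\fl_{\gD}$ (using $s\in Z$), which the paper leaves implicit, but the core computation is identical.
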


\begin{proof}
By definition, if $x \in \fr$, $u \in \fg_t$, then
\begin{align*}
\langle (s, s^{-1})^{*} \circ f_{s t} (x), u \rangle   = 
& \langle f_{s t}(x), (s, s^{-1})u \rangle 
  = (x, (s, s^{-1})u)  \\
  =  & ((s^{-1}, s)x, u) = \langle f_t \circ \gG_{s^{-1}}( x), u \rangle.
\end{align*}
\end{proof}

For $X\in \fr$, let $f_X$ denote
the section of the vector bundle $E$ over $\C^m$ such that
$f_X(t)=f_t(X)$. By the following result, $f_X$ is nowhere
vanishing if $X\not= 0$.

\begin{Prop} \label{p.iso2}
 For each $t \in \C^m$, the map $f_t$ is an isomorphism. In particular,
$E$ is isomorphic to the trivial bundle $\C^m \times \fr$ over $\C^m$.
 \end{Prop}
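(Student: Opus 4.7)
The plan is to compute $f_t$ explicitly in well-chosen bases, observe that its matrix is invertible with entries independent of $t$, and deduce the bundle trivialization from the fiberwise result.  First I would verify that $f_t$ is well-defined, i.e., that $f_t(X)$ vanishes on $\fl_{\gD}$ for every $X \in \fr$.  For $X = (e_\ga, 0)$ with $\ga \in R^+(\fu)$ and any $L \in \fl$, $(X, L_{\gD}) = (e_\ga, L) = 0$, since $e_\ga$ is Killing-orthogonal to $\fh$ and to every root space $\fg_{\pm \gb}$ with $\gb \in R^+(\fl)$ (as $\pm\ga \not\in R^+(\fl)$ when $\ga \in R^+(\fu)$).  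The same argument handles $X = (0, e_{-\ga})$.  Since $\dim \fr = 2|R^+(\fu)| = \dim \fg - \dim \fl = \dim(\fg_t/\fl_{\gD})$, it suffices to prove injectivity.

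For this, fix the basis $\{e_\ga', e_{-\ga}''\}_{\ga \in R^+(\fu)}$ of $\fr$ and pair against the lifts $\{E_\ga(t), E_{-\ga}(t)\}_{\ga \in R^+(\fu)}$ of the basis of $\fg_t/\fl_{\gD}$.  Using Killing-form orthogonality, $(\fg_\mu, \fg_\nu) = 0$ unless $\mu+\nu=0$ (in particular $\fu$ and $\fu_-$ are each isotropic), one computes
\begin{align*}
\langle f_t(e_\ga'), E_\gb(t) \rangle &= (e_\ga, t_\gb^2 e_\gb) = 0, \\
\langle f_t(e_\ga'), E_{-\gb}(t) \rangle &= (e_\ga, e_{-\gb}) = \gd_{\ga\gb}\,(e_\ga, e_{-\ga}), \\
\langle f_t(e_{-\ga}''), E_\gb(t) \rangle &= (e_{-\ga}, e_\gb) = \gd_{\ga\gb}\,(e_{-\ga}, e_\ga), \\
\langle f_t(e_{-\ga}''), E_{-\gb}(t) \rangle &= (e_{-\ga}, t_\gb^2 e_{-\gb}) = 0.
\end{align*}
Thus $f_t$ is block-diagonal with $2\times 2$ antidiagonal blocks whose nonzero entries are the constants $(e_\ga, e_{-\ga})$; these blocks are invertible and, crucially, independent of $t$, so $f_t$ is an isomorphism for every $t \in \C^m$.

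For the bundle statement, pick any basis $X_1, \dots, X_N$ of $\fr$; the resulting sections $f_{X_1}, \dots, f_{X_N}$ of $E$ form, by the previous paragraph, a fiberwise basis of $E$ over $\C^m$, and so give an algebraic vector bundle isomorphism $\C^m \times \fr \cong E$.  The only real work is the Killing-form bookkeeping that pins down the matrix of $f_t$; once that is in place the rest is formal, and I do not foresee any serious obstacle.
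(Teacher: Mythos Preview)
Your proof is correct and follows essentially the same approach as the paper: both compute $f_t$ on the basis $\{e_\ga', e_{-\ga}''\}$ of $\fr$ via Killing-form pairings against $E_{\pm\gb}(t)$, obtaining $f_t(e_\ga') = c_\ga \phi_{-\ga}(t)$ and $f_t(e_{-\ga}'') = c_\ga \phi_\ga(t)$ with $c_\ga = (e_\ga, e_{-\ga}) \neq 0$, and then deduce the bundle trivialization from the fiberwise isomorphism. Your additional verification that $f_t$ is well-defined on the quotient is a nice touch that the paper leaves implicit.
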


\begin{proof}
For $\ga \in R^+(\fu)$, let $c_{\ga} = (e_{\ga}, e_{-\ga})$.
It is straightforward to check that for $t\in \C^m$,
$f_t(e_{\ga}') = c_{\ga} \phi_{-\ga}(t)$ and
$f_t(e_{-\ga}'') = c_{\ga} \phi_{\ga}(t)$.  Since
the elements $\{ e_{\ga}', e_{-\ga}'' \}, \ga \in R^+(\fu)$
and $\phi_{\pm\ga}(t), \ga \in R^+(\fu)$ are bases of
$\fr$ and $(\fg_t/ \fl_{\gD})^*$ respectively, the first assertion follows. The
second assertion follows since the morphism $(t, X)\mapsto (t,f_t(X))$
is an isomorphism of vector bundles.
\end{proof}

For a subset $J$ of $\{ 1, \ldots, m \}$,
 let $p_J \in \C^m$ 
denote the point whose coordinates $t_j$ are equal to $1$ for
 $j \in J$, and $0$ if $j\not\in J$.  In particular, $1 = p_{ \{ 1,\dots, m \} }$
and $0 = p_{  \emptyset  }$. 
 The $p_J$ form a set of representatives of $Z$-orbits in $\C^m$.  
 We now describe $\fg_{p_J}$ for arbitrary $J$.

 \begin{Prop} \label{p.iso3}
 For $J \subset \{ 1, \ldots, m \}$,  let $\fl_{I\cup J}$ denote the Levi subalgebra
 of $\fg$ spanned by $\fh$ and $e_{\ga}$, for $\ga$ in the root system
 generated by $\ga_j \ (j \in I \cup J)$.  Let $\fu_{I \cup J,+}$ 
 (resp.~$\fu_{I\cup J,-}$) denote the
 span of the positive (resp.~negative) root spaces not in $\fl_{I\cup J}$.
 Then
 $$
 \fg_{p_J} = \fl_{I\cup J,\gD} \oplus \fu_{I\cup J,-}' \oplus \fu_{I\cup J,+}''.
 $$
 \end{Prop}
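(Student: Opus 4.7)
The plan is to verify the decomposition by direct computation, evaluating the defining basis \eqref{e.basisgt} of $\fg_t$ at the specific point $t = p_J$ and collecting terms according to their type. Under the convention of Remark \ref{r.indexconvention}, the point $p_J \in \C^m \subset \C^n$ has coordinates $t_i = 1$ for $i \in J \cup I$ (where $I = \{m+1,\dots,n\}$ indexes the simple roots of $\fl$) and $t_i = 0$ for $i \in \{1,\dots,m\} \setminus J$. Hence for a positive root $\ga = \sum_{i=1}^n k_i(\ga)\ga_i$, the monomial $t_\ga = \prod_i t_i^{k_i(\ga)}$ evaluates to $1$ exactly when $k_i(\ga) = 0$ for all $i \notin I \cup J$, i.e.\ when $\ga$ lies in the root system of $\fl_{I \cup J}$, and evaluates to $0$ otherwise.

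First I would substitute this dichotomy into the formulas $E_\ga(t) = (t_\ga^2 e_\ga, e_\ga)$ and $E_{-\ga}(t) = (e_{-\ga}, t_\ga^2 e_{-\ga})$. For $\ga \in R^+(\fl_{I \cup J})$, one gets $E_{\pm\ga}(p_J) = (e_{\pm\ga})_\gD$, which together with $\fh_\gD$ span $\fl_{I\cup J,\gD}$. For $\ga \in R^+ \setminus R^+(\fl_{I \cup J}) = R^+(\fu_{I\cup J,+})$, one gets $E_\ga(p_J) = e_\ga''$ and $E_{-\ga}(p_J) = (e_{-\ga})'$; as $\ga$ ranges over these roots, the former span $\fu_{I\cup J,+}''$ and the latter span $\fu_{I\cup J,-}'$.

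Next I would observe that the three asserted summands $\fl_{I\cup J,\gD}$, $\fu_{I\cup J,-}'$ and $\fu_{I\cup J,+}''$ have trivial pairwise intersection (they live in distinct bigraded pieces of $\fg \times \fg$ with respect to the decomposition of $\fg$ into $\fl_{I\cup J} \oplus \fu_{I\cup J,+} \oplus \fu_{I\cup J,-}$ on each factor), so the sum is direct and visibly equals the span of the basis vectors computed above. This verifies the equality $\fg_{p_J} = \fl_{I\cup J,\gD} \oplus \fu_{I\cup J,-}' \oplus \fu_{I\cup J,+}''$.

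There is no real obstacle here; the only thing to be careful about is the index bookkeeping, specifically that $t_\ga$ evaluated at $p_J$ is determined purely by whether the support of $\ga$ (in terms of simple roots) lies inside $I \cup J$ or not, and that $R^+(\fu_{I \cup J,+})$ is precisely the complement of $R^+(\fl_{I\cup J})$ in $R^+$.
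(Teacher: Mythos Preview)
Your proposal is correct and is precisely the straightforward calculation the paper alludes to; the paper's own proof consists of the single sentence ``This is a straightforward calculation,'' and your argument spells out exactly that computation by evaluating the basis vectors $E_{\pm\ga}(t)$ at $t=p_J$ and sorting them according to whether $\ga$ lies in $R^+(\fl_{I\cup J})$.
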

 
 \begin{proof}
 This is a straightforward calculation.
 \end{proof}

\section{The relative Lie algebra cohomology bundle} \label{s.rellie}

In this section, for $t\in \C^m$, we study the family of relative Lie algebra
cohomology spaces $H^*(\fg_t, \fl_{\gD})$ and show that the
dimension is independent of $t$.

The diagonal action of $L$ on $\fg \times \fg$ preserves
$\fr$ and each $\fg_t$, and the map $f_t:  \fr \to (\fg_t/\fl_{\gD})^*$
is $L$-equivariant.  We consider the cochain complex
$$
(C^{\cdot} (\fg_t, \fl_{\gD}) , d_{\fg_t})
$$
where $C^i(\fg_t, \fl_{\gD}) =  (\bigwedge^i ((\fg_t/\fl_{\gD})^*))^L$,
and the differential is the relative Lie algebra cohomology
differential corresponding to the
trivial representation of $\fg_t$ \cite[I.1.1]{BoWa:00}.  We omit the trivial
representation from the notation and denote the cohomology
of this complex
by $H^*(\fg_t, \fl_{\gD})$.   Observe that $\fg_1 = \fg_{\gD}$,
 so $H^*(\fg_1, \fl_{\gD})$ is canonically identified with
$H^*(\fg, \fl)$.  Also, $\fg_0 = \fr^{op} \oplus \fl$ by Remark \ref{r:fgspecial}. 
 This is not a direct sum
of Lie algebras, but $\fr^{op}$ is an ideal in $\fg_0$, 
and $H^*(\fg_0, \fl_{\gD})$ is identified
with $H^*(\fr^{op})^L \simeq H^*(\fu \oplus \fu_-)^L$.

Define $C^i(\fr) =  (\bigwedge^i \fr)^L$.
The $L$-equivariant isomorphism $f_t : \fr \to (\fg_t/\fl_{\gD})^*$ induces 
 isomorphisms
(also denoted $f_t$) 
$$f_t:\wedge^i (\fu_-'' \oplus \fu') \to \wedge^i(\fg_t/\fl_{\gD})^*,$$
$$f_t:C^i(\fr) \to C^i(\fg_t, \fl_{\gD}).$$
Let $d_t$ be the differential on $C^{\cdot}(\fr) := \oplus C^i(\fr)$ defined by
\begin{equation} \label{e.psit}
d_t = f_t^{-1} \circ d_{\fg_t} \circ f_t.
\end{equation}
Then 
\begin{equation}\label{e.identifycomplexes}
H^i(C^{\cdot}(\fr), d_t) \cong H^i(\fg_t, \fl_{\gD})
\end{equation}
for all $i$ and $t$.
In particular,
$H^*(C^{\cdot}(\fr), d_t)$ is isomorphic to
$H^*(\fg, \fl)$ for $t = 1$ and to
$H^*(\fr^{op})^L \simeq H^*(\fu \oplus \fu_-)^L$ for $t = 0$.

The space $\bigwedge^i \fr$ is equipped with a 
differential $\pa: \bigwedge^i \fr \to \bigwedge^{i-1} \fr$
corresponding to the Lie algebra homology of $\fr$ \cite[I.2.5]{BoWa:00}.
The map $\pa$ is $L$-equivariant, so it induces a 
map $\pa: C^i(\fr) \to C^{i-1}(\fr)$.   Then
$H_i(C^{\cdot}(\fr), \pa) = H_i(\fr)^L \simeq H_i(\fu \oplus \fu_-)^L$.

For $s\in Z$, recall the automorphism $\gG_s: \fr \to \fr$
from equation \eqref{e.gammas}. Extend $\gG_s$ to an automorphism
of $\bigwedge \fr$
so that
$$
\gG_s (X_1 \wedge \cdots \wedge X_k) = \gG_s (X_1) \wedge \cdots \wedge \gG_s( X_k) .
$$

\begin{Rem}\label{r.gammasaction} The operator $\gG_s$ preserves the subspace $C^{\cdot}(\fr)$ of
$\bigwedge \fr$.
Since $\gG_s$ is a Lie algebra automorphism,
it commutes with $\pa$.
\end{Rem}

For $s\in Z$,
the Lie algebra isomorphism
$(s, s^{-1}): \fg_t \to \fg_{s t}$ preserves
$\fl_{\gD}$.  Hence the dual 
$$
(s,s^{-1})^*: C^{\cdot}(\fg_{s t}, \fl_{\gD}) \to C^{\cdot}(\fg_{t}, \fl_{\gD})
$$
is a map of cochain complexes, i.e., 
$(s,s^{-1})^* \circ d_{\fg_{s t}} = d_{\fg_t} \circ (s,s^{-1})^*$.
In other words,
\begin{equation} \label{e.differentials}
d_{\fg_{s t}} = (s^{-1}, s)^* \circ d_{\fg_t} \circ (s, s^{-1})^*.
\end{equation}
Further, by functoriality of the exterior algebra and Lemma \ref{l.fgamma},

\begin{Lem} \label{l.fgammacomplex}
$$
(s,s^{-1})^* \circ f_{s t} = f_t \circ \gG_{s^{-1}}.
$$
\end{Lem}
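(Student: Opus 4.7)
The plan is to deduce Lemma \ref{l.fgammacomplex} directly from Lemma \ref{l.fgamma} by invoking the functoriality of the exterior algebra construction. The identity appearing in Lemma \ref{l.fgamma} is an equality of linear maps $\fr \to (\fg_t/\fl_{\gD})^*$, and the identity stated in Lemma \ref{l.fgammacomplex} is merely the extension of that equality to exterior powers, restricted afterwards to $L$-invariants.

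First I would unpack the notation. In the statement, $f_t$ stands for the exterior-algebra extension of the linear isomorphism from Proposition \ref{p.iso2}; $\gG_{s^{-1}}$ is extended to $\bigwedge \fr$ as specified just before Remark \ref{r.gammasaction}; and $(s,s^{-1})^*$ is the pullback on $\bigwedge^i$ of the dual of the linear map $(s,s^{-1}): \fg_t/\fl_{\gD} \to \fg_{st}/\fl_{\gD}$ (well-defined because $(s,s^{-1})$ preserves $\fl_{\gD}$).

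Next I would verify the identity on all of $\bigwedge^i \fr$ using the general principle that $(\phi \circ \psi)^{\wedge i} = \phi^{\wedge i} \circ \psi^{\wedge i}$ for any composable pair of linear maps. Evaluating both sides on a decomposable wedge $X_1 \wedge \cdots \wedge X_i$, the problem reduces factor-by-factor to the linear identity
$$((s,s^{-1})^* \circ f_{s t})(X_j) = (f_t \circ \gG_{s^{-1}})(X_j),$$
which is precisely Lemma \ref{l.fgamma}. Since decomposable wedges span $\bigwedge^i \fr$, the identity holds there.

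Finally I would pass to $L$-invariants. Each of the three maps is $L$-equivariant: $f_t$ by the remark preceding equation \eqref{e.psit}; $\gG_{s^{-1}}$ because $Z$ lies in the center of $L$ and so commutes with the diagonal $L$-action on $\fr$; and $(s,s^{-1})^*$ because the diagonal $L$-action on $\fg\times\fg$ commutes with $(s,s^{-1})$. Hence both sides of the identity restrict to maps $C^{\cdot}(\fr) \to C^{\cdot}(\fg_t, \fl_{\gD})$, completing the proof. The main obstacle is essentially bookkeeping — confirming that the three maps extend unambiguously to the exterior algebra and preserve $L$-invariants — and no genuine calculation beyond Lemma \ref{l.fgamma} is required.
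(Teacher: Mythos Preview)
Your proposal is correct and follows exactly the approach the paper takes: the paper simply asserts that the lemma holds ``by functoriality of the exterior algebra and Lemma \ref{l.fgamma},'' which is precisely the argument you have spelled out in detail. Your additional remarks about $L$-equivariance and the restriction to invariants are correct elaborations of what the paper leaves implicit.
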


\begin{Prop} \label{p.differentials}
Let $s \in Z$ and $t \in \C^m$.  Then
\begin{equation} \label{e.differentialsprop}
d_{s t} = \gG_{s} \circ d_t \circ \gG_{s^{-1}}.
\end{equation}
\end{Prop}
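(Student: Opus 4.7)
The plan is to derive the identity by directly substituting the definition of $d_t$ from \eqref{e.psit} together with the conjugation relations supplied by Lemma \ref{l.fgammacomplex} and equation \eqref{e.differentials}, so that all occurrences of $(s,s^{-1})^*$ cancel against their inverses $(s^{-1},s)^*$, leaving only $\Gamma_s$, $d_t$, and $\Gamma_{s^{-1}}$.

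Concretely, I would first invert Lemma \ref{l.fgammacomplex} to express $f_{st}$ and $f_{st}^{-1}$ in terms of $f_t$, $f_t^{-1}$, and the operators $\Gamma_{s^{\pm 1}}$, $(s^{\pm 1},s^{\mp 1})^*$. Since $(s,s^{-1})$ is an automorphism of $\fg \times \fg$ with inverse $(s^{-1},s)$, the pullbacks $(s,s^{-1})^*$ and $(s^{-1},s)^*$ are mutually inverse. Hence Lemma \ref{l.fgammacomplex} gives
\begin{equation*}
f_{st} = (s^{-1},s)^* \circ f_t \circ \Gamma_{s^{-1}},
\qquad
f_{st}^{-1} = \Gamma_{s} \circ f_t^{-1} \circ (s,s^{-1})^*.
\end{equation*}

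Next I would plug these two formulas, together with the identity $d_{\fg_{st}} = (s^{-1},s)^* \circ d_{\fg_t} \circ (s,s^{-1})^*$ from \eqref{e.differentials}, into the defining relation $d_{st} = f_{st}^{-1} \circ d_{\fg_{st}} \circ f_{st}$ coming from \eqref{e.psit}. The two central factors $(s,s^{-1})^* \circ (s^{-1},s)^*$ that appear on each side of $d_{\fg_t}$ collapse to the identity, and the remaining composition is exactly $\Gamma_s \circ f_t^{-1} \circ d_{\fg_t} \circ f_t \circ \Gamma_{s^{-1}} = \Gamma_s \circ d_t \circ \Gamma_{s^{-1}}$, which is the asserted identity \eqref{e.differentialsprop}.

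There is no substantive obstacle here beyond bookkeeping; the whole proposition is a formal consequence of the previous lemma and of \eqref{e.differentials}. The only thing one needs to be careful about is keeping the order of composition and the inversions straight, in particular that the dual/pullback of $(s,s^{-1})$ reverses order and inverts correctly to $(s^{-1},s)^*$. Once that is handled, the verification reduces to a single line of cancellation.
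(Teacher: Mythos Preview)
Your proposal is correct and follows essentially the same approach as the paper: start from the definition $d_{st} = f_{st}^{-1} \circ d_{\fg_{st}} \circ f_{st}$, substitute \eqref{e.differentials} for $d_{\fg_{st}}$, and use Lemma \ref{l.fgammacomplex} to convert the $f_{st}^{\pm 1}$ and $(s^{\pm 1},s^{\mp 1})^*$ factors into $\Gamma_{s^{\pm 1}}$ and $f_t^{\pm 1}$. The only cosmetic difference is that you first solve Lemma \ref{l.fgammacomplex} for $f_{st}$ and $f_{st}^{-1}$ explicitly and then substitute, whereas the paper substitutes \eqref{e.differentials} first and then applies the lemma directly to the resulting composites; the cancellations are identical.
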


\begin{proof}
By definition,
$$
d_{s t} = f_{s t}^{-1} \circ d_{\fg_{s t}} \circ f_{s t}.
$$
By \eqref{e.differentials}, this equals
$$
f_{s t}^{-1} \circ  (s^{-1}, s)^* \circ d_{\fg_t} \circ (s, s^{-1})^*   \circ f_{s t}.
$$
By Lemma \ref{l.fgammacomplex}, this equals
$$
\gG_s \circ f_t^{-1} \circ  d_{\fg_t} \circ f_t \circ \gG_{s^{-1}}.
$$
By definition, $d_t = f_t^{-1} \circ  d_{\fg_t} \circ f_t$.  The result
follows.
\end{proof}

The following corollary will be used in the next section.

\begin{Cor} \label{c.cohisom}
For $s \in Z$ and $t \in \C^m$, the automorphism $\gG_s$
of $C^{\cdot}(\fr)$ induces a map of cochain complexes
$(C^{\cdot}(\fr), d_t)
\to (C^{\cdot}(\fr), d_{s t})$ .  Hence
$\gG_s$ induces an isomorphism in cohomology:
$$
\gG_s: H^{*}(C^{\cdot}(\fr), d_t) \stackrel{\simeq}{\rightarrow}
H^{*}(C^{\cdot}(\fr), d_{s t}).
$$
\end{Cor}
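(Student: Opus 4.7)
The proof is essentially a formal consequence of Proposition \ref{p.differentials} together with Remark \ref{r.gammasaction}. First, I would observe that since $\gG_s$ is the restriction of the Lie algebra automorphism $(s, s^{-1})$ of $\fg \times \fg$ to $\fr$, and it commutes with the diagonal $L$-action (because $Z$ is central in $L$), the induced map on $\bigwedge^{\cdot}\fr$ preserves the $L$-invariants, hence restricts to an automorphism of $C^{\cdot}(\fr)$, as noted in Remark \ref{r.gammasaction}.

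Next, I would rewrite the identity from Proposition \ref{p.differentials}, $d_{st} = \gG_s \circ d_t \circ \gG_{s^{-1}}$, in the form
\begin{equation*}
d_{st} \circ \gG_s = \gG_s \circ d_t,
\end{equation*}
which is precisely the condition that $\gG_s: (C^{\cdot}(\fr), d_t) \to (C^{\cdot}(\fr), d_{st})$ is a map of cochain complexes. Since $\gG_s$ has inverse $\gG_{s^{-1}}$ (which is likewise a chain map by the same argument applied to $s^{-1}$), it is an isomorphism of cochain complexes, and hence induces an isomorphism in cohomology. This gives the stated isomorphism
\begin{equation*}
\gG_s: H^{*}(C^{\cdot}(\fr), d_t) \stackrel{\simeq}{\rightarrow} H^{*}(C^{\cdot}(\fr), d_{st}).
\end{equation*}

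There is no real obstacle here: the entire content is packaged in Proposition \ref{p.differentials}, and the corollary is a formal extraction. The only point to check carefully is that $\gG_s$ indeed preserves the $L$-invariant subcomplex $C^{\cdot}(\fr) \subset \bigwedge^{\cdot} \fr$, which follows from the centrality of $Z$ in $L$ so that $\gG_s$ commutes with the diagonal $\Ad L$-action on $\fr$.
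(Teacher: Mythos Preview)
Your proof is correct and follows essentially the same approach as the paper: rewrite Proposition \ref{p.differentials} as $d_{st}\circ\gG_s=\gG_s\circ d_t$ to see that $\gG_s$ is a chain map, and then conclude it induces an isomorphism in cohomology because $\gG_s$ is a vector space automorphism. The only difference is that you spell out the inverse $\gG_{s^{-1}}$ explicitly, whereas the paper simply appeals to $\gG_s$ being a vector space isomorphism.
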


\begin{proof}
We can rewrite \eqref{e.differentialsprop} as
$$
 d_{s t} \circ \gG_s = \gG_s \circ d_t,
 $$
proving the first statement.  The second statement follows from
the first since
$\gG_s$ is a vector space isomorphism.
\end{proof}

We recall a result of Kostant \cite[Theorem 5.14]{Kos:61}.

\begin{Thm} \label{t.kostant61}

\noindent (1) $H^i(\fu \oplus \fu_-)^L = 0$ for $i$ odd.

\noindent (2) $\dim H^i(\fu \oplus \fu_-)^L = |\{ w\in W^P : 2 l(w) = i \}|$
if $i$ is even.
\end{Thm}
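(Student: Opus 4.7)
The plan is to deduce this from Kostant's classical decomposition of $H^*(\fu)$ as an $\fl$-module, combined with the K\"unneth formula and Schur's lemma.

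First, since the diagonal $L$-action respects the decomposition of $\fu \oplus \fu_-$ into a direct sum of Lie algebras, K\"unneth gives an $L$-equivariant isomorphism
$$
H^k(\fu \oplus \fu_-) \;\cong\; \bigoplus_{i+j=k} H^i(\fu) \otimes H^j(\fu_-).
$$
Next, Kostant's theorem on $H^*(\fu)$ produces the $\fl$-module decomposition $H^i(\fu) = \bigoplus_{w \in W^P,\, l(w) = i} V_w$, where $V_w$ is the irreducible $\fl$-module of highest weight $w\rho - \rho$ and the $V_w$ are pairwise non-isomorphic (since $\rho$ is $W$-regular). Applying the same theorem to the opposite parabolic $P^{\mathrm{op}}$, whose Levi is again $L$ and whose nilradical is $\fu_-$, and tracking highest weights through the natural identification of its minimal coset representatives with $W^P$, one obtains $H^j(\fu_-) = \bigoplus_{w \in W^P,\, l(w) = j} V_w^*$ as $\fl$-modules.

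Substituting these decompositions into the K\"unneth formula and taking $L$-invariants, Schur's lemma yields $(V_u \otimes V_w^*)^L \cong \Hom_L(V_w, V_u) \cong \C \cdot \delta_{u,w}$, so only terms with $i = j$ and $u = w$ survive. Consequently $H^k(\fu \oplus \fu_-)^L = 0$ for $k$ odd, and for $k = 2i$ its dimension is $|\{w \in W^P : l(w) = i\}|$, proving both parts of the theorem.

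The main obstacle is the $\fl$-equivariant identification of the Kostant summand of $H^j(\fu_-)$ labeled by $w$ as the dual module $V_w^*$. Since $\fu$ and $\fu_-$ carry opposite weight structures under $L$, careful bookkeeping of highest weights (using the longest element $w_{0,P}$ of $W_P$) is required to verify this; alternatively, one may invoke the Chevalley involution of $\fg$, which interchanges $\fu$ and $\fu_-$, stabilizes $\fl$, and acts on each $\fl$-irreducible as its dual representation.
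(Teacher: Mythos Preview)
The paper does not give its own proof of this theorem; it simply records it as a citation to Kostant's original paper \cite[Theorem 5.14]{Kos:61}. Your argument is correct and is precisely the natural route by which the statement follows from Kostant's more fundamental decomposition of $H^*(\fu)$ as an $\fl$-module, combined with K\"unneth and Schur. In fact the paper later uses exactly the duality you identify: in Remark~\ref{t.kostexplain} it observes that the $L$-irreducible $M_w \subset \bigwedge^{l(w)}(\fu_-)$ with highest weight $w^{-1}\rho - \rho$ is dual to the irreducible $N_w \subset \bigwedge^{l(w)}(\fu)$ with lowest weight $\rho - w^{-1}\rho$, so that $(M_w \otimes N_w)^L$ is one-dimensional. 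This confirms that your ``main obstacle'' is resolvable along the lines you suggest, and that the pairwise non-isomorphism of the $V_w$ (from the $W$-regularity of $\rho$) forces only the diagonal terms $i=j$, $u=w$ to survive.
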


Choose a maximal compact subgroup  $K \subset G$ so that $K$ is the fixed subgroup
of a Cartan involution $\theta$ of $G$ which stabilizes $H$, $Z$, and $L$. 
Let $K_L = K\cap L$. Then $K/K_L = G/P$ and $H^*(\fk, \fk_L) \cong H^*(\fg, \fl)$
since we are computing relative Lie algebra cohomology with complex coefficients. It
follows that
$$
H^*(G/P) \cong H^*(K/K_L) \cong H^*(\fk, \fk_L) \cong H^*(\fg, \fl),
$$
using standard results on the cohomology of compact homogeneous spaces.
Thus, by Theorem \ref{t.kostant61} and the description of $H^*(G/P)$ in
Section \ref{s.schubert},
\begin{equation} \label{e.coh1iscoh0}
\dim(H^i(G/P)) = \dim(H^i(\fg_1, \fl_{\gD})) = \dim(H^i(\fg_0, \fl_{\gD}))
\end{equation}
for all $i$.

We consider the map $d: \C^m \to \End(C^{\cdot}(\fr))$ given by
$d(t)=d_t$.

\begin{Lem} \label{l.ccvb}
The map $d$ is a morphism of algebraic varieties.
\end{Lem}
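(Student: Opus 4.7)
The plan is to exhibit $d_t \in \End(C^{\cdot}(\fr))$ explicitly as a matrix whose entries are polynomials in the coordinates of $t$, relative to a fixed basis. Since $\End(C^{\cdot}(\fr))$ is a fixed finite-dimensional $\C$-vector space, this implies that $d$ is a morphism of algebraic varieties. The main obstacle is that the target space $(\fg_t/\fl_{\gD})^*$ of $f_t$ varies with $t$ and does not sit in a fixed ambient vector space in an obvious way; the resolution is to use the explicit ``diagonal'' form of $f_t$ from Proposition~\ref{p.iso2} to transport the problem to the fixed space $\wedge^{\cdot}\fr$.

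Take the $t$-independent basis $\{e_{\alpha}', e_{-\alpha}''\}_{\alpha \in R^{+}(\fu)}$ of $\fr$ and, for each $t \in \C^m$, the basis $\{\phi_{\pm\alpha}(t)\}_{\alpha \in R^{+}(\fu)}$ of $(\fg_t/\fl_{\gD})^*$ dual to $\{E_{\pm\alpha}(t)\}$. The explicit formulas $f_t(e_{\alpha}') = c_{\alpha}\phi_{-\alpha}(t)$ and $f_t(e_{-\alpha}'') = c_{\alpha}\phi_{\alpha}(t)$ recorded in the proof of Proposition~\ref{p.iso2} show that, between these bases, $f_t$ is represented by a diagonal matrix with entries $c_{\alpha}$ that do not depend on $t$; the same holds for $f_t^{-1}$. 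Passing to wedge products, the induced map $f_t: \wedge^{\cdot}\fr \to \wedge^{\cdot}(\fg_t/\fl_{\gD})^*$ is represented by the same constant diagonal matrix in the corresponding wedge-product bases.

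It therefore suffices to show that the Chevalley--Eilenberg differential $d_{\fg_t}$, written in the $\{\phi_{\pm\alpha}(t)\}$-basis, has polynomial matrix entries in $t$. By the Chevalley--Eilenberg formula, these entries are determined by the structure constants of the induced bracket $\wedge^{2}(\fg_t/\fl_{\gD}) \to \fg_t/\fl_{\gD}$ in the basis $\{E_{\pm\alpha}(t)\}$. Case analysis using $t_{\gamma + \delta} = t_{\gamma}t_{\delta}$ yields, for instance,
\begin{align*}
[E_{\alpha}(t), E_{\beta}(t)] &= N_{\alpha,\beta}\, E_{\alpha+\beta}(t) \quad \text{when } \alpha + \beta \in R^{+}(\fu), \\
[E_{\alpha}(t), E_{-\beta}(t)] &\equiv N_{\alpha,-\beta}\, t_{\beta}^{2}\, E_{\alpha-\beta}(t) \pmod{\fl_{\gD}} \quad \text{when } \alpha - \beta \in R^{+}(\fu),
\end{align*}
with analogous formulas in the remaining cases. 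When $\alpha = \beta$, or when $\alpha - \beta \in R^{+}(\fl)$, the relation $t_{\alpha} = t_{\beta}$ forces the bracket to land in $\fl_{\gD}$ and so to vanish modulo $\fl_{\gD}$. In every case the resulting structure constants are polynomials in $t$.

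Combining these observations, $d_t = f_t^{-1} \circ d_{\fg_t} \circ f_t$ is represented by a matrix with polynomial entries in a fixed basis of $\wedge^{\cdot}\fr$. Restricting to the $L$-invariant subspace $C^{\cdot}(\fr)$, which is a fixed subspace of $\wedge^{\cdot}\fr$ because the $L$-action is independent of $t$, preserves polynomiality and produces the required morphism $\C^m \to \End(C^{\cdot}(\fr))$.
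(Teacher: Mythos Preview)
Your proof is correct and follows essentially the same approach as the paper: both reduce to computing the structure constants of the bracket on $\fg_t/\fl_{\gD}$ in the basis $\{E_{\pm\alpha}(t)\}$ and observing that the resulting formulas (including the $t_{\beta}^2$ factors in the mixed-sign case) are polynomial in $t$. The paper's write-up additionally invokes the derivation property of $d_t$ to reduce explicitly to degree one, and you handle the case $\alpha-\beta\in R^+(\fl)$ a bit more carefully than the paper's terse convention, but the substance is the same.
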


\begin{proof}
It suffices to show that $d:\C^m \to \End(\bigwedge \fu_{-}'' \oplus \fu')$
is a morphism, since $C^{\cdot}(\fr)$ is a $d_t$-stable subspace
of $\bigwedge \fu_{-}'' \oplus \fu'$. Since $d_t$ is a derivation,
it suffices to show that $d_t: \fu_{-}'' \oplus \fu' \to \bigwedge^{2}
 (\fu_{-}'' \oplus \fu')$ is a morphism.  By definition of $d_t$, 
\begin{equation}\label{e.dformula}
d_t \phi_{\ga} (E_{\gb}(t), E_{\gg}(t)) = -\phi_{\ga}[E_{\gb}(t), E_{\gg}(t)],
\ \forall  \ \ga, \gb, \gg \in R(\fu \oplus \fu_-), 
\end{equation}
where $R(\fu \oplus \fu_-)=R^+(\fu) \cup -R^+(\fu)$.
Let $\gb, \gg \in R(\fu \oplus \fu^-)$ and if $\gb + \gg \in R(\fu \oplus \fu_-),$
we define $c_{\gb, \gg}$ by the rule $[e_{\gb}, e_{\gg}]= c_{\gb, \gg} e_{\gb + \gg}$.
If $\gb + \gg$ is not in $R(\fu \oplus \fu^-)$, we set $c_{\gb, \gg} E_{\gb + \gg}(t)
= 0$. 
 Then it follows from the definition
of $E_{\pm \ga}(t)$ that modulo $\fl_{\gD}$, 

\par\noindent $[E_{\gb}(t), E_{\gg}(t)] = c_{\gb, \gg} E_{\gb + \gg}(t)$ if
$\gb, \gg \in R(\fu \oplus \fu_-)$ have the same sign;
\par\noindent $[E_{\gb}(t), E_{-\gg}(t)]=t_{\gg}^2 c_{\gb, -\gg} E_{\gb - \gg}(t)$ if
$\gb, \gg, \gb - \gg \in R^+(\fu)$;
\par\noindent $[E_{\gb}(t), E_{-\gg}(t)]=t_{\gb}^2 c_{\gb, -\gg} E_{-(\gb - \gg)}(t)$
if $\gb, \gg, \gg - \gb \in R^+(\fu)$.

\par\noindent Applying these identities to equation \eqref{e.dformula} proves
the Lemma.
\end{proof}

As a consequence of Lemma \ref{l.ccvb}, 
$\cz^i := \ker(d|_{C^i(\fr)})$ and $\cb^i := \im(d|_{C^{i-1}(\fr)})$
are coherent subsheaves of the sheaf corresponding to the trivial vector bundle
$C^i(\fr)$ over $\C^m$, and standard properties of sheaf cohomology imply
the following remark.

\begin{Rem} \label{r.sheafhi}
 $\ch^i := \cz^i / \cb^i$
is a coherent sheaf and $\ch_t^i \cong H^i(\fg_t, \fl_{\gD})$.
\end{Rem}

\begin{Thm} \label{t.rkhconstant}
For $t\in \C^m$, $\dim(H^i(\fg_t, \fl_{\gD})) = \dim(H^i(G/P))$.
\end{Thm}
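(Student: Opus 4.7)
The plan is to combine upper semicontinuity of cohomology dimension (from Remark~\ref{r.sheafhi}) with the $Z$-equivariance of Corollary~\ref{c.cohisom}, anchored at $t = 0$ by Kostant's Theorem~\ref{t.kostant61}. Set $d_i := \dim H^i(G/P)$. Since $\ch^i$ is a coherent sheaf on $\C^m$ with fiber $H^i(\fg_t, \fl_{\gD})$, the function $t \mapsto \dim H^i(\fg_t, \fl_{\gD})$ is upper semicontinuous.

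First I would establish the lower bound $\dim H^i(\fg_t, \fl_{\gD}) \geq d_i$. For $t \in Z$, Remark~\ref{r:fgspecial} gives $\fg_t = \Ad(t, t^{-1})\fg_{\gD}$ via an automorphism of $\fg \times \fg$ that fixes $\fl_{\gD}$ pointwise (since $t \in Z$ centralizes $\fl$), so $H^i(\fg_t, \fl_{\gD}) \cong H^i(\fg, \fl)$, which has dimension $d_i$ by equation~\eqref{e.coh1iscoh0}. The set $V_i := \{t : \dim H^i(\fg_t, \fl_{\gD}) \geq d_i\}$ is closed by upper semicontinuity and contains the Zariski-dense subset $Z$; hence $V_i = \C^m$.

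Next I would establish the matching upper bound $\dim H^i(\fg_t, \fl_{\gD}) \leq d_i$. Let $U_i := \{t : \dim H^i(\fg_t, \fl_{\gD}) \leq d_i\}$, which is open by upper semicontinuity and, by Corollary~\ref{c.cohisom}, $Z$-stable. By Kostant's Theorem~\ref{t.kostant61} combined with equation~\eqref{e.coh1iscoh0}, $\dim H^i(\fg_0, \fl_{\gD}) = d_i$ for all $i$ (noting that $d_i = 0$ for odd $i$), so $0 \in U_i$. I would then observe that any $Z$-stable open subset of $\C^m$ containing $0$ equals all of $\C^m$: given $t = (t_1, \ldots, t_m) \in \C^m$ and an open neighborhood $N$ of $0$ inside the subset, one chooses $s = (s_1, \ldots, s_m) \in Z \cong (\C^*)^m$ with each $|s_j|$ sufficiently small so that $s \cdot t = (s_1 t_1, \ldots, s_m t_m) \in N$; the $Z$-invariance then gives $t = s^{-1} \cdot (s \cdot t)$ also in the subset. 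Thus $U_i = \C^m$, and combining with the lower bound yields $\dim H^i(\fg_t, \fl_{\gD}) = d_i$.

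I expect the key step to be the $Z$-stable-open observation above, rather than a genuine technical obstacle: its content is that $0$ lies in the closure of every $Z$-orbit on $\C^m$, which lets one propagate the exact dimensions at $t = 0$ furnished by Kostant to all of $\C^m$ via Corollary~\ref{c.cohisom}. Once this is in place, the remaining semicontinuity and density bookkeeping is routine.
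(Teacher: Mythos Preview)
Your proof is correct and follows essentially the same route as the paper's: both sandwich $\dim H^i(\fg_t,\fl_\Delta)$ between its values at $t=1$ and $t=0$ (which agree by \eqref{e.coh1iscoh0}) using $Z$-equivariance together with the fact that $0$ lies in the closure of every $Z$-orbit on $\C^m$. The only difference is packaging: the paper works directly with the closed $Z$-invariant rank loci $M_k^i=\{t:\rank d_t^i\le k\}$ to deduce that $\dim H^i$ is monotone along orbit closures, whereas you invoke upper semicontinuity of $t\mapsto\dim H^i$ as a black box and run a density argument for the lower bound and your ``$Z$-stable open containing $0$ is everything'' argument for the upper bound.

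One minor caveat: deriving upper semicontinuity from Remark~\ref{r.sheafhi} is a bit delicate, since for a complex of vector bundles the fiber of the cohomology sheaf $\ch^i$ need not coincide with $H^i$ of the fiber complex until one already knows $\ch^i$ is locally free---which is precisely what you are proving. It is cleaner to observe directly that $\dim H^i(C^\cdot(\fr),d_t)=\dim C^i(\fr)-\rank d_t^i-\rank d_t^{i-1}$ is upper semicontinuous by lower semicontinuity of rank; this is exactly the computation the paper's proof carries out.
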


\begin{proof} 
Let $d_t^i$ denote the restriction of $d_t$ to $C^i(\fr)$, and let
$$
M_k^i = \{ t \in \C^m \ | \ \rank d_t^i \le k \}.
$$
Then $M_k^i$ is closed.  Indeed, if we choose bases of $C^i(\fr)$ and
$C^{i+1}(\fr)$, then $d_t^i$ corresponds to a matrix with polynomial entries
and $M_k^i$ is defined by the vanishing of the $(k + 1) \times (k + 1)$ minors of this
matrix.  Also, $M_k^i$ is $Z$-invariant, since Proposition \ref{p.differentials}
implies that if $s \in Z$, $t \in \C^m$, then $\rank d_t^i = \rank d_{st}^i$.

Suppose that $\co_1 = Z \cdot t_1$ and $\co_2 = Z \cdot t_2$ are $Z$-orbits
on $\C^m$ with $\co_1 \subseteq \overline{\co}_2$.  Since $M_k^i$ is closed
and $Z$-invariant, if $t_2 \in M_k^i$ then $t_1 \in M_k^i$. 
It follows easily that $\rank d_{t_1}^i \le \rank d_{t_2}^i$ for all $i$. 
 Therefore,
$$
\dim (\ker(d_{t_1}^i)) \ge \dim (\ker(d_{t_2}^i)).
$$
Similarly,
$$
\dim (\im (d_{t_1}^{i-1})) \le \dim (\im (d_{t_2}^{i-1})).
$$
Hence, 
\begin{equation} \label{e.lowerbound}
\dim(H^i(C^\cdot(\fr), d_{t_2})) \le \dim(H^i(C^\cdot(\fr), d_{t_1})).
\end{equation} 

The orbit $Z \cdot 1 = \{ s : s\in Z \}$ is open and dense in
$\C^m$, and the orbit $\{ 0 \} = Z \cdot 0$ is contained in the closure
of any $Z$-orbit on $\C^m$.  Therefore, equation \eqref{e.lowerbound}
implies that for all $i$ and for all $t \in \C^m$,
$$
\dim(H^i(C^\cdot(\fr), d_1)) \le \dim(H^i(C^\cdot(\fr), d_t)) \le
\dim(H^i(C^\cdot(\fr), d_0)).
$$
Since 
$$
\dim(H^i(C^\cdot(\fr), d_1)) = \dim(H^i(C^\cdot(\fr), d_0)) = \dim H^i(G/P)
$$
by equation \eqref{e.coh1iscoh0}, we see that $\dim(H^i(C^\cdot(\fr), d_t)) =
 \dim H^i(G/P)$, as desired.
 \end{proof}

\begin{Rem}\label{r.kerdtcon}
It follows from the proof that $\dim(\ker(d_t))$ is constant for $t\in \C^m$.
\end{Rem}

We let 
\begin{equation}\label{e.totalcoh}
\chstar := \oplus_{i=0}^{2N} \ch^i
\end{equation}
denote the corresponding total cohomology
sheaf, and note that $\chstar$ is a sheaf of rings.

\begin{Cor} \label{c.csheafrank}
 The coherent sheaf $\ch^i$ on $\C^m$ is a vector bundle. The rank
of the bundle $\chstar$ is 
$|W^P|$.
\end{Cor}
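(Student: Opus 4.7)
The plan is to deduce both statements from Theorem \ref{t.rkhconstant} together with Remark \ref{r.kerdtcon}, treating the first as a standard fact about coherent sheaves with constant fiber dimension and the second as a direct Schubert-basis count.

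For the vector bundle claim, I would first note that $\cz^i = \ker(d|_{C^i(\fr)})$ is the kernel of a morphism between two trivial bundles on $\C^m$ (the morphism is algebraic by Lemma \ref{l.ccvb}). By Remark \ref{r.kerdtcon}, $\dim(\ker d_t^i)$ is constant on $\C^m$, so $\cz^i$ is a subbundle of the trivial bundle $C^i(\fr) \times \C^m$. By the rank-nullity theorem applied fiberwise, $\dim(\im d_t^{i-1})$ is likewise constant, so $\cb^i = \im(d|_{C^{i-1}(\fr)})$ is also a subbundle, and moreover $\cb^i \subset \cz^i$ is a subbundle inclusion with constant corank. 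The quotient $\ch^i = \cz^i/\cb^i$ of a subbundle inclusion of vector bundles is therefore itself a vector bundle. (Alternatively, one may invoke the standard criterion that a coherent sheaf on a reduced scheme is locally free iff its fiber dimension is constant; this follows from Theorem \ref{t.rkhconstant}, which shows that each $\dim \ch^i_t = \dim H^i(\fg_t, \fl_{\gD}) = \dim H^i(G/P)$ is independent of $t$.)

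For the rank computation, by definition \eqref{e.totalcoh}, $\chstar = \bigoplus_{i=0}^{2N} \ch^i$, so its rank at any point $t \in \C^m$ equals
\[
\sum_{i=0}^{2N} \dim H^i(\fg_t, \fl_{\gD}) = \sum_{i=0}^{2N} \dim H^i(G/P),
\]
using Theorem \ref{t.rkhconstant}. But the Schubert classes $\{[X_w] : w\in W^P\}$ form a basis of $H_*(G/P)$, as recalled in Section \ref{s.schubert}, so $\sum_i \dim H^i(G/P) = |W^P|$.

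There is no serious obstacle here: the only substantive input is Theorem \ref{t.rkhconstant} (equivalently Remark \ref{r.kerdtcon}), which has already been proved. The mild subtlety is phrasing the local freeness argument; I would state it via constant fiber dimension of a coherent sheaf on the smooth affine variety $\C^m$, rather than through the subbundle description, since this is cleaner and avoids any need to verify smoothness of the rank function for $d$ at the level of subbundles beyond what Remark \ref{r.kerdtcon} already gives.
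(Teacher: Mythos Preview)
Your proposal is correct and follows essentially the same approach as the paper: the paper invokes the standard criterion that a coherent sheaf with constant fiber dimension on a reduced scheme is locally free (citing Hartshorne, Exercise II.5.8(c)) together with Theorem \ref{t.rkhconstant}, which is exactly your ``alternative'' argument. Your additional subbundle description via Remark \ref{r.kerdtcon} is a valid elaboration but not needed.
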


\begin{proof}
The first claim follows since a coherent sheaf with constant fiber dimension is
a vector bundle (\cite[Exercise II.5.8(c)]{Har:77}). The second claim follows
from Theorem \ref{t.rkhconstant}.
\end{proof}

\section{Global Schubert classes} \label{s.globalsch}
In the previous section, we showed that for every $t\in \C^m$, the vector
spaces $H^*(G/P)$ and  $H^*(\fg_t, \fl_{\gD})$
have the same dimension.  In this section, we use that result to give an
explicit isomorphism $H^*(G/P) \to H^*(\fg_t, \fl_{\gD})$, so that
the rings $H^*(\fg_t, \fl_{\gD})$ give a family of ring structures on
$H^*(G/P)$. More precisely, we identify a basis of sections of
the bundle $\chstar$ indexed by $w\in W^P$, and show that the structure
constants in this basis give the Belkale-Kumar family of cup products
on $H^*(G/P)$.

\subsection{Global sections of the cohomology bundle} \label{s.globalcoh}
Because we work with specific bases, we must introduce certain normalizations.
The decomposition $\fr = \fu_{-}'' \oplus \fu'$ induces an
identification $C^{\cdot}(\fr)$ with $(\bigwedge \fu_{-}'' \otimes \bigwedge \fu')^L$.
 We choose our root vectors so that
$$
(e_{\ga}, e_{-\ga}) = 1,
$$
where the inner product is the Killing form on $\fg$.

\begin{Def} \label{def.ef}
Fix an enumeration of the elements of $R^+(\fu)$.  For a subset  
$B = \{ \gb_1, \ldots, \gb_k \} \subset R^+(\fu)$, relabel the $\gb_j$
so that if $i<j$, then $\gb_i$ occurs before $\gb_j$ in our enumeration,
and define

\begin{equation} \label{e.defe1}
e'(B) = e_{\gb_1}' \wedge \cdots \wedge e_{\gb_{k}}' \mbox{  and  }
e''(B) =e_{-\gb_1}'' \wedge \cdots \wedge e_{-\gb_{k}}''.
\end{equation}
Define
\begin{equation} \label{e.defeb}
e(B_1, B_2) = e''(B_1) \wedge e'(B_2).
\end{equation}
If $B_1 = B_2 = B$ we write simply $e(B)$ for $e(B,B)$.   Recall the
monomials $t_{\ga}$ from Section \ref{s.subfamily} and
define a polynomial function $F_{B_1,B_2}$ on $\C^m$ by the formula
\begin{equation} \label{e.defFB}
F_{B_1,B_2}(t) = \prod_{\ga \in B_1} t_{\ga} \prod_{\gb \in B_2} t_{\gb}.
\end{equation}
If $B_1 = B_2 = B$ we write 
$F_B$ for $F_{B_1, B_2}$.
For  $ w \in W^P$, we note that $R^+ \cap w^{-1} R^- \subset R^+(\fu)$, and we  define
\begin{equation} \label{e.defew}
e(w) = e(R^+ \cap w^{-1} R^-)
\end{equation}
and consider the regular function on $\C^m$ given by
\begin{equation} \label{e.defFw}
F_w(t) := F_{R^+ \cap w^{-1} R^-}(t) =   
\prod_{\ga \in R^+ \cap w^{-1} R^-} t_{\ga}^2.
\end{equation}
\end{Def}

Recall that for $s \in Z$ we have defined $\gG_s: C^{\cdot}(\fr) \to
 C^{\cdot}(\fr)$.
\begin{Lem} \label{l.gammafb}
$\gG_s (e(B_1,B_2)) = F_{B_1,B_2}(s) e(B_1,B_2)$.
\end{Lem}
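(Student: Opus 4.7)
The statement is a direct computation once one unwinds the definitions, so the plan is simply to chase the scalars through the wedge product. First, I recall from the discussion preceding Lemma~\ref{l.fgamma} that $\gG_s$ acts on the chosen root vectors by
\[
\gG_s(e_\ga') = e^\ga(s)\, e_\ga', \qquad \gG_s(e_{-\ga}'') = e^\ga(s)\, e_{-\ga}''
\]
for $\ga \in R^+(\fu)$. (These formulas come from $(\Ad s)e_\ga = e^\ga(s)e_\ga$ for $s\in H$ and the fact that $\gG_s$ is the restriction of $(s,s^{-1})$, so on the second factor $e_{-\ga}$ is multiplied by $e^{-\ga}(s^{-1}) = e^\ga(s)$.)

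Next, under the identification~\eqref{e.HintoCn} of $H$ as a subset of $\C^n$, the monomial $t_\ga = t_1^{k_1(\ga)}\cdots t_n^{k_n(\ga)}$ evaluated at $s\in Z$ gives exactly $t_\ga(s) = e^\ga(s)$, since $\ga = \sum k_i(\ga)\ga_i$ and $s_i = e^{\ga_i}(s)$. Thus each basis vector $e_\ga'$ and $e_{-\ga}''$ is a $\gG_s$-eigenvector with eigenvalue $t_\ga(s)$.

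Since $\gG_s$ was extended to $\bigwedge \fr$ as an algebra automorphism (compatible with the wedge), applying it to $e''(B_1) = e_{-\gb_1}''\wedge\cdots\wedge e_{-\gb_k}''$ and to $e'(B_2)$ multiplies by the product of the respective eigenvalues:
\[
\gG_s(e''(B_1)) = \Bigl(\prod_{\ga\in B_1} t_\ga(s)\Bigr) e''(B_1),\qquad
\gG_s(e'(B_2)) = \Bigl(\prod_{\gb\in B_2} t_\gb(s)\Bigr) e'(B_2).
\]
Taking the wedge of these two identities and using the definition~\eqref{e.defFB} of $F_{B_1,B_2}$ gives $\gG_s(e(B_1,B_2)) = F_{B_1,B_2}(s)\, e(B_1,B_2)$.

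There is no substantial obstacle here; the only thing to be careful about is the sign/scalar bookkeeping, which is handled uniformly by $\gG_s$ being an algebra automorphism of $\bigwedge \fr$ (so no reordering of wedge factors is needed, and the computation is purely multiplicative in eigenvalues).
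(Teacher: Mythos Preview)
Your proof is correct and is essentially the same as the paper's, which simply asserts that the lemma follows immediately from the definition of $\gG_s$; you have just written out the details of that immediate computation.
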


\begin{proof}
This follows immediately from the definition of $\gG_s$.
\end{proof}

Define a positive definite Hermitian form $\{ \cdot, \cdot \}$
 on $\bigwedge \fu_{-}'' \otimes \bigwedge \fu'$
as in \cite[Section 2.4]{Kos:63}, so that the elements $e(B_1,B_2)$ give an orthonormal
basis of $\bigwedge \fu_{-}'' \otimes \bigwedge \fu'.$ 
 Let $\pastar$ denote the Hermitian adjoint
of $\pa$, and let $L_\fr = \pa \pastar + \pastar \pa$ be the corresponding
Laplacian. Let $L_0$ be the Green's operator for
$L_\fr$, so by definition,
$$
L_0(\ker(L_\fr))=0, \ \text{ and } \ L_0 \circ L_\fr (x) = L_\fr \circ L_0 (x) = x, \ x\in \im(L_\fr).
$$
By \cite[Theorem 5.7]{Kos:61}, $L_\fr$ acts as a constant multiple of the identity
on each $H\times H$
weight space of $C^{\cdot}(\fr)$.  
It follows that $L_0(e(B_1,B_2))$ is a multiple (possibly $0$) 
of $e(B_1,B_2)$.
An explicit formula for $L_0$ may be found in \cite[5.6.9]{Kos:63}.

Define an operator $E$ on $\bigwedge \fu_{-}'' \otimes \bigwedge \fu'$ by
\begin{equation} \label{e.defEI}
E = 2 \sum_{\ga \in R^+(\fu)} \ad (e_{-\ga}'') \otimes \ad (e_{\ga}')
\end{equation}
Let $R = -L_0 E$. Since $E$ is strictly upper triangular and  $L_0$ is diagonal
with respect to the basis $\{ e(B_1, B_2) \}$ of $(\bigwedge \fu_{-}'' \otimes \bigwedge \fu')$,
and $E$ and $L_0$ are $L$-equivariant, it follows that $R$ is a nilpotent operator
on $C^{\cdot}(\fr)$.

\begin{Def} \label{d.procedure}
Given subsets $B_1, B_2, B_3, B_4$ of subsets of $R^+(\fu)$,
we say $(B_3, B_4) > (B_1, B_2)$ if $F_{B_1, B_2}(t)$ divides
$F_{B_3, B_4}(t)$ in $\C[t_1, \dots, t_m]$ and $\frac{F_{B_3, B_4}}{F_{B_1, B_2}}(t)$
vanishes at $0$. We say $(B_3, B_4) > B_1$ if $(B_3, B_4) > (B_1, B_1)$.
\end{Def}

\begin{Rem} \label{r.procedure}
If $B_1, \dots, B_6$ are subsets of $R^+(\fu)$, and if
$(B_5, B_6) > (B_3, B_4) > (B_1, B_2)$, then it is easy to check
that $(B_5, B_6) > (B_1, B_2)$.
\end{Rem}

\begin{Def} \label{d.wtpoly}
For $B_i, B_j$ subsets of $R^+(\fu)$, let
$$
C^k(\fr)_{B_i, B_j} = \{ v\in C^k(\fr) : \Gamma_s\cdot v = F_{B_i,B_j}(s) v, \ \forall
s \in Z \}.
$$
For a subset $B$ of $R^+(\fu)$, let $C^k(\fr)_B =C^k(\fr)_{B,B}$, and
$C^k(\fr)_w = C^k(\fr)_{R^+ \cap w^{-1}(R^-)}$.
\end{Def}

\begin{Lem} \label{l.lemr}
If $B_1, \dots, B_4$ are subsets of $R^+(\fu)$ and $i > 0$, then for
$v\in C^k(\fr)_{B_1, B_2}$, then $R^i(v) \in C^k(\fr)_{B_3, B_4}$
with $(B_3, B_4) > (B_1, B_2)$.
\end{Lem}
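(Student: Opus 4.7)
The plan is to track how the operator $R = -L_0 E$ shifts $\Gamma_s$-weights on $C^{\cdot}(\fr)$, and then iterate using the transitivity noted in Remark~\ref{r.procedure}. The statement is understood in the natural way: the $\Gamma_s$-weight decomposition of $R^i(v)$ consists only of components lying in subspaces $C^k(\fr)_{B_3,B_4}$ with $(B_3,B_4) > (B_1,B_2)$.

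The first step is to analyze how $E$ changes weights. Recall from the definitions that $\Gamma_s$ acts on $e_{-\alpha}''$ and on $e_\alpha'$ by the scalar $e^\alpha(s)$, so $C^k(\fr)_{B_1,B_2}$ is the $\Gamma_s$-eigenspace with eigenvalue $F_{B_1,B_2}(s)$, spanned by those basis vectors $e(B_1',B_2')$ with $F_{B_1',B_2'} = F_{B_1,B_2}$ (Lemma~\ref{l.gammafb}). Now $\ad(e_{-\alpha}'')$ is a degree-$0$ derivation of $\bigwedge \fu_-''$; applied to $e''(B_1)$, each term replaces some $e_{-\beta}''$ by a scalar multiple of $e_{-(\alpha+\beta)}''$, which raises the $\Gamma_s$-eigenvalue by the factor $e^\alpha(s)$. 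The same happens on the $\bigwedge \fu'$ side: $\ad(e_\alpha')$ multiplies the $\Gamma_s$-eigenvalue by $e^\alpha(s)$. Hence the summand $\ad(e_{-\alpha}'') \otimes \ad(e_\alpha')$ of $E$ multiplies the weight monomial by $e^{2\alpha}(s) = t_\alpha^2(s)$. In particular, $E$ sends $C^k(\fr)_{B_1,B_2}$ into $\bigoplus_{\alpha \in R^+(\fu)} C^k(\fr)_\mu$ where each weight $\mu$ occurring in the image satisfies $F_\mu(t) = F_{B_1,B_2}(t)\cdot t_\alpha^2$ for some $\alpha \in R^+(\fu)$. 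Writing any such $\mu$ as $F_{B_3,B_4}$ for some choice of $(B_3,B_4)$, we have $F_{B_3,B_4}/F_{B_1,B_2} = t_\alpha^2$, which vanishes at $0$, so $(B_3,B_4) > (B_1,B_2)$ by Definition~\ref{d.procedure}.

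Second, the Green's operator $L_0$ is, as recalled from the text, diagonal in the basis $e(B_1',B_2')$, and hence preserves each $\Gamma_s$-weight space. Therefore $R = -L_0 E$ has exactly the same weight-shifting property as $E$: it sends $C^k(\fr)_{B_1,B_2}$ into the direct sum of subspaces $C^k(\fr)_{B_3,B_4}$ with $(B_3,B_4) > (B_1,B_2)$ and with $F_{B_3,B_4}/F_{B_1,B_2}$ a single factor of the form $t_\alpha^2$.

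Finally, I would iterate. Applying $R$ a second time multiplies the weight monomial by an additional factor $t_{\alpha'}^2$, and by Remark~\ref{r.procedure} (transitivity of $>$), the resulting labels $(B_5,B_6)$ still satisfy $(B_5,B_6) > (B_1,B_2)$. Induction on $i$ gives the claim. The only mildly subtle point --- and the main thing to be careful about --- is that $R^i(v)$ need not be a pure weight vector, so one must phrase the conclusion in terms of its $\Gamma_s$-weight decomposition; each nonzero weight component has weight monomial $F_{B_1,B_2}(t)\cdot \prod_{j=1}^i t_{\alpha_{(j)}}^2$ for certain $\alpha_{(j)} \in R^+(\fu)$, and hence lies in $C^k(\fr)_{B_3,B_4}$ for some $(B_3,B_4) > (B_1,B_2)$.
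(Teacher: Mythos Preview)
Your proposal is correct and follows essentially the same approach as the paper's proof: reduce to $i=1$ via the transitivity of Remark~\ref{r.procedure}, use that $L_0$ is diagonal on weight spaces so it suffices to analyze $E$, and then observe that each summand $\ad(e_{-\alpha}'')\otimes\ad(e_{\alpha}')$ raises the $\Gamma_s$-weight monomial by the factor $t_\alpha^2$ because $\ad(e_{\pm\alpha})$ are derivations of $\bigwedge \fu_{\mp}$. Your write-up is simply more explicit about the weight computation than the paper's two-line sketch.
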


\begin{proof}
By Remark \ref{r.procedure}, it suffices to verify the assertion when 
$i=1$. Since $L_0$ acts diagonally on weight spaces, it suffices to
prove that for $\ga \in R^+$, 
$$\ad(e_{-\ga}'') \otimes \ad(e_{\ga})
(C^k(\fr)_{B_1, B_2}) \subset \oplus C^k(\fr)_{B_3, B_4},$$ 
where the sum is over $(B_3, B_4) > (B_1, B_2)$.
This follows from the fact that $\ad(e_{\ga})$ and
 $\ad(e_{-\ga})$ are derivations
of $\bigwedge \fu$ and $\bigwedge \fu_-$, respectively.
\end{proof}

In a series of papers (\cite{Kos:61}, \cite{Kos:63} and \cite{KoKu:86}), Kostant and 
Kostant-Kumar determined $d_1$-closed 
vectors $s_w \in C^{2l(w)}(\fr)$
with the property that the cohomology class $[s_w] \in H^{2l(w)}(\fr, d_1)$
corresponds to $\gre_w$ using the isomorphisms $H^*(C^{\cdot}(\fr), d_1) \cong
H^*(\fg_1, \fl_{\gD}) \cong H^*(G/P)$. We state their results in a form suitable
for our use.  We let $\rho = \frac{1}{2} \sum_{\ga \in R^+} \ga$ and 
define
\begin{equation} \label{e.deflw}
\gl_w = \prod_{\ga \in R^+ \cap w^{-1} R^- } \frac{2\pi }{<\rho, \ga >},
\end{equation}
where $< \cdot, \cdot >$ is the form on $\fh^*$ induced by the Killing form.
For a rational $H$-module $M$ and a character $\chi$ of $H$, let $M_\chi$ denote
the $\chi$-weight space of $M$, and for $v\in M$, let $v_\chi$ denote the
projection of $v$ to $M_\chi$ with respect to the decomposition, $M = \oplus M_\nu$,
where $\nu$ runs over characters of $H$.

\begin{Thm} \label{t.kostantbwpaper} (\cite{Kos:61}, \cite[Theorem 42]{BeKu:06})
For each $w\in W^P$, there exists a unique vector $k^w$ up to scalar multiplication such that
$$
\C k^w \in (\wedge^{l(w)}(\fu_-'' ) \otimes \wedge^{l(w)}(\fu'))^L
$$
and
\par\noindent (1) $k^w$ is $d_0$-closed;
\par\noindent (2) For $s\in Z$, $\gG_s (k^w) = F_w(s) k^w.$
\par\noindent (3)  $H^*(C^{\cdot}(\fr), d_0)$ has basis $\{ k^w : w\in W^P \}$.
\end{Thm}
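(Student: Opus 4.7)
The plan is to combine Kostant's classical computation of $H^*(\fu \oplus \fu_-)^L$ with the identifications of Section~\ref{s.rellie}. By Remark~\ref{r:fgspecial}, $\fg_0 = \fl_\gD + \fr^{op}$ with $\fr^{op}$ an $L$-stable Lie ideal complementary to $\fl_\gD$, so the standard computation of relative Lie algebra cohomology of a split extension \cite{BoWa:00} gives $H^*(\fg_0, \fl_\gD) \cong H^*(\fr^{op})^L$; via the isomorphism $f_0$ this is $H^*(C^\cdot(\fr), d_0)$. Since $\fr^{op} = \fu_-' \oplus \fu''$ is a direct sum of commuting Lie subalgebras, K\"unneth and passage to $L$-invariants yield $(H^*(\fu_-) \otimes H^*(\fu))^L$. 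Kostant's Theorem 5.14 of \cite{Kos:61} then identifies each factor as an $L$-module: for each $w \in W^P$ there is a unique irreducible summand $V_w$ in $H^{l(w)}(\fu)$ and its dual $V_w^*$ in $H^{l(w)}(\fu_-)$, and distinct $w \in W^P$ give inequivalent $V_w$. Schur's lemma with multiplicity one produces a one-dimensional $L$-invariant line in each $V_w^* \otimes V_w$ of bidegree $(l(w), l(w))$; choosing a harmonic representative $k^w$ in this line makes it $d_0$-closed. This establishes (1), (2), (3), and the uniqueness of $k^w$ up to scalar.

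For the scalar formula in~(2), I would observe that $\gG_s$ commutes with the diagonal $L$-action on $\fr$ for $s \in Z$, hence preserves the one-dimensional line $\C k^w$ and acts on it by a scalar. To identify this scalar as $F_w(s)$ one tracks $\fh$-weights. On $\wedge \fu_-''$ the operator $\gG_s$ sends a weight-$\mu$ vector to $e^{-\mu}(s)$ times itself (since $\gG_s(e_{-\ga}'') = e^{\ga}(s) e_{-\ga}''$ inverts the $\ad s$-action on $\fu_-''$), while on $\wedge \fu'$ it agrees with $\ad s$ and acts by $e^{\nu}(s)$ on a weight-$\nu$ vector. Kostant's explicit description of the harmonic form in each factor pins down the relevant weight (up to sign) as $\rho - w^{-1}\rho = \sum_{\ga \in R^+ \cap w^{-1} R^-}\ga$, so that the two contributions combine on the bidegree $(l(w),l(w))$ part to give
$$
\gG_s(k^w) = e^{2(\rho - w^{-1}\rho)}(s)\, k^w = F_w(s)\, k^w.
$$
The principal obstacle is matching Kostant's indexing convention so that the $w$-th summand produces the weight $\rho - w^{-1}\rho$ rather than $\rho - w\rho$ or a sign-flipped variant; once these conventions are unwound the theorem reduces to \cite[Theorem 42]{BeKu:06}.
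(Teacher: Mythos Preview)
The paper does not prove this theorem; it is stated as a known result with references to \cite{Kos:61} and \cite[Theorem~42]{BeKu:06}, and no proof is supplied. The construction of $k^w$ is, however, spelled out in Remark~\ref{t.kostexplain} immediately following the statement, and your outline matches that explanation essentially point for point: one identifies the irreducible $L$-submodules $M_w \subset \wedge^{l(w)}\fu_-''$ and $N_w \cong M_w^* \subset \wedge^{l(w)}\fu'$ coming from Kostant's decomposition, applies Schur's lemma to see that $(M_w \otimes N_w)^L$ is a line, and reads off the $\gG_s$-eigenvalue from the highest weight $w^{-1}\rho - \rho$. Your weight computation for part~(2) is correct: since $s \in Z$ is central in $L$, the operator $\gG_s$ acts by a single scalar on each irreducible $M_w$ and $N_w$, and that scalar is visible on any weight vector, in particular on $e''(w)$ and $e'(w)$, giving $F_w(s)$.

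One small point of care: your argument passes through cohomology and then lifts to a harmonic representative, whereas the theorem is a statement at the cochain level. This is harmless because Kostant's harmonic space maps isomorphically onto cohomology, but you should say explicitly that $k^w$ is taken to be the (unique up to scalar) element of the harmonic subspace $(M_w \otimes N_w)^L \subset \ker L_\fr$, not merely a cohomology class. It is then automatic that $k^w \in \ker d_0$, since on $C^\cdot(\fr)$ one has $d_0 = \pa^{tr} = -\pastar$ (cf.\ the proof of Theorem~\ref{t.disjointkostant}) and harmonic vectors lie in $\ker\pastar$.
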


We let
\begin{equation} \label{e.defsw}
s_w = (1-R)^{-1} k^w 
=  k^w + R k^w + R^2 k^w + \cdots.
\end{equation}

\begin{Thm} \label{t.koskoskumar} \cite{Kos:63}
For $w\in W^P$, then $s_w$ is $d_1$-closed, and we can (and do) normalize $k^w$
so that
 $f_1(s_w)$ is a representative for
the cohomology class $\gre_w \in H^{2l(w)}(G/P)$, using the
identification $H^*(\fg_1, \fl_{\gD})\cong H^*(G/P)$ from
equation \eqref{e.coh1iscoh0}.
\end{Thm}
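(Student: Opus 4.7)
My plan is to translate Kostant's harmonic-representative construction from \cite{Kos:63} into the present framework. The proof splits into two parts: $d_1$-closedness of $s_w$, and identification of $[f_1(s_w)]$ with $\gre_w$.

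The first preliminary step is to establish an identity of the form $d_1 = d_0 + cE$ on $C^{\cdot}(\fr)$ for a specific constant $c$. This follows by direct computation from the bracket formulas in the proof of Lemma \ref{l.ccvb} at $t=1$: the ``same-sign'' bracket contributions to $d_1$ agree with those for $d_0$, while the ``cross-sign'' contributions, absent from $d_0$, assemble into the operator $E$ of \eqref{e.defEI} under the identification $f_1$. A complementary observation is that, under the Hermitian form $\{\cdot,\cdot\}$ fixed just before \eqref{e.defEI} and with the Killing normalization $(e_\ga, e_{-\ga}) = 1$, the cochain differential $d_0$ coincides with the Hermitian adjoint $\pastar$ of the homology boundary; in particular $d_0$ commutes with $L_\fr$, and hence with $L_0$.

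For closedness, I would apply Kostant's argument from \cite{Kos:63}. The key operator identity is $(1-R) d_1 = d_0(1-R)$ on $\ker d_0$, equivalently $[d_0, R] = c(R-1)E$ there. Its verification uses $R = -L_0 E$, the commutativity $[d_0, L_0] = 0$, the anticommutator $\{d_0, cE\} = -c^2 E^2$ (a reformulation of $d_1^2 = 0$), and the Hodge decomposition $C^{\cdot}(\fr) = \ker L_\fr \oplus \im \pa \oplus \im d_0$ together with the relation $d_0 \pa + \pa d_0 = L_\fr$. Since $k^w$ lies in the harmonic subspace $\ker L_\fr \subset \ker d_0$ by Theorem \ref{t.kostantbwpaper}(1), applying the identity gives
\[
d_1 s_w \;=\; d_1(1-R)^{-1} k^w \;=\; (1-R)^{-1} d_0 k^w \;=\; 0.
\]

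For the identification $[f_1(s_w)] = \gre_w$ (after normalization), I observe that $s_w = k^w + R s_w$ and that every term of $R s_w$ lies in a strictly higher $Z$-weight subspace than $F_w$, by Lemma \ref{l.lemr}. Thus the leading $Z$-weight component of $s_w$ is the Kostant class $k^w$, whose class is nonzero in $H^{2l(w)}(\fg_0, \fl_{\gD})$ by Theorem \ref{t.kostantbwpaper}(3). A weight comparison with Kostant's explicit harmonic Schubert representatives in \cite{Kos:63} then identifies $[f_1(s_w)]$ with $\gl_w\,\gre_w$ for $\gl_w$ as in \eqref{e.deflw}; absorbing $\gl_w$ into the normalization of $k^w$ gives the theorem. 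The main obstacle is the verification of the commutator identity $[d_0, R] = c(R-1)E$ on $\ker d_0$ with the correct sign and constant, together with the book-keeping needed to match Kostant's normalization; once these are in hand, the recursion $s_w = (1-R)^{-1} k^w$ does the rest of the work automatically.
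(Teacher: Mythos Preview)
The paper gives no proof of this theorem; it is quoted from \cite{Kos:63}, with Remark~\ref{t.kostexplain} recording the explicit normalization. So there is no in-paper argument to compare against, and your task is really to reconstruct Kostant's proof.

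Your sketch has a genuine gap at the very first step. The operator $E$ of \eqref{e.defEI} is built from the degree-zero derivations $\ad(e_{-\ga}'')$ and $\ad(e_{\ga}')$ of the two exterior algebras, so $E$ itself has degree~$0$ on $C^{\cdot}(\fr)$, whereas $d_0$ and $d_1$ have degree~$+1$. An identity of the form $d_1 = d_0 + cE$ is therefore impossible, and the intertwining relation $(1-R)d_1 = d_0(1-R)$ you deduce from it cannot hold. Even granting such an identity on $\ker d_0$, your application $d_1 s_w = (1-R)^{-1} d_0 k^w$ would require it as an operator identity (or at least on the span of the $R^i k^w$), since $s_w = k^w + R k^w + \cdots$ does not lie in $\ker d_0$ in general. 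A minor related point: the paper itself records (in the proof of Theorem~\ref{t.disjointkostant}) that $d_0 = \pa^{tr} = -\pastar$, so your sign convention $d_0 = \pastar$ is also off.

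The route in \cite{Kos:63} goes through the degree-zero Laplacian $S_1 := d_1\pa + \pa d_1$: Kostant proves an identity of the shape $S_1 = L_{\fr} - E$ (now all three operators have degree~$0$, so $E$ enters legitimately), checks directly that $s_w = (1-R)^{-1} k^w$ lies in $\ker S_1$, and then invokes the disjointness of $d_1$ and $\pa$ (\cite[Theorem~4.5]{Kos:63}, restated here as the $t=1$ case of Theorem~\ref{t.disjointkostant}) to conclude $\ker S_1 = \ker d_1 \cap \ker \pa$. This gives both $d_1 s_w = 0$ and $s_w \in \ker\pa$ at once; the present paper uses the latter fact in Section~\ref{s.disjoint}. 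Your treatment of the identification $[f_1(s_w)] = \gre_w$ is fine in outline and is exactly what Remark~\ref{t.kostexplain} makes explicit.
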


\begin{Rem} \label{t.kostexplain}
Explicitly let $e''(w)=e''(R^+ \cap w^{-1}R^-)$ and let $e'(w) = e'(R^+ \cap w^{-1}R^-)$.
Then $M_w := U(\fl)\cdot e''(w)$ is an irreducible representation of $L$ in
$\bigwedge^{l(w)}(\fu_-'')$ with highest weight $w^{-1} \rho - \rho$. 
Further, $N_w := U(\fl) \cdot e'(w)$ is an irreducible representation of 
$L$ with lowest weight $\rho - w^{-1} \rho$ and $N_w$ is isomorphic to the dual
$M_w^*$ of $M_w$. Thus, $(M_w \otimes N_w)^L \cong \Hom_L(M_w, M_w)$ is
one-dimensional, and $k^w$ is a nonzero vector in $(M_w \otimes N_w)^L$.
If we normalize $k^w$ so its projection to $(M_w)_{w^{-1}(\rho) - \rho} \otimes N_w$
is in $\frac{i^{l(w)^2} e''(w)}{\gl_w} \otimes N_w$, this agrees with the normalization
given in Theorem \ref{t.koskoskumar} (see \cite[Theorem 43]{BeKu:06}).
\end{Rem}



Since $s_w$ is a $d_1$-cocycle, $f_1(s_w)$ is a $d_{\fg_1}$-cocycle.
For $s \in Z$, we
define a cocycle $\cs_w(s) \in (C^{2l(w)}(\fr), d_{s})$ by
\begin{equation} \label{e.defswt}
\cs_w(s) = \gG_s (s_w).
\end{equation}
We view $\cs_w$ as a function from $Z$ to $C^{\cdot}(\fr)$.
We show below that $\cs_w$ extends from $Z$ to all of 
$\C^m$.  
By Corollary \ref{c.cohisom}, for each $s \in Z$, $\cs_w(s)$ 
is a $d_{s}$-cocycle,
and by Theorem \ref{t.koskoskumar},
the classes $[\cs_w(s)]$ give a basis of $ H^*(C^{\cdot}(\fr), d_{s})$.
Note that by Lemma \ref{l.fgammacomplex},
$$
f_{s}(\cs_w(s)) = (s^{-1}, s)^* f_1(s_w).
$$

\begin{Lem} \label{l.gammasw}
Let $w \in W$ and $s \in Z$.  Then
\begin{equation} \label{e.gammasw}
\cs_w(s) =  F_{w}(s) k^w
+ \sum_{B_1, B_2} c_{B_1,B_2} F_{B_1,B_2}(s) e(B_1,B_2),
\end{equation}
where $c_{B_1,B_2} \in \C$,
and the sum is over all 
pairs of subsets $B_1,B_2$ of $R^+(\fu)$ such that 
$(B_1, B_2) > R^+ \cap w^{-1}R^-$. 
In particular, for $t\in \C^m$, each $F_{B_1,B_2}(t)$ is divisible by $F_w(t)$, and
$F_{B_1,B_2}/F_w$ defines a regular function on $\C^m$ vanishing at $0$.
\end{Lem}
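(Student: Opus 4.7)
The plan is to read the definition \eqref{e.defsw} of $s_w$ as a finite (because $R$ is nilpotent on $C^{\cdot}(\fr)$) series and track weights with respect to the $Z$-action $\gG_s$. Writing
\[
s_w = k^w + R k^w + R^2 k^w + \cdots,
\]
we apply $\gG_s$ term by term, using that $\gG_s$ is a linear operator which on each weight space $C^{\cdot}(\fr)_{B_1,B_2}$ acts by the scalar $F_{B_1,B_2}(s)$ by Lemma \ref{l.gammafb}. The main task is therefore to show that for $i \ge 1$, every element $R^i k^w$ lives in a sum of weight spaces $C^{\cdot}(\fr)_{B_1,B_2}$ with $(B_1,B_2) > R^+ \cap w^{-1}R^-$.

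By Theorem \ref{t.kostantbwpaper}(2), $k^w \in C^{\cdot}(\fr)_w = C^{\cdot}(\fr)_{R^+ \cap w^{-1}R^-}$. Lemma \ref{l.lemr} says that $R$ shifts weight spaces strictly upward in the ordering of Definition \ref{d.procedure}, so iterating and using Remark \ref{r.procedure} (transitivity of the order), $R^i k^w$ for $i \ge 1$ lies in the sum of $C^{\cdot}(\fr)_{B_1,B_2}$ over $(B_1,B_2) > R^+ \cap w^{-1}R^-$. Expanding each $R^i k^w$ in the orthonormal basis $\{e(B_1,B_2)\}$ as
\[
R^i k^w = \sum_{(B_1,B_2) > R^+ \cap w^{-1}R^-} c^{(i)}_{B_1,B_2}\, e(B_1,B_2),
\]
with scalars $c^{(i)}_{B_1,B_2} \in \C$ independent of $s$, and applying Lemma \ref{l.gammafb} gives $\gG_s(R^i k^w) = \sum c^{(i)}_{B_1,B_2} F_{B_1,B_2}(s)\, e(B_1,B_2)$. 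Meanwhile, the $i=0$ term contributes $\gG_s(k^w) = F_w(s) k^w$ directly from Theorem \ref{t.kostantbwpaper}(2). Setting $c_{B_1,B_2} = \sum_{i \ge 1} c^{(i)}_{B_1,B_2}$ then yields \eqref{e.gammasw}.

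For the final assertion, divisibility of $F_{B_1,B_2}$ by $F_w$ is immediate from the definition of $(B_1,B_2) > R^+ \cap w^{-1}R^-$, and the quotient vanishing at $0$ is built into that definition as well. This observation is what allows $\cs_w$, originally defined on $Z$, to extend to a regular map on all of $\C^m$: the leading factor $F_w(s)$ can be pulled out of every summand, leaving polynomial ratios $F_{B_1,B_2}/F_w$ that are globally regular on $\C^m$ and vanish at $0$. The one subtlety I expect to be the main technical point is the nilpotency of $R$ used implicitly above and the careful bookkeeping that the weight-raising property of $R$ is preserved under iteration, but both follow cleanly from the combinatorial fact that each application of $\ad(e_{-\ga}'') \otimes \ad(e_{\ga}')$ in the definition \eqref{e.defEI} of $E$ strictly enlarges the weight monomial in the sense of Definition \ref{d.procedure}.
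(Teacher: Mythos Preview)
Your proof is correct and follows essentially the same approach as the paper's: expand $s_w$ via \eqref{e.defsw}, use Theorem \ref{t.kostantbwpaper}(2) to place $k^w$ in the right weight space, apply Lemma \ref{l.lemr} (with transitivity from Remark \ref{r.procedure}) to push $R^i k^w$ into strictly higher weight spaces, and then invoke Lemma \ref{l.gammafb} to compute $\gG_s$. Your version is slightly more explicit about summing the coefficients over $i$, but the argument is the same.
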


\begin{proof}
By definition, $s_w = k^w + \sum_{i > 0} R^i k^w,$ so
$\cs_w(s) = \gG_s k^w + \sum_{i > 0} \gG_s R^i k^w.$
By Theorem \ref{t.kostantbwpaper}, 
$k^w \in C^{2l(w)}(\fr)_{R^+ \cap w^{-1} R^-},$
so by Lemma \ref{l.lemr}, 
$R^i k^w = \sum_{B_1, B_2} c_{B_1, B_2} e(B_1, B_2)$, where the sum is
over pairs $(B_1, B_2)$ such that $(B_1, B_2) > R^+ \cap w^{-1} R^-$.
equation \eqref{e.gammasw} now follows from the formula for the action of 
$\gG_s$ on $e(B_1,B_2)$ from
 Lemma \ref{l.gammafb}, and the remainder follows using
Definition \ref{d.procedure}.
\end{proof}

We may now define $\cs_w(t)$ for $t\in \C^m$ using equation \eqref{e.gammasw}
with $t$ in place of $s$.
The lemma implies that $\cs_w$ is a regular function
on $\C^m$ (with values in $C^{\cdot}(\fr)$).

\begin{Def} \label{def.global}
Define  $G_w(t) \in C^{\cdot}(\fr)$, for $t \in \C^m$,
by the formula
$$
G_w(t) =  k^w
+ \sum_{B_1,B_2} c_{B_1,B_2} \left( \frac{F_{B_1,B_2}}{F_w} \right) (t) 
e(B_1,B_2),
$$
where $c_{B_1,B_2}$ are as in Lemma \ref{l.gammasw}.
(Note that by Lemma \ref{l.gammasw}, $\frac{F_{B_1,B_2}}{F_w} $
is a regular function on $\C^m$.)
\end{Def}

If $s \in Z$, then $F_w(s) \neq 0$, and then
\begin{equation}\label{e.gwdef}
G_w(s) = \frac{\cs_w(s)}{F_w(s)}.
\end{equation}

\begin{Lem} \label{l.gammatlambdaw}
If $s \in Z$ and $t \in \C^m$, then
\begin{equation} \label{e.gtlw}
\gG_s (G_w(t)) = F_w(s) G_w(s t).
\end{equation}
\end{Lem}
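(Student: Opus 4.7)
My plan is to establish the identity on the open dense torus $Z \subset \C^m$ first, where the factorization of all objects is transparent, and then extend to all of $\C^m$ by a density/regularity argument. The identity is essentially a repackaging of the semigroup property $\gG_s \circ \gG_t = \gG_{st}$ together with the multiplicativity of the monomial $F_w$; I do not anticipate any serious technical obstacle.

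The first step is to specialize to $s, t \in Z$. In this regime equation \eqref{e.gwdef} writes $G_w(t) = \cs_w(t)/F_w(t)$, and the definition \eqref{e.defswt} gives $\cs_w(t) = \gG_t(s_w)$. Since $\gG$ is a linear action of $Z$ on $C^{\cdot}(\fr)$ (immediate from \eqref{e.gammas} since $(s,s^{-1})$ is a one-parameter-group assignment), one has $\gG_s \gG_t = \gG_{st}$, whence $\gG_s(\cs_w(t)) = \cs_w(st)$. Moreover $F_w(t) = \prod_{\ga \in R^+ \cap w^{-1}R^-} t_\ga^2$ is a product of characters of $H$, so $F_w(st) = F_w(s) F_w(t)$. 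Chaining these gives
\begin{equation*}
\gG_s(G_w(t)) \;=\; \frac{\gG_s(\cs_w(t))}{F_w(t)} \;=\; \frac{\cs_w(st)}{F_w(t)} \;=\; \frac{F_w(st)\, G_w(st)}{F_w(t)} \;=\; F_w(s)\, G_w(st),
\end{equation*}
which is \eqref{e.gtlw} on $Z$.

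The second step is to extend to arbitrary $t \in \C^m$. Fix $s \in Z$. Both sides of \eqref{e.gtlw} are regular $C^{\cdot}(\fr)$-valued functions of $t$ on $\C^m$: the left side because $G_w$ is regular by Definition \ref{def.global} and Lemma \ref{l.gammasw} and $\gG_s$ is linear, the right side because $t \mapsto st$ is the morphism $\C^m \to \C^m$ of coordinate-wise multiplication by $(e^{\ga_1}(s), \ldots, e^{\ga_m}(s))$ and $G_w$ is regular. Since $Z$ is dense in $\C^m$ and two regular maps agreeing on a dense subset agree everywhere, the identity holds on all of $\C^m$.

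As an alternative route avoiding the density argument, one can substitute Definition \ref{def.global} directly and invoke Lemma \ref{l.gammafb}: the equality then reduces to the elementary monomial identity $F_{B_1,B_2}(st)/F_w(st) = \bigl(F_{B_1,B_2}(s)/F_w(s)\bigr)\bigl(F_{B_1,B_2}(t)/F_w(t)\bigr)$, which holds because $F_{B_1,B_2}$ and $F_w$ are products of the character monomials $t_\ga$ (and the ratio is regular on $\C^m$ by Lemma \ref{l.gammasw}). Either route is short; I would present the first since it uses the conceptual content already assembled.
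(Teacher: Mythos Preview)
Your proposal is correct. Amusingly, the paper takes exactly the route you label as the ``alternative'': it substitutes the explicit expansion of $G_w$ from Definition \ref{def.global} into both sides, applies Lemma \ref{l.gammafb} to compute $\gG_s$ term by term, and then checks the monomial identity
\[
\left(\frac{F_{B_1,B_2}}{F_w}\right)(st)\,F_w(s) \;=\; \left(\frac{F_{B_1,B_2}}{F_w}\right)(t)\,F_{B_1,B_2}(s).
\]
Your primary argument is a genuinely different route: rather than expanding, you use the semigroup property $\gG_s\gG_t = \gG_{st}$ and the multiplicativity of $F_w$ to get the identity on $Z$ for free, then extend by density. This is arguably cleaner conceptually, since it isolates exactly why the identity holds (it is the defining equation \eqref{e.gwdef} transported by the $Z$-action) and avoids any manipulation of the sum over $(B_1,B_2)$. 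The paper's approach, on the other hand, is self-contained at the level of the explicit formula and avoids invoking regularity of $G_w$ on the boundary as a separate input. Both are short and either would be fine to present.
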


\begin{proof}
By definition,
$$
F_w(s) G_w(s t) =   F_w(s) k^w
+ \sum_{B_1,B_2} c_{B_1,B_2} \left( \frac{F_{B_1,B_2}}{F_w} \right) (s t) 
F_w(s) e(B_1,B_2),
$$
On the other hand, 
by Lemma \ref{l.gammafb}, 
$$
\gG_s(G_w(t)) =  F_w(s) k^w
+ \sum_{B_1,B_2} c_{B_1,B_2} \left( \frac{F_{B_1,B_2}}{F_w} \right) (t) F_{B_1,B_2}(s) e(B_1,B_2).
$$
As
$$
 \left( \frac{F_{B_1,B_2}}{F_w} \right) (s t) F_w(s) = 
\left( \frac{F_{B_1,B_2}}{F_w} \right) (t) F_{B_1,B_2}(s),
 $$
 the lemma follows.
 \end{proof}

Recall the total cohomology bundle $\chstar$ 
on $\C^m$ from equation \eqref{e.totalcoh}.
As noted above, $\cs_w(s)$ is a $d_{s}$-cocycle
for $s \in Z$.  By continuity this implies that $\cs_w(t)$ is a $d_t$-cocycle
for all $t \in \C^m$.  Let $[\cs_w]$ denote the element
of $\chstar(\C^m)$ (i.e. the section
of $\chstar$ on $\C^m$) defined by $\cs_w$.  As noted above,
for
$s\in Z$, the classes $[\cs_w(s)]$ give
 a basis of $\chstar_{s}$.
However, this is not true for all $t \in \C^m$: in particular,
it fails at $t =0$, since
$\cs_w(0) = 0$ if $w \neq e$ by equation \eqref{e.gammasw}.   The next theorem
shows that by replacing $\cs_w$ by $G_w$ we obtain a basis
for all $t \in \C^m$.

\begin{Thm} \label{t.main1}
For all $t \in \C^m$, the classes $G_w(t)$ are elements of
$C^{\cdot}(\fr)$ which are $d_t$-cocycles, and $\{ [G_w(t)] \}_{w\in W^P}$
is a basis of $H^*(C^{\cdot}(\fr), d_t)$.
Thus, the class of each $G_w$ defines a global section
of the vector bundle $\chstar$ on $\C^m$, and
 the classes $[G_w]$ give a trivialization of this vector
bundle on $\C^m$.
\end{Thm}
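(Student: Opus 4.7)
The plan is to establish three things in order: that each $G_w(t)$ is a $d_t$-cocycle for every $t \in \C^m$; that at $t = 0$ the classes $\{[G_w(0)]\}_{w \in W^P}$ form a basis of $H^*(C^{\cdot}(\fr), d_0)$; and that the basis property then propagates from $0$ to all of $\C^m$ via a Zariski-closure argument using the $Z$-action.

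For the first step, I would argue by continuity from $Z$. Writing $\cs_w(s) = \gG_s(s_w)$, Corollary \ref{c.cohisom} together with Theorem \ref{t.koskoskumar} (which says $s_w$ is $d_1$-closed) shows that each $\cs_w(s)$ is a $d_s$-cocycle for $s \in Z$; by equation \eqref{e.gwdef} the same is true of $G_w(s)$. Since $t \mapsto d_t$ is a morphism of varieties (Lemma \ref{l.ccvb}) and $t \mapsto G_w(t)$ is regular on $\C^m$ by Definition \ref{def.global}, the assignment $t \mapsto d_t G_w(t)$ is a morphism from $\C^m$ to $C^{\cdot}(\fr)$; it vanishes on the Zariski dense open subset $Z \subset \C^m$, and hence vanishes on all of $\C^m$.

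For the second step, I evaluate $G_w(0)$ directly from Definition \ref{def.global}. By Lemma \ref{l.gammasw}, each of the rational functions $F_{B_1,B_2}/F_w$ occurring in the expansion of $G_w$ is regular on $\C^m$ and vanishes at $0$. Therefore $G_w(0) = k^w$, and by Theorem \ref{t.kostantbwpaper}(3) the classes $\{[k^w]\}_{w\in W^P}$ are a basis of $H^*(C^{\cdot}(\fr), d_0)$.

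For the third step, which I expect to be the main obstacle, let $V \subseteq \C^m$ denote the locus of $t$ at which $\{[G_w(t)]\}_{w \in W^P}$ fails to be a basis of the fiber $\chstar_t$. The classes $[G_w]$ define a morphism of vector bundles from the trivial rank-$|W^P|$ bundle to $\chstar$, which by Corollary \ref{c.csheafrank} also has rank $|W^P|$; in any local trivialization the failure of this map to be an isomorphism at $t$ is cut out by the vanishing of a determinant, so $V$ is Zariski closed. Lemma \ref{l.gammatlambdaw} gives $\gG_s G_w(t) = F_w(s) G_w(s t)$, and since $\gG_s$ induces an isomorphism on cohomology (Corollary \ref{c.cohisom}) with $F_w(s) \neq 0$ for $s \in Z$, the basis property at $t$ is equivalent to the basis property at $s t$; hence $V$ is $Z$-invariant. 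By the second step $0 \notin V$, and since every $Z$-orbit in $\C^m$ has $0$ in its closure (let the coordinates of $s \in Z$ tend to $0$), any nonempty $Z$-invariant Zariski closed subset of $\C^m$ must contain $0$. Therefore $V = \emptyset$, which simultaneously yields the basis statement at every $t$ and the trivialization assertion, since the bundle map $\co_{\C^m}^{|W^P|} \to \chstar$ is then an isomorphism on every fiber.
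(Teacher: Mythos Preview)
Your proof is correct and follows essentially the same strategy as the paper's. The only cosmetic difference is in the propagation step: the paper argues that the basis property holds on an open neighborhood $A$ of $0$ and then uses $ZA = \C^m$, whereas you take the complementary closed-set viewpoint, showing the failure locus $V$ is closed, $Z$-invariant, and misses $0$, hence is empty; these are dual formulations of the same argument.
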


\begin{proof}
For
$s\in Z$, $G_w(s)$ is a constant multiple of $\cs_w(s)$.
Since $\cs_w(s)$ is a $d_{s}$-cocycle, so is $G_w(s)$.  
By continuity, $G_w(t)$ is a $d_t$-cocycle for
all $t \in \C^m$.  
By the preceding paragraph, the class of each $G_w$
defines a global section of the vector bundle $\chstar$; we denote
this section by $[G_w]$.
Note that $G_w(0) =  k^w$.
It follows from Kostant's Theorem \ref{t.kostantbwpaper} that
the
$[G_w(0)]$ form a basis of the cohomology
group $\chstar_0 = 
H^{*}(C^{\cdot}(\fr), d_0)$.
Therefore there is an
open neighborhood $A$ of $0$ such that  the
$[G_w(a)]$ form a basis of $\chstar_a$ for $a \in A$.  
If $s\in Z$, then by Corollary \ref{c.cohisom},
 $\gG_s$ gives an isomorphism $\chstar_a
\to \chstar_{s a}$.  It follows that the
$[\gG_s(G_w(a)]$ form a basis for $\chstar_{s a}$.
But $[\gG_s(G_w(a))] = [F_w(s) G_w(s a)]$.  The scalar
$F_w(s)$ is nonzero as $s \in Z$, so the $[G_w(s a)]$
form a basis of $\chstar_{s a}$ for all $s a \in ZA$.
But $ZA = \C^m$, and  the theorem follows.
\end{proof}

Let $c_{uv}^w$ denote the structure constants in the ring
$H^{*}(\fg, \fl)$ with respect to the Schubert
basis $\gre_w := G_w(1)$.  In other words,
\begin{equation} \label{e.epsilonconstants}
\gre_u \gre_v = \sum_w c_{uv}^w \gre_w.
\end{equation}

\begin{Def} \label{d.global}
Write $\gre_w(t)$ for the section $[G_w]$ of $\chstar$ on
$\C^m$.  We refer to the $\gre_w(t)$ as global Schubert
classes.
\end{Def}

Hence, $\gre_w(1) = \gre_w \in \chstar_1 \cong H^*(G/P)$.

Theorem \ref{t.main1} implies that $\chstar(\C^m)$ 
is a free $\C[t_1, \dots, t_m]$-module with basis $\gre_w(t)$.  
As noted in Section \ref{s.rellie}, $\chstar(\C^m)$ is a ring.  Therefore
we can write
$$
\gre_u(t)\gre_v (t)= \sum_w C_{uv}^w (t)\gre_w(t),
$$
where each $C_{uv}^w(t)$ is a regular function on $\C^m$.

\begin{Thm} \label{t.mult}
Let $u,v$ and $w$ be in $W$. 
The multiplication in the ring $\chstar(\C^m)$ is given
by
\begin{equation} \label{e.mult}
\gre_u (t) \gre_v(t) = \sum_w c_{uv}^w \frac{F_w(t)}{F_u(t) F_v(t)}
\gre_w(t).
\end{equation}
In other words,
$$
C_{uv}^w(t) = \frac{F_w(t)}{F_u(t) F_v(t)} c_{uv}^w.
$$
\end{Thm}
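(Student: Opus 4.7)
The plan is to exploit $Z$-equivariance to transport the ring structure at $t=1$ (which is the classical Schubert calculus on $H^*(G/P)$) to arbitrary $s \in Z$, and then to extend to all of $\C^m$ by density. First, I would note that the wedge product on $\bigwedge \fr$ restricts to a fixed graded-commutative ring structure on $C^{\cdot}(\fr)$, independent of $t$, and that each $f_t : C^{\cdot}(\fr) \to C^{\cdot}(\fg_t, \fl_{\gD})$ is a DGA isomorphism (it is induced by a linear isomorphism of the underlying vector spaces, and by construction $d_t = f_t^{-1} \circ d_{\fg_t} \circ f_t$). Consequently the product induced on each fiber $\chstar_t$ is identified with the usual cup product on $H^*(\fg_t, \fl_{\gD})$. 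Moreover, for each $s \in Z$, the map $\gG_s$ extends a linear automorphism of $\fr$ to a ring automorphism of $C^{\cdot}(\fr)$; combined with Corollary \ref{c.cohisom}, it yields a graded ring isomorphism
$$\gG_s : H^*(C^{\cdot}(\fr), d_t) \longrightarrow H^*(C^{\cdot}(\fr), d_{st}).$$

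Next, I would use the fact that at $t = 1$, $\gre_w(1) = G_w(1) = s_w$ is, via $f_1$, a representative of the Schubert class $\gre_w \in H^*(G/P)$ by Theorem \ref{t.koskoskumar}. Thus equation \eqref{e.epsilonconstants} translates into
$$G_u(1) \cdot G_v(1) = \sum_w c_{uv}^w\, G_w(1) \qquad \text{in } H^*(C^{\cdot}(\fr), d_1).$$
Applying the ring isomorphism $\gG_s$ for $s \in Z$ and invoking Lemma \ref{l.gammatlambdaw} (with $t = 1$) to rewrite $\gG_s(G_w(1)) = F_w(s)\,G_w(s)$, I obtain
$$F_u(s) F_v(s)\, G_u(s) \cdot G_v(s) = \sum_w c_{uv}^w F_w(s)\, G_w(s) \qquad \text{in } H^*(C^{\cdot}(\fr), d_s).$$
Since $F_u(s), F_v(s) \neq 0$ for $s \in Z$, division establishes formula \eqref{e.mult} at every point of $Z$.

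Finally, I would promote this from $Z$ to all of $\C^m$ by a density argument. Theorem \ref{t.main1} gives that $\chstar(\C^m)$ is a free $\C[t_1,\dots,t_m]$-module with basis $\{\gre_w(t)\}_{w \in W^P}$, so there exist unique regular functions $C_{uv}^w \in \C[t_1,\dots,t_m]$ with $\gre_u(t) \gre_v(t) = \sum_w C_{uv}^w(t)\, \gre_w(t)$. The previous step shows $C_{uv}^w(s) F_u(s) F_v(s) = c_{uv}^w F_w(s)$ for every $s \in Z$, and since $Z$ is dense in $\C^m$ this polynomial identity holds on all of $\C^m$. Consequently $F_u F_v$ divides $c_{uv}^w F_w$ in $\C[t_1,\dots,t_m]$ whenever $c_{uv}^w \neq 0$, and we recover \eqref{e.mult}; in particular, the regularity statement of Belkale-Kumar falls out without any recourse to geometric invariant theory. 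The main subtlety is the compatibility in the first paragraph: one must justify that the fiberwise wedge products actually glue to the global algebra structure on $\chstar(\C^m)$; this is clear once $C^{\cdot}(\fr) \otimes \co_{\C^m}$ is viewed as a sheaf of DGAs with fixed underlying algebra and varying differential $d_t$, but it deserves explicit mention so the reader sees that $\gG_s$ really is a ring map on each cohomology fiber.
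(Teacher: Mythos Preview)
Your proposal is correct and follows essentially the same route as the paper: transport the Schubert multiplication from $t=1$ to all $s\in Z$ via the ring isomorphism $\gG_s$, then use density of $Z$ in $\C^m$ together with regularity of the structure constants $C_{uv}^w(t)$. The paper phrases the transport in terms of $\cs_w(s)=\gG_s(s_w)$ and then substitutes $\gre_w|_Z=[\cs_w|_Z/F_w]$, whereas you work directly with $G_w$ via Lemma~\ref{l.gammatlambdaw}; these are the same computation since $G_w(s)=\cs_w(s)/F_w(s)$ on $Z$.
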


\begin{proof}
Since the restriction map $\chstar(\C^m) \to \chstar(Z)$
is injective, it suffices to check that \eqref{e.mult}
holds when we replace the global sections $\gre_w$
by their restrictions to $Z$.
The map $\gG_s: H^*(C^{\cdot}(\fr), d_1) \to H^*(C^{\cdot}(\fr), d_{{s}})$
is a ring isomorphism.  By definition, if $s\in Z$, then
$\cs_w(s) = \gG_s(S_w)$.  Therefore, for all $s \in Z$,
the multiplication in $\chstar_s$ satisfies
\begin{equation} \label{e.multproof}
[\cs_u(s)][\cs_v(s)] = \sum_w c_{uv}^w [\cs_w(s)].
\end{equation}
Therefore, we have in the ring $\chstar(\C^m)$ the equation
$[\cs_u(t)][\cs_v(t)] = \sum_w c_{uv}^w [\cs_w(t)]$.
By definition, 
$$
\gre_w|_Z = \left[\frac{\cs_w|_Z}{F_w} \right]
$$
(and similarly for $\gre_u|_Z$, $\gre_u|_Z$).
Substituting in \eqref{e.multproof} yields the result.
\end{proof}

For arbitrary $u,v,w \in W^P$, $ \frac{F_w(t)}{F_u(t) F_v(t)}$
is a rational function on $\C^m$ which need not be regular
on all of $\C^m$.  However,  since the structure
constants are regular functions, we have the following corollary.

\begin{Cor} \label{c.globalfunction}
If $c_{uv}^w \neq 0$, then $ \frac{F_w(t)}{F_u(t) F_v(t)}$ is
a regular function on $\C^m$.
\end{Cor}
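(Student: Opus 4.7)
The plan is to extract the corollary directly from Theorem \ref{t.mult} together with the module structure established in Theorem \ref{t.main1}. By Theorem \ref{t.main1}, the global sections $\gre_w(t)$, $w \in W^P$, form a free basis of $\chstar(\C^m)$ as a $\C[t_1, \ldots, t_m]$-module. Since $\chstar$ is a sheaf of rings (by Remark \ref{r.sheafhi} together with the cup product on relative Lie algebra cohomology), the product $\gre_u(t) \gre_v(t)$ lies in $\chstar(\C^m)$, and expanding it in the basis $\{\gre_w(t)\}$ produces structure constants $C_{uv}^w(t)$ which are \emph{a priori} regular functions on $\C^m$.

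First I would invoke Theorem \ref{t.mult}, which identifies these structure constants as
\[
C_{uv}^w(t) = c_{uv}^w \cdot \frac{F_w(t)}{F_u(t) F_v(t)}.
\]
Under the hypothesis that $c_{uv}^w \ne 0$, I would then simply divide: the rational function
\[
\frac{F_w(t)}{F_u(t) F_v(t)} = \frac{1}{c_{uv}^w}\, C_{uv}^w(t)
\]
is the product of a nonzero scalar with a regular function on $\C^m$, hence is itself regular on $\C^m$.

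There is essentially no obstacle here, since the corollary is a formal consequence of Theorem \ref{t.mult}. The entire content of the statement is already built into the fact that the ring $\chstar(\C^m)$ was constructed intrinsically (via relative Lie algebra cohomology and the global basis $\gre_w(t)$) without any a priori knowledge that the ratios $F_w/(F_u F_v)$ extend across the subvariety $\{F_u F_v = 0\}$ of $\C^m$. Thus the corollary may be viewed as the Belkale--Kumar regularity statement recovered as a byproduct of our cohomological construction, bypassing the geometric invariant theory arguments of \cite{BeKu:06}.
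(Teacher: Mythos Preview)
Your proposal is correct and follows essentially the same approach as the paper: the corollary is deduced immediately from Theorem \ref{t.mult} by observing that the structure constants $C_{uv}^w(t)$ in the free basis $\{\gre_w(t)\}$ are automatically regular on $\C^m$, so when $c_{uv}^w \neq 0$ the ratio $F_w/(F_u F_v)$ equals $(c_{uv}^w)^{-1} C_{uv}^w(t)$ and is therefore regular.
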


This corollary was originally proved by Belkale and Kumar
using geometric invariant theory.

\subsection{The Belkale-Kumar product}
In this section, we show that the product on $\chstar(\C^m)$
coincides with
the Belkale-Kumar deformed cup product after reindexing.

For $w\in W^P$, let $w^* = w_0 w w_{0,P} \in W^P$
(see Remark \ref{r.klm}).
Following Belkale and Kumar, we reindex by setting
 $\gL_w(t) = \gre_{w^*}(t)$ for $w\in W^P$, and we
let $\gL_w = \gL_w(1)$.
The multiplication defined by Belkale and Kumar on
$H^*(G/P)$ is as follows.   Recall that
$\{ \ga_1, \cdots, \ga_n \}$ denotes the simple
roots, ordered as in Section \ref{s.subfamily}
so that $\ga_1, \dots, \ga_m \in R^+(\fu)$ and $\ga_{m+1}, \dots, \ga_n \in R^+(\fl)$.
Let $\{x_1, \ldots, x_n \}$ be the dual basis
of $\fh^*$.  
Let $\tau_1, \ldots, \tau_m$ be indeterminates.
They define $\chi_w \in \fh^*$ by
\begin{equation} \label{e.chiwdef}
\chi_w = \sum_{\gb \in R^+(\fu) \cap w^{-1} R^+} \gb.
\end{equation}
For $u, v, w \in W^P$, define integers $d_{uv}^w$ by
$$ 
\gL_u \gL_v = \sum_{w\in W^P} d_{uv}^w \gL_w.
$$
Using equation \eqref{e.epsilonconstants}, we 
see $d_{uv}^w = c_{u^* v^*}^{w^*}$.
Belkale-Kumar define a $ \Z[\tau_1, \ldots, \tau_m]$-linear
product $\odot$ on the
ring $H^*(G/P) \otimes \Z[\tau_1, \ldots, \tau_m]$
by the rule
\begin{equation} \label{e.BK}
\gL_u \odot \gL_v = \sum_w (\prod_{i=1}^m \tau_i^{(\chi_w - (\chi_u + \chi_v))(x_i)})
d_{uv}^w \gL_w.
\end{equation}

For $w\in W^P$, let 
$$
\eta_w = \sum_{\gb \in R^+(\fu) \cap w^{-1} R^-} \gb.
$$

\begin{Lem} \label{l.chiveta}
For $w\in W^P$, $\chi_w = w_{0,P}(\eta_{w^*}).$
\end{Lem}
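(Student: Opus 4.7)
\medskip

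\noindent\textbf{Proof plan for Lemma \ref{l.chiveta}.}
The plan is to expand $\eta_{w^*}$, apply $w_{0,P}$ termwise, and recognize the result as $\chi_w$ via a sign-preserving bijection on $R^+(\fu)$. First, using $w^{*} = w_0 w w_{0,P}$ and the involutive property $w_0^{-1} = w_0$, $w_{0,P}^{-1} = w_{0,P}$, I would compute
\[
(w^*)^{-1} R^- = w_{0,P} w^{-1} w_0 R^- = w_{0,P} w^{-1} R^+,
\]
so that
\[
\eta_{w^*} \;=\; \sum_{\beta \in R^+(\fu) \,\cap\, w_{0,P} w^{-1} R^+} \beta.
\]
Applying $w_{0,P}$ then gives $w_{0,P}(\eta_{w^*}) = \sum_{\beta} w_{0,P}(\beta)$ over the same index set.

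The key observation I would then establish is that $w_{0,P}$ permutes $R^+(\fu)$. This follows from the index convention of Remark \ref{r.indexconvention}: if $\beta = \sum_i n_i \alpha_i \in R^+(\fu)$, then $n_j > 0$ for some $j \in \{1,\dots,m\}$, and because $w_{0,P}$ is a product of simple reflections $s_{\alpha_i}$ with $i > m$, the action of $w_{0,P}$ leaves the coefficients $n_1,\dots,n_m$ unchanged. Hence $w_{0,P}(\beta)$ still has $n_j > 0$ and is a root, so it must be a positive root in $R^+(\fu)$. As $w_{0,P}$ is an involution, this gives a bijection $w_{0,P} \colon R^+(\fu) \to R^+(\fu)$.

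With this bijection in hand, I would use $w_{0,P} = w_{0,P}^{-1}$ to rewrite the condition $\beta \in w_{0,P} w^{-1} R^+$ as $w_{0,P}(\beta) \in w^{-1} R^+$. Combined with $\beta \in R^+(\fu) \iff w_{0,P}(\beta) \in R^+(\fu)$, this shows that the substitution $\gamma = w_{0,P}(\beta)$ yields a bijection
\[
R^+(\fu) \cap w_{0,P} w^{-1} R^+ \;\xrightarrow{\;\sim\;}\; R^+(\fu) \cap w^{-1} R^+.
\]
Therefore
\[
w_{0,P}(\eta_{w^*}) \;=\; \sum_{\gamma \in R^+(\fu) \,\cap\, w^{-1} R^+} \gamma \;=\; \chi_w,
\]
which is exactly the claim. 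There is no real obstacle here; the only point requiring care is the preservation of $R^+(\fu)$ under $w_{0,P}$, and that is immediate from the coefficient argument above.
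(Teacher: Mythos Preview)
Your proof is correct and follows essentially the same approach as the paper's: the paper's one-line argument is that $w_{0,P}(R^+(\fu)) = R^+(\fu)$ implies $R^+(\fu) \cap (w^*)^{-1}(R^-) = w_{0,P}^{-1}\bigl(R^+(\fu) \cap w^{-1} R^+\bigr)$, which is precisely the bijection you set up explicitly via $\gamma = w_{0,P}(\beta)$. Your version simply unpacks the details the paper leaves implicit, including the coefficient argument for why $w_{0,P}$ preserves $R^+(\fu)$.
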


\begin{proof}
Since $w_{0,P}(R^+(\fu))=R^+(\fu)$, then $R^+(\fu) \cap {w^*}^{-1}(R^-)
=w_{0,P}^{-1}(R^+(\fu) \cap w^{-1}R^+)$. The lemma follows
easily.
\end{proof}

Note that if $\eta_w = \sum_{i=1}^n k_i \ga_i$, then $F_w(t) = \prod_{i=1}^m
t_{i}^{2k_i}.$

\begin{Lem} \label{l.BK}
Define a homomorphism
$$
\phi: \Z[\tau_1, \cdots, \tau_m] \to \C[\C^m]
$$
by the rule $\phi(\tau_i) = t_i^2$.  
Then
$$
\phi(\prod_{i=1}^m \tau_i^{\chi_w(x_i)}) = F_{w^*}(t).
$$
\end{Lem}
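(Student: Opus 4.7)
The plan is to unwind both sides in terms of coordinates in the basis of simple roots and exploit Lemma \ref{l.chiveta} to transfer the computation from $\chi_w$ to $\eta_{w^*}$, where the formula $F_w(t) = \prod_{i=1}^m t_i^{2k_i}$ (stated just before the lemma) applies directly. First I would note that since $\{x_1,\dots,x_n\}$ is the basis of $\fh$ dual to the simple roots $\{\ga_1,\dots,\ga_n\}$, evaluating $\chi_w$ on $x_i$ simply picks off the coefficient of $\ga_i$ in the expansion $\chi_w = \sum_i \chi_w(x_i)\,\ga_i$. Therefore $\phi\bigl(\prod_{i=1}^m \tau_i^{\chi_w(x_i)}\bigr) = \prod_{i=1}^m t_i^{2\chi_w(x_i)}$, and it suffices to prove that for each $i=1,\dots,m$ the coefficient of $\ga_i$ in $\chi_w$ equals the coefficient of $\ga_i$ in $\eta_{w^*}$.

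By Lemma \ref{l.chiveta}, $\chi_w = w_{0,P}(\eta_{w^*})$, so the key point is to show that applying $w_{0,P}$ does not change the coefficients of the simple roots $\ga_1,\dots,\ga_m$ (the ones outside $R^+(\fl)$). This is the one nontrivial input. The argument is that $w_{0,P}\in W_P$ acts trivially on the center $\fz$ of $\fl$ and hence fixes $\fz^*$ pointwise; dually, the simple roots $\ga_{m+1},\dots,\ga_n$ of $\fl$ span the annihilator of $\fz$ in $\fh^*$, and $w_{0,P}$ stabilizes this subspace. Consequently, for any $\gl\in\fh^*$, the difference $w_{0,P}(\gl)-\gl$ lies in the span of $\ga_{m+1},\dots,\ga_n$. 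Applied with $\gl=\eta_{w^*}$, this gives that the $\ga_i$-coefficients of $\chi_w$ and $\eta_{w^*}$ agree for $i=1,\dots,m$.

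With this in hand the lemma falls out: if $\eta_{w^*} = \sum_{i=1}^n k_i\,\ga_i$, then $\chi_w(x_i)=k_i$ for $i\le m$, and invoking the displayed formula $F_{w^*}(t)=\prod_{i=1}^m t_i^{2k_i}$ completes the identification
\[
\phi\!\Bigl(\prod_{i=1}^m \tau_i^{\chi_w(x_i)}\Bigr) \;=\; \prod_{i=1}^m t_i^{2k_i} \;=\; F_{w^*}(t).
\]
The only mild obstacle is making the dual decomposition $\fh=\fz\oplus\fh\cap[\fl,\fl]$ precise enough to justify the invariance of the $\ga_i$-coefficients for $i\le m$ under $w_{0,P}$; everything else is bookkeeping in the simple-root basis.
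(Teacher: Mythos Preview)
Your proof is correct and follows essentially the same route as the paper: both invoke Lemma~\ref{l.chiveta} to write $\chi_w = w_{0,P}(\eta_{w^*})$ and then argue that $w_{0,P}$ does not disturb the first $m$ simple-root coefficients. The paper phrases this last step as ``$w_{0,P}$ acts trivially on $Z$'' (so the characters $e^{\chi_w}$ and $e^{\eta_{w^*}}$ agree on $Z$, forcing the first $m$ coefficients to match), whereas you work dually at the Lie-algebra level, noting that $w_{0,P}(\lambda)-\lambda$ lies in the span of $\ga_{m+1},\dots,\ga_n$; these are two sides of the same observation.
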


\begin{proof}
Let $w_{0,P}(\eta_{w^*}) = \chi_w = \sum n_i \ga_i$, so by Lemma \ref{l.chiveta},
$$
F_{w^*}(t) = \prod_{i=1}^m  t_{i}^{2n_i} = w_{0, P} \prod_{i=1}^m  t_{i}^{2n_i}
 = \phi(\prod_{i=1}^m \tau_i^{\chi_w(x_i)}),
$$
 since $w_{0,P}$ acts trivially on $Z$.
\end{proof}

The next theorem implies that after making the change
of variables $\tau_i = t_i^2$, the structure constants of
the Belkale-Kumar product
are the same as those for the product on $\chstar(\C^n)$.

\begin{Thm} \label{t.BKcomp}
Give $H^*(G/P) \otimes \Z[\tau_1, \ldots, \tau_m]$ the
Belkale-Kumar product $\odot$. 
Extend
 $\phi: \Z[\tau_1, \ldots, \tau_m] \to \C[t_1, \dots, t_m]$ 
to a map (also
denoted $\phi$)
$$
\phi: H^*(G/P) \otimes \Z[\tau_1, \ldots, \tau_m] \to \chstar(\C^m)
$$
by requiring that $\phi(\gL_w)=\gL_{w}(t)$ and $\phi(a x)=\phi(a) \phi(x)$
for $a\in \Z[\tau_1, \ldots, \tau_m]$ and $x\in H^*(G/P) \otimes \Z[\tau_1, \ldots, \tau_m]$.
Then 
\begin{equation} \label{e.homomorphism}
\phi( \prod_i \tau_i^{(\chi_w - (\chi_u + \chi_v))(x_i)})
d_{uv}^w = C_{u^* v^*}^{w^*}(t).
\end{equation}
In other words, $\phi$ is a ring homomorphism.
\end{Thm}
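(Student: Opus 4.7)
The plan is to verify the identity \eqref{e.homomorphism} directly; this will immediately imply that $\phi$ is a ring homomorphism, because $\phi$ is $\Z[\tau_1,\ldots,\tau_m]$-linear by construction, $\{\gL_w\}_{w\in W^P}$ is a basis of $H^*(G/P)\otimes\Z[\tau_1,\ldots,\tau_m]$ over $\Z[\tau_1,\ldots,\tau_m]$ with $\phi(\gL_w)=\gL_w(t)$, and $\phi$ restricted to $\Z[\tau_1,\ldots,\tau_m]$ is already a ring homomorphism. In other words, it suffices to show that $\phi(\gL_u\odot\gL_v)=\gL_u(t)\,\gL_v(t)$ for every pair $u,v\in W^P$, and matching coefficients of $\gL_w(t)$ on the two sides will give exactly \eqref{e.homomorphism}.

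First I would compute both sides of the desired equality. Applying $\phi$ to the Belkale--Kumar product formula \eqref{e.BK} and using the $\Z[\tau_i]$-linearity of $\phi$ gives
\[
\phi(\gL_u\odot\gL_v)=\sum_{w\in W^P}\phi\Bigl(\prod_{i=1}^m\tau_i^{(\chi_w-(\chi_u+\chi_v))(x_i)}\Bigr)d_{uv}^w\,\gL_w(t).
\]
On the other hand, using $\gL_w(t)=\gre_{w^*}(t)$, applying Theorem \ref{t.mult} to the product $\gre_{u^*}(t)\gre_{v^*}(t)$, and reindexing via the involution $w\mapsto w^*$ on $W^P$ (Remark \ref{r.klm}),
\[
\gL_u(t)\gL_v(t)=\sum_{w\in W^P}\frac{F_{w^*}(t)}{F_{u^*}(t)F_{v^*}(t)}\,c_{u^* v^*}^{w^*}\,\gL_w(t).
\]
Invoking the identity $d_{uv}^w=c_{u^* v^*}^{w^*}$ already recorded in the excerpt and matching coefficients of $\gL_w(t)$, the theorem reduces to the scalar equality
\[
\phi\Bigl(\prod_{i=1}^m\tau_i^{(\chi_w-(\chi_u+\chi_v))(x_i)}\Bigr)d_{uv}^w=\frac{F_{w^*}(t)}{F_{u^*}(t)F_{v^*}(t)}\,d_{uv}^w.
\]

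If $d_{uv}^w=0$ then both sides vanish, so the identity holds. Otherwise, Corollary \ref{c.globalfunction} guarantees that $F_{w^*}(t)/(F_{u^*}(t)F_{v^*}(t))$ is a regular function on $\C^m$, and I only need to verify the equality of rational functions
\[
\phi\Bigl(\prod_{i=1}^m\tau_i^{(\chi_w-(\chi_u+\chi_v))(x_i)}\Bigr)=\frac{F_{w^*}(t)}{F_{u^*}(t)F_{v^*}(t)}.
\]
This is immediate from Lemma \ref{l.BK}, which gives $\phi(\prod_i\tau_i^{\chi_w(x_i)})=F_{w^*}(t)$ and the analogous formulas with $u$ and $v$ in place of $w$, combined with the multiplicativity of $\phi$ on monomials in the fraction field of $\Z[\tau_1,\ldots,\tau_m]$.

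The argument presents no genuine obstacle; the only care needed is the bookkeeping around the reindexing $w\leftrightarrow w^*$ (which turns the $c$-constants into the $d$-constants and the $F_w$ into the $F_{w^*}$) and the invocation of Corollary \ref{c.globalfunction} to ensure that the monomials in $\tau$ with a priori possibly negative exponents make sense after one clears denominators by $F_{u^*}(t)F_{v^*}(t)$, so that what we are really checking is the polynomial identity obtained by clearing denominators.
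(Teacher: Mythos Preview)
Your proof is correct and follows essentially the same approach as the paper: both arguments reduce \eqref{e.homomorphism} to Lemma \ref{l.BK} (computing $\phi$ on the $\tau$-monomials) together with Theorem \ref{t.mult} (identifying the right-hand side with $C_{u^*v^*}^{w^*}(t)$), using the relation $d_{uv}^w=c_{u^*v^*}^{w^*}$. Your write-up is simply more explicit about the reindexing $w\leftrightarrow w^*$ and the trivial case $d_{uv}^w=0$, and you add the remark about Corollary \ref{c.globalfunction} ensuring regularity, which the paper leaves implicit.
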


\begin{proof}
By Lemma \ref{l.BK},
$$
\phi ( ( \prod_i \tau_i^{(\chi_w - (\chi_u + \chi_v))(x_i)} )
d_{uv}^w = \frac{F_{w^*}(t)}{F_{u^*}(t) F_{v^*}(t)} \cdot c_{u^* v^*}^{w^*},
$$
and this equals $C_{u^* v^*}^{w^*}$ by Theorem \ref{t.mult}.  
It follows from the definitions that $\phi(\gL_u (t) \gL_v (t) )$ equals
$\phi(\gL_u(t)) \phi(\gL_v(t))$.
Since $\phi$ takes structure constants to structure
constants, it is a ring homomorphism.
\end{proof}

\begin{Rem} \label{r.qcoh} The Belkale-Kumar family of cup products is quite
different from quantum cohomology of the flag variety. Indeed, quantum cohomology
of the flag variety adds extra nonzero terms to
 the usual cup product, 
while the Belkale-Kumar family degenerates the usual cup product to a ring
where some nonzero structure constants may become zero.
\end{Rem}

\begin{Rem}\label{r.ringnextpaper}
Let $P \subset Q$ be parabolic subgroups of $G$, and consider the 
$Q/P$-fiber bundle $\pi:G/P \to G/Q$.  It is a well-known
result of Borel that if $I$ is the
ideal of $H^*(G/P)$ generated by $\sum_{i > 0} \pi^* H^i(G/Q)$,
then $H^*(Q/P) \cong H^*(G/P)/I$ \cite[Theorem 26.1]{Borel:53}. 
 In a future paper, we
plan to establish an analogous result for the Belkale-Kumar
cup product by using the Hochschild-Serre spectral sequence
to prove a Leray-Hirsch theorem for relative Lie algebra
cohomology.
\end{Rem}

\begin{Rem} \label{r.kmhint}
In a future paper, we plan 
 to study the Kac-Moody generalization of the Belkale-Kumar
cup product.  Although the arguments used in this paper
do not directly generalize, we will establish analogous
results in the Kac-Moody case using the Hochschild-Serre
spectral sequence.  
The Kac-Moody generalization of the Belkale-Kumar cup product
was 
used by Kumar in his proof of the Cachazo-Douglas-Seiberg-Witten conjecture
in the very interesting paper \cite{Ku:08}.
\end{Rem}

\begin{Rem} \label{r.producttK}
For $t=(t_1, \dots, t_m)$, let $J(t)=\{ 1 \le i \le m : t_i \not= 0 \}$.
Recall the subalgebras $\fg_{p_J}$ from Proposition \ref{p.iso3}.
By Remark \ref{r.gtisom} and Corollary \ref{c.cohisom}, 

\begin{equation} \label{e.fgtvsftpt}
\fg_t \cong \fg_{p_{J(t)}} \ {\rm and } \  H^*(\fg_t, \fl_{\gD}) \cong
H^*(\fg_{p_{J(t)}}, \fl_{\gD}).
\end{equation}
 Therefore, in order to understand the family of cup products,
it suffices to compute
$H^*(\fg_{p_J}, \fl_{\gD})$ for each $J \subset \{ 1, \dots, m \}$.
\end{Rem}

\section{Disjointness of $\pa$ and $d_t$} \label{s.disjoint}

The results of \cite{Kos:63} are based on the proof of the remarkable fact
that $\pa$ and $d_1$ are disjoint. This gives an identification
between $H_*(C^{\cdot}(\fr), \pa)$ and $H^*(C^{\cdot}(\fr), d_1)$,
and leads to the approach of Kostant and Kumar to Schubert calculus for
the flag variety of a symmetrizable Kac-Moody Lie algebra \cite{KoKu:86}.
In this section, we show that our methods imply that $\pa$ and $d_t$
are disjoint for all $t\in \C^m$.

\begin{Def} \label{d.disjointness} (\cite[2.1]{Kos:61})
Let $C^{\cdot}$ be a finite dimensional graded vector space
with linear maps $d:C^{\cdot} \to C^{\cdot + 1}$ and $\gd:C^{\cdot} \to C^{\cdot - 1}$
 of degree $1$ and $-1$ respectively. We say that
$d$ and $\gd$ are {\it disjoint} if 
$$
\im(d) \cap \ker(\gd) = \im(\gd) \cap \ker(d) = 0.
$$
\end{Def}

\begin{Prop} \label{p.disjointkos} \cite[Proposition 2.1]{Kos:61}
Let $d$ and $\gd$ be disjoint operators on the complex $C^{\cdot}$
and suppose $d^2={\gd}^2 = 0$. Let $S = d\gd + \gd d$.  Then
\par\noindent (1) $\ker(S) = \ker(d) \cap \ker(\gd)$.
\par\noindent (2) The natural maps $\ker(S) \to H^*(C^{\cdot}, d)$
and $\ker(S) \to H_*(C^{\cdot}, \gd)$ are isomorphisms.
\end{Prop}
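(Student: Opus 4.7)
The plan is to establish part (1) by two applications of the disjointness hypothesis, and then deduce part (2) via a finite-dimensional Hodge-style decomposition of $C^{\cdot}$.

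For part (1), the inclusion $\ker(d) \cap \ker(\gd) \subseteq \ker(S)$ is immediate from $S = d\gd + \gd d$. For the reverse, I would take $x$ with $Sx = 0$, so $d\gd x = -\gd d x$. Applying $d$ and using $d^2 = 0$ gives $d\gd d x = 0$, hence $\gd d x \in \ker(d) \cap \im(\gd) = 0$ by disjointness; the identity $d\gd x = -\gd d x$ then forces $d\gd x = 0$ as well. Now $dx \in \im(d) \cap \ker(\gd) = 0$, so $dx = 0$, and symmetrically $\gd x = 0$.

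For part (2), the key step is a dimension count. Setting $a = \dim \im(d)$ and $b = \dim \im(\gd)$, the disjointness relation $\im(d) \cap \ker(\gd) = 0$ forces $a + (\dim C^{\cdot} - b) \le \dim C^{\cdot}$, i.e., $a \le b$, and the symmetric inequality gives $a = b$. Combining this equality with disjointness yields the direct-sum decompositions
\begin{equation*}
C^{\cdot} = \im(d) \oplus \ker(\gd) = \im(\gd) \oplus \ker(d).
\end{equation*}
I would then intersect the first decomposition with $\ker(d)$: for $x \in \ker(d)$, writing $x = y + z$ with $y \in \im(d) \subseteq \ker(d)$ and $z \in \ker(\gd)$ gives $dz = dx - dy = 0$, so $z \in \ker(d) \cap \ker(\gd) = \ker(S)$ by part (1). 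Since $\im(d) \cap \ker(S) \subseteq \im(d) \cap \ker(\gd) = 0$, this yields $\ker(d) = \im(d) \oplus \ker(S)$, proving that the natural map $\ker(S) \to H^*(C^{\cdot}, d)$ is an isomorphism. The argument for $H_*(C^{\cdot}, \gd)$ is identical with the roles of $d$ and $\gd$ reversed.

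The main point to track carefully is the dimension equality $\dim \im(d) = \dim \im(\gd)$, since without it the disjointness would produce direct sums but not the complementary decompositions of $C^{\cdot}$. Finite-dimensionality of $C^{\cdot}$ is essential precisely here, playing the role that in the classical Hodge-theoretic setting would be filled by a Laplacian together with an adjoint pairing.
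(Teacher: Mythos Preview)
Your proof is correct. The paper does not supply its own proof of this proposition; it is stated with a citation to Kostant \cite[Proposition 2.1]{Kos:61} and used as a black box, so there is nothing to compare against beyond noting that your argument is the standard finite-dimensional Hodge-type one and is complete as written.
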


We will give a proof of the following result at the end of this
section.

\begin{Thm} \label{t.disjointness}
The linear maps $d_t$ and $\pa$ of $C^{\cdot}(\fr)$ are disjoint
for all $t\in \C^m$.
\end{Thm}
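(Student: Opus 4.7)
The plan is to deduce the general statement from Kostant's classical disjointness of $d_1$ and $\pa$ proved in \cite{Kos:63}, combined with the $Z$-equivariance developed in Section \ref{s.rellie}. First, for $s \in Z$, Proposition \ref{p.differentials} gives $d_s = \gG_s \circ d_1 \circ \gG_{s^{-1}}$, and by Remark \ref{r.gammasaction} the operator $\gG_s$ commutes with $\pa$. Consequently $\gG_s$ restricts to a linear isomorphism $\im(d_1) \cap \ker(\pa) \to \im(d_s) \cap \ker(\pa)$, and similarly carries $\ker(d_1) \cap \im(\pa)$ onto $\ker(d_s) \cap \im(\pa)$. Kostant's disjointness at $t = 1$ therefore immediately yields disjointness of $d_t$ and $\pa$ for every $t$ in the open orbit $Z \subset \C^m$.

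To extend to all of $\C^m$, I would invoke upper semicontinuity. By Theorem \ref{t.rkhconstant} and Remark \ref{r.kerdtcon}, $\dim \ker(d_t)$ and $\dim \im(d_t)$ are constant in $t$, so $\ker(d_t)$ and $\im(d_t)$ are the fibers of algebraic vector subbundles of the trivial bundle $C^{\cdot}(\fr) \times \C^m$. Consequently the functions
$$
f(t) := \dim(\im(d_t) \cap \ker(\pa)), \qquad g(t) := \dim(\ker(d_t) \cap \im(\pa))
$$
are upper semicontinuous on $\C^m$. They are $Z$-invariant by the same $\gG_s$-argument and vanish on $Z$ by the previous paragraph. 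Hence $\{f \geq 1\}$ and $\{g \geq 1\}$ are $Z$-invariant closed subvarieties of $\C^m$ disjoint from $Z$. Any nonempty such subvariety must contain $0 \in \C^m$, since $0$ lies in the closure of every $Z$-orbit on $\C^m$. So it suffices to prove the single case $f(0) = g(0) = 0$, that is, disjointness of $d_0$ and $\pa$.

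For this remaining case I would exploit the tensor-product structure at $t = 0$. By Remark \ref{r:fgspecial}, $\fg_0/\fl_{\gD} = \fu_-' \oplus \fu''$ is a direct sum of Lie algebras (since $[\fu_-', \fu''] = 0$ in $\fg \times \fg$), so $(C^{\cdot}(\fg_0, \fl_{\gD}), d_{\fg_0})$ is the $L$-invariant part of a tensor product of Chevalley--Eilenberg complexes for $\fu_-$ and $\fu$. Dually, since $\fr = \fu' \oplus \fu_-''$ is itself a direct sum of Lie algebras, $(C^{\cdot}(\fr), \pa)$ factors analogously. Transporting $d_0$ to $\bigwedge \fr$ via $f_0$ together with the Killing form identifications $\fu' \cong (\fu_-')^*$ and $\fu_-'' \cong (\fu'')^*$, one checks on root vectors (using the normalization $(e_{\ga}, e_{-\ga}) = 1$ that makes $\{e_\ga', e_{-\ga}''\}$ orthonormal for the Hermitian form of Section \ref{s.globalcoh}) that $d_0$ coincides, up to sign, with the Hermitian adjoint $\pastar$. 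Disjointness of $d_0$ and $\pa$ then follows from the standard Hodge-theoretic identity $\langle L_{\fr} u, u\rangle = \|\pa u\|^2 + \|\pastar u\|^2$, which forces $\ker(\pa) \cap \im(\pastar) = \ker(\pastar) \cap \im(\pa) = 0$.

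The hard part will be the identification of $d_0$ with $\pastar$ in the last step. The Killing form dualization built into $f_0$ swaps the $\fu$- and $\fu_-$-parts, sending $\fu'$ to $(\fu_-')^*$ and $\fu_-''$ to $(\fu'')^*$, so matching $d_0 = f_0^{-1} \circ d_{\fg_0} \circ f_0$ with the Hermitian adjoint $\pastar$ (rather than with some other natural operator such as $\pm \pa$ itself) requires careful bookkeeping of Kostant's sign and normalization conventions from \cite{Kos:63} and Section \ref{s.globalcoh}.
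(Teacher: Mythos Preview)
Your argument is correct and follows the paper's proof at both endpoints $t=1$ and $t=0$: the $t\in Z$ case via $\gG_s$-conjugacy is exactly the paper's Lemma \ref{l.zdisjoint}, and your identification of $d_0$ with $\pm\pastar$ is precisely the content of the $t=0$ half of Theorem \ref{t.disjointkostant} (the paper records this as $\pastar = -\pa^{tr}$ together with $\pa^{tr}=d_0$, which makes the bookkeeping you worry about painless).

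Where you differ is in the passage from $Z\cup\{0\}$ to all of $\C^m$. The paper works through the Laplacian $S_t = d_t\pa + \pa d_t$: it shows $\dim\ker S_t$ is constant (Proposition \ref{p.constantharmonic}), that $\ker S_t\subset\ker d_t\cap\ker\pa$ via a Grassmannian closure argument (Proposition \ref{p.harmonicclosed}), and then uses the explicit global Schubert cocycles $G_w(t)$ to verify that $\ker S_t\to H^*(C^\cdot(\fr),d_t)$ is an isomorphism, finally invoking the converse criterion of Lemma \ref{l.conversedisjoint}. Your route is more direct: once $\rank d_t$ is constant (Remark \ref{r.kerdtcon}), both $\ker d_t$ and $\im d_t$ are subbundles of the trivial bundle, so the intersection dimensions $f(t)=\dim(\im d_t\cap\ker\pa)$ and $g(t)=\dim(\ker d_t\cap\im\pa)$ are upper semicontinuous; $Z$-invariance plus the fact that $0$ lies in every $Z$-orbit closure then reduces everything to $t=0$. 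This bypasses the Laplacian, the converse lemma, and any appeal to the global sections $G_w(t)$. The paper's longer route has the side benefit of exhibiting $\ker S_t$ explicitly as the span of the $G_w(t)$, which is of independent interest for the harmonic theory; your argument trades that information for brevity.
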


We first establish the converse to Proposition \ref{p.disjointkos}.

\begin{Lem} \label{l.conversedisjoint}
 Let $C^{\cdot}$ be a finite dimensional graded vector
space with linear maps $d$ and $\gd$ of degree $+1$ and $-1$ respectively,
and suppose $d^2 = {\gd}^2 = 0$. Assume
\par\noindent (1) $\ker(S) \subset \ker(d)$ and $\ker(S) \subset \ker(\gd)$;
\par\noindent (2) The induced quotient maps $\psi^*:\ker(S) \to H^*(C^{\cdot}, d)$
and $\psi_*:\ker(S) \to H_*(C^{\cdot}, \gd)$ are isomorphisms.
\par\noindent Then $d$ and $\gd$ are disjoint.
\end{Lem}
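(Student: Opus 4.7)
The plan is to verify the two disjointness conditions $\im(d) \cap \ker(\gd) = 0$ and $\im(\gd) \cap \ker(d) = 0$ directly, reducing each to the injectivity half of hypothesis~(2). The key elementary observation I would record first is that $\ker(d) \cap \ker(\gd) \subseteq \ker(S)$: if $dx = \gd x = 0$, then $Sx = d\gd x + \gd d x = 0$. Combined with hypothesis~(1), this gives the equality $\ker(S) = \ker(d) \cap \ker(\gd)$.

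Next I would unpack what the isomorphisms $\psi^*$ and $\psi_*$ tell us about $\ker(S)$. Because $\ker(S) \subseteq \ker(d)$ by~(1), the map $\psi^*$ is literally the composite of the inclusion $\ker(S) \hookrightarrow \ker(d)$ with the quotient $\ker(d) \twoheadrightarrow H^*(C^{\cdot}, d)$; its injectivity is therefore equivalent to $\ker(S) \cap \im(d) = 0$. Symmetrically, the injectivity of $\psi_*$ is equivalent to $\ker(S) \cap \im(\gd) = 0$.

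With these two intersection statements in hand, each disjointness assertion follows in one step. Given $x \in \im(d) \cap \ker(\gd)$, one has $x \in \im(d) \subseteq \ker(d)$ (using $d^2 = 0$), so $x \in \ker(d) \cap \ker(\gd) = \ker(S)$, and hence $x \in \ker(S) \cap \im(d) = 0$. An entirely symmetric argument, using $\gd^2 = 0$ and the injectivity of $\psi_*$, handles $\im(\gd) \cap \ker(d)$.

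I do not anticipate any real obstacle: the lemma is a clean formal converse to Proposition~\ref{p.disjointkos}, and the only conceptual content is the identity $\ker(S) = \ker(d) \cap \ker(\gd)$, which is the algebraic shadow of a Hodge-type orthogonal decomposition. The argument is purely formal and uses the hypotheses in the most direct way possible.
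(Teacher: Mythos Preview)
Your proof is correct and follows essentially the same approach as the paper's: show that any $y \in \im(d) \cap \ker(\gd)$ lies in $\ker(S)$, then use the injectivity of $\psi^*$ (i.e., $\ker(S) \cap \im(d) = 0$) to conclude $y = 0$. The paper's version is slightly terser, computing $S(y) = 0$ directly rather than first recording the equality $\ker(S) = \ker(d) \cap \ker(\gd)$, but the logic is identical.
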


\begin{proof}
Let $y \in \im(d) \cap \ker(\gd)$. Then
$S(y) = d \gd (y) + \gd d (y) = 0$. Thus, $y \in \ker(S)$, so since the cohomology class
$[y] \in H^*(C^{\cdot}, d)$ is $0$, it follows that $y=0$ from assumption (2).
This proves $\im(d) \cap \ker(\gd)=0$, and a similar argument shows that
$\im(\gd) \cap \ker(d) = 0$.
\end{proof}

\begin{Thm} \label{t.disjointkostant}(see \cite{Kos:63})
On $C^{\cdot}(\fr)$, the operators $d_1$ and $\pa$ are disjoint, and
the operators $d_0$ and $\pa$ are disjoint.
\end{Thm}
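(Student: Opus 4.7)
The natural strategy is to apply Lemma \ref{l.conversedisjoint}: to establish that $d$ and $\pa$ are disjoint on $C^{\cdot}(\fr)$, it suffices to exhibit a subspace $\cv$ with $\cv \subseteq \ker(d) \cap \ker(\pa)$, whose induced quotient maps to both $H^*(C^{\cdot}(\fr), d)$ and $H_*(C^{\cdot}(\fr), \pa)$ are isomorphisms, and then to argue that $\ker(S) \subseteq \cv$ where $S = d\pa + \pa d$. In both cases, $\cv$ will be the span of Kostant's distinguished representatives indexed by $W^P$, and in both cases the dimension match $\dim \cv = |W^P| = \dim H^*(\fg, \fl) = \dim H_*(\fu \oplus \fu_-)^L$, coming from Theorem \ref{t.kostant61} and equation \eqref{e.coh1iscoh0}, will drive the argument.

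For the case $d = d_0$, my plan is to take $\cv = \mathrm{span}\{k^w : w \in W^P\}$, with $k^w$ as in Theorem \ref{t.kostantbwpaper}. These are $d_0$-closed by construction, and their classes form a basis of $H^*(C^{\cdot}(\fr), d_0)$. Under the Hermitian form $\{\cdot,\cdot\}$ introduced in Section \ref{s.globalsch}, for which the $e(B_1, B_2)$ give an orthonormal basis, the differential $d_0$ becomes (up to a normalizing constant) the Hermitian adjoint $\pastar$ of the homology differential $\pa$; this reflects the classical fact that, under the Killing form identification of $\fr^{op\,*}$ with $\fr$, the Chevalley--Eilenberg cohomology differential of $\fr^{op} \cong \fu \oplus \fu_-$ is the Hodge-adjoint of the homology differential. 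Consequently $S = d_0 \pa + \pa d_0 = \pa\pastar + \pastar\pa = L_\fr$ is the Laplacian, and the standard Hodge decomposition on the finite-dimensional complex yields $\ker(S) = \ker(\pa) \cap \ker(\pastar) = \ker(\pa) \cap \ker(d_0)$ together with isomorphisms $\ker(S) \cong H^*(C^{\cdot}(\fr), d_0)$ and $\ker(S) \cong H_*(C^{\cdot}(\fr), \pa)$. Lemma \ref{l.conversedisjoint} then delivers the disjointness of $d_0$ and $\pa$.

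For the case $d = d_1$, my plan is to take $\cv = \mathrm{span}\{s_w : w \in W^P\}$, with $s_w = (1 - R)^{-1} k^w$ as in Theorem \ref{t.koskoskumar}. Each $s_w$ is $d_1$-closed by that theorem, and the classes $\{[s_w]\}$ form a basis of $H^*(C^{\cdot}(\fr), d_1) \cong H^*(\fg, \fl)$ corresponding to the Schubert basis of $H^*(G/P)$. The deep additional input, which is the content of \cite{Kos:63}, is that the $s_w$ are \emph{simultaneously} $\pa$-closed and that their homology classes give a basis of $H_*(C^{\cdot}(\fr), \pa)$. Once these two facts are granted, the chain $\cv \subseteq \ker(d_1) \cap \ker(\pa) \subseteq \ker(S)$ combined with the isomorphism $\cv \to H^*(C^{\cdot}(\fr), d_1)$ forces $\ker(S) = \cv$ by the dimension count $|W^P|$ (exactly as in the $d_0$ case), and Lemma \ref{l.conversedisjoint} applies.

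The main obstacle lies in the $d_1$ case: the operator $R = -L_0 E$ does not commute with $\pa$ in an obvious way, so it is not transparent that $s_w = (1 - R)^{-1} k^w$ remains $\pa$-closed even though $k^w$ does. Kostant's proof in \cite{Kos:63} surmounts this through a careful bigraded harmonic analysis, comparing the Laplacians for $d_1$ and $\pa$ on $L$-invariants and showing they share a common kernel; for our purposes this is invoked as a black box.
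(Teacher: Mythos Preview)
Your treatment of the $d_0$ case is essentially the paper's argument, but wrapped in unnecessary machinery. The paper observes directly that $\pastar = -\pa^{tr}$ and $\pa^{tr} = d_0$, so $d_0 = -\pastar$; since the Hermitian form $\{\cdot,\cdot\}$ is positive definite, $\pa$ and $\pastar$ are disjoint, hence so are $\pa$ and $d_0$. That is the whole proof. Once you have $d_0 = -\pastar$, you do not need $\cv$, the $k^w$, or Lemma~\ref{l.conversedisjoint} at all: positive definiteness alone gives $\im(\pa)\cap\ker(\pastar)=0$ and $\im(\pastar)\cap\ker(\pa)=0$ in one line.

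For the $d_1$ case there is a genuine gap. The paper does not attempt an argument here; it simply cites \cite[Theorem~4.5]{Kos:63}, which \emph{is} the assertion that $d_1$ and $\pa$ are disjoint. Your proposed route through Lemma~\ref{l.conversedisjoint} breaks at the step ``forces $\ker(S)=\cv$ by the dimension count $|W^P|$ (exactly as in the $d_0$ case).'' In the $d_0$ case the bound $\dim\ker(S)\le |W^P|$ came from the fact that $S=L_{\fr}$ is the Laplacian of a positive definite Hermitian form, so Hodge theory applies. For $t=1$ the operator $S=d_1\pa+\pa d_1$ is \emph{not} such a Laplacian, and you have no a~priori upper bound on $\dim\ker(S)$; the inclusion $\cv\subseteq\ker(S)$ alone does not force equality, so the hypotheses of Lemma~\ref{l.conversedisjoint} are not verified. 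Worse, the ``black-box'' facts you invoke from \cite{Kos:63}---that each $s_w$ lies in $\ker(\pa)$ and that the $[s_w]$ form a basis of $H_*(C^{\cdot}(\fr),\pa)$---are established there \emph{after} and \emph{using} Theorem~4.5 (via the mechanism of Proposition~\ref{p.disjointkos}). So even if your deduction could be repaired, it would be circular: you would be deriving disjointness from consequences of disjointness. The honest statement is simply that the $d_1$ case is \cite[Theorem~4.5]{Kos:63}.
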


\begin{proof}
The case when $t=1$ is the assertion of \cite[Theorem 4.5]{Kos:63}.
For the case when $t=0$, 
let $\pastar$ denote the Hermitian adjoint of the operator $\pa$
on $C^{\cdot}(\fr)$ with respect to the 
the positive definite Hermitian form
$\{ \cdot, \cdot \}$ defined after Lemma \ref{l.gammafb}. 
 Because the form is positive definite,
$\pa$ and $\pastar$ are disjoint. 
Consider the symmetric bilinear form $( \cdot, \cdot )$ on $C^{\cdot}(\fr)$ given
by extending the Killing form on $\fr$, as in \cite[equation 3.2.1]{Kos:61}, and let
$\pa^{tr}$ be the transpose of $\pa$ with respect to $( \cdot, \cdot)$. Then
$\pastar=-\pa^{tr}$ by \cite[2.6.5]{Kos:63}. It follows from definitions that
 $\pa^{tr} = d_0$. Thus $d_0$ and $-\pa$ are disjoint, so $d_0$ and
$\pa$ are disjoint.
\end{proof}

\begin{Lem} \label{l.zdisjoint}
For $s\in Z$, the operators $d_s$ and $\pa$ are disjoint.
\end{Lem}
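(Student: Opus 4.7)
The plan is to reduce the case of general $s \in Z$ to the already-established case $t=1$ of Theorem \ref{t.disjointkostant} by conjugating everything by the automorphism $\gG_s$.

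First I would record the two key compatibilities. By Proposition \ref{p.differentials} applied with $t=1$, we have the identity
\begin{equation*}
d_s \;=\; \gG_s \circ d_1 \circ \gG_{s^{-1}}
\end{equation*}
for all $s \in Z$. On the other hand, by Remark \ref{r.gammasaction}, $\gG_s$ is a Lie algebra automorphism of $\fr$ and so commutes with the Lie algebra homology differential, that is,
\begin{equation*}
\pa \;=\; \gG_s \circ \pa \circ \gG_{s^{-1}}.
\end{equation*}

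Next, using that $\gG_s$ is a vector space automorphism of $C^{\cdot}(\fr)$, these identities give
\begin{align*}
\ker(d_s) &= \gG_s(\ker(d_1)), & \im(d_s) &= \gG_s(\im(d_1)), \\
\ker(\pa) &= \gG_s(\ker(\pa)), & \im(\pa) &= \gG_s(\im(\pa)).
\end{align*}
Since $\gG_s$ is a bijection that preserves intersections, I would then compute
\begin{equation*}
\im(d_s) \cap \ker(\pa) \;=\; \gG_s\bigl(\im(d_1) \cap \ker(\pa)\bigr) \;=\; \gG_s(0) \;=\; 0,
\end{equation*}
and symmetrically $\im(\pa) \cap \ker(d_s) = \gG_s(\im(\pa) \cap \ker(d_1)) = 0$, where both vanishing statements use the $t=1$ disjointness provided by Theorem \ref{t.disjointkostant}. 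By Definition \ref{d.disjointness}, this says exactly that $d_s$ and $\pa$ are disjoint.

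There is no real obstacle here: the entire argument is a transport-of-structure via $\gG_s$, and the only things one must check are already recorded in the paper (the conjugation formula for $d_s$ in Proposition \ref{p.differentials}, the commutativity of $\gG_s$ with $\pa$ in Remark \ref{r.gammasaction}, and the base case in Theorem \ref{t.disjointkostant}). The mild point worth emphasizing in the write-up is that the argument works for $s \in Z$ precisely because $\gG_s$ is defined and invertible there; extending to general $t \in \C^m$ in Theorem \ref{t.disjointness} will require the continuity/semicontinuity argument that comes next, not just conjugation.
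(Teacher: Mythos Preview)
Your proof is correct and is essentially the same argument as the paper's: both reduce to the case $t=1$ of Theorem \ref{t.disjointkostant} by using Proposition \ref{p.differentials} and Remark \ref{r.gammasaction} to transport kernels and images along the automorphism $\gG_s$. The paper's version is simply a terser write-up of exactly what you wrote out.
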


\begin{proof}
By Remark \ref{r.gammasaction}, $\pa \circ \gG_s = \gG_s \circ \pa$, 
and by Proposition \ref{p.differentials},
\begin{equation} \label{e.zdisjoint}
\gG_s:\ker(d_t)  \to \ker(d_st), \ \ \gG_s:\im(d_t)\to \im(d_{st}),
\end{equation}
is an isomorphism.  Applying these observations when $t=1$ yields
the result.
\end{proof}

For $t\in \C^m$, let $S_t:= d_t \pa + \pa d_t$.

\begin{Prop} \label{p.constantharmonic}
For $t\in \C^m$, $\dim(\ker(S_t)) = | W^P|$.
\end{Prop}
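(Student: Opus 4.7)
The plan is to combine upper semi-continuity of $\dim \ker S_t$ with the $Z$-equivariance of the family $S_t$ and the fact that we already know $\dim \ker S_t = |W^P|$ at $t=0$ and at every $s \in Z$.

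First, I would check that $S_t$ intertwines with $\gG_s$: since $\gG_s \circ \pa = \pa \circ \gG_s$ (Remark \ref{r.gammasaction}) and $d_{st} = \gG_s \circ d_t \circ \gG_{s^{-1}}$ (Proposition \ref{p.differentials}), a direct computation yields $S_{st} = \gG_s \circ S_t \circ \gG_{s^{-1}}$. Consequently $\gG_s$ restricts to an isomorphism $\ker S_t \xrightarrow{\sim} \ker S_{st}$, and the function $t \mapsto \dim \ker S_t$ is $Z$-invariant on $\C^m$.

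Second, I would establish $\dim \ker S_t = |W^P|$ at the two distinguished points $t \in Z$ and $t = 0$. For any $s \in Z$, Lemma \ref{l.zdisjoint} gives the disjointness of $d_s$ and $\pa$, so Proposition \ref{p.disjointkos}(2) together with \eqref{e.identifycomplexes} yields $\ker S_s \cong H^*(\fg_s, \fl_{\gD})$, which has dimension $|W^P|$ by Theorem \ref{t.rkhconstant} and the identification $\dim H^*(G/P) = |W^P|$. At $t = 0$, Theorem \ref{t.disjointkostant} provides the disjointness of $d_0$ and $\pa$, and the same reasoning gives $\dim \ker S_0 = |W^P|$.

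Third, I would apply semi-continuity. Since $d_t$ depends polynomially on $t$ (Lemma \ref{l.ccvb}) and $\pa$ is fixed, the operator $S_t$ depends polynomially on $t$, so $\rank(S_t)$ is lower semi-continuous and $\dim \ker S_t$ is upper semi-continuous in the Zariski topology. Both
\[
V_{+} := \{t \in \C^m : \dim \ker S_t \ge |W^P|\}
\quad\text{and}\quad
V_{-} := \{t \in \C^m : \dim \ker S_t > |W^P|\}
\]
are therefore Zariski closed. The set $V_{+}$ contains the dense open subset $Z$, so $V_{+} = \C^m$, giving the lower bound $\dim \ker S_t \ge |W^P|$ for all $t$. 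For the matching upper bound, I would use that $V_{-}$ is $Z$-invariant by the first step, that $0 \notin V_{-}$ by the second step, and the key observation that every $Z$-orbit in $\C^m$ is of the form $Z \cdot p_J$ for some $J \subseteq \{1,\dots,m\}$ (Proposition \ref{p.iso3}) and has closure equal to the coordinate subspace $\{t : t_i = 0 \text{ for } i \notin J\}$, which contains $0$. Thus any nonempty $Z$-invariant Zariski closed subset of $\C^m$ must contain $0$, forcing $V_{-} = \emptyset$. Combining the two inequalities gives the result.

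The main obstacle is the last orbit-closure argument: one must know both that $V_{-}$ is $Z$-invariant (guaranteeing it is a union of orbit closures) and that every such closure passes through $0$ (which is the payoff of the $Z$-orbit description in Proposition \ref{p.iso3}). Once these are combined with the anchor point $\dim \ker S_0 = |W^P|$, the proof is short.
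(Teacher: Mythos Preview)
Your proof is correct and follows essentially the same approach as the paper, which establishes $\dim \ker S_t = |W^P|$ at $t=0$ and on $Z$ and then simply defers to ``families as in the proof of Theorem \ref{t.rkhconstant}''; your argument is a careful unpacking of precisely that reference via $Z$-invariance of $\rank S_t$ and the fact that every $Z$-orbit closure in $\C^m$ contains $0$. One minor citation quibble: the classification of $Z$-orbits by the points $p_J$ is stated in the text just before Proposition \ref{p.iso3}, not in that proposition itself, which only describes the Lie algebras $\fg_{p_J}$.
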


\begin{proof}
By Lemma \ref{l.zdisjoint} and Proposition \ref{p.disjointkos}, 
$\dim(\ker(S_s)) \cong H_*(C^{\cdot}, \pa)$ for
$s\in Z$.  By Theorem \ref{t.disjointkostant}, it follows
that $\dim(H_*(C^{\cdot}, \pa)) = \dim(\ker(S_0)) = \dim(H^*(C^{\cdot}, d_0))$,
and this last space has dimension $|W^P|$ by Theorem \ref{t.kostant61}.
Thus,  it follows that $\dim(\ker(S_t))=|W^P|$ for $t=0$ and for
$t\in Z$. Using families as in the proof of Theorem \ref{t.rkhconstant}, the
result follows.
\end{proof}

\begin{Prop} \label{p.harmonicclosed}
For all $t\in \C^m$, $\ker(S_t) \subset \ker(d_t)$ and $\ker(S_t) \subset
\ker(\pa)$.
\end{Prop}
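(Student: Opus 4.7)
The plan is to promote the disjointness we already know on the dense subset $Z \subset \C^m$ (from Lemma \ref{l.zdisjoint}) to all of $\C^m$ by a vector bundle / continuity argument. The two key inputs are that $\dim \ker(S_t) = |W^P|$ is constant by Proposition \ref{p.constantharmonic}, and that $\dim \ker(d_t)$ is constant by Remark \ref{r.kerdtcon}. These constancy statements turn the relevant kernels into sub-bundles of a trivial ambient bundle, and the inclusions we want become the vanishing of certain morphisms of vector bundles.

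More precisely, I would first note that $t \mapsto S_t = d_t \pa + \pa d_t$ is a morphism from $\C^m$ to $\End(C^{\cdot}(\fr))$: $d_t$ depends polynomially on $t$ by Lemma \ref{l.ccvb} and $\pa$ is constant in $t$. Let $\ce = \C^m \times C^{\cdot}(\fr)$ be the trivial bundle. Since $\dim \ker(S_t)$ is constant, the kernels assemble into a sub-bundle $\cm_S \subset \ce$; likewise, $\cm_d \subset \ce$ with fiber $\ker(d_t)$ is a sub-bundle, and $\C^m \times \ker(\pa)$ is the trivial sub-bundle with fiber $\ker(\pa)$.

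Next, consider the morphism of vector bundles
\begin{equation}\label{e.bundlemap}
\cm_S \hookrightarrow \ce \twoheadrightarrow \ce/\cm_d.
\end{equation}
For $t \in Z$, Lemma \ref{l.zdisjoint} asserts that $d_t$ and $\pa$ are disjoint, so by Proposition \ref{p.disjointkos}(1) one has $\ker(S_t) = \ker(d_t) \cap \ker(\pa) \subset \ker(d_t)$, i.e.\ the fiber of the morphism \eqref{e.bundlemap} at every $t \in Z$ is zero. Locally in any trivialization, this morphism is a matrix of regular functions on $\C^m$ that vanishes on the dense subset $Z$; since $\C^m$ is irreducible, these functions vanish identically. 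Hence the morphism \eqref{e.bundlemap} is zero, which gives $\ker(S_t) \subset \ker(d_t)$ for every $t \in \C^m$.

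The second inclusion is proved the same way, replacing $\cm_d$ in \eqref{e.bundlemap} with the trivial sub-bundle $\C^m \times \ker(\pa)$ (and using Proposition \ref{p.disjointkos}(1) on $Z$ to conclude that the resulting morphism vanishes on $Z$). The only technical obstacle is really the bookkeeping needed to justify that $\cm_S$ and $\cm_d$ are genuine sub-bundles, which is exactly the content of the two constant-rank statements cited above; once those are in hand, the passage from $Z$ to all of $\C^m$ is a formal consequence of irreducibility of $\C^m$ and continuity of bundle morphisms.
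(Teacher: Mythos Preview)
Your proof is correct and follows essentially the same strategy as the paper: use the constant-rank facts (Proposition \ref{p.constantharmonic} and Remark \ref{r.kerdtcon}) together with the known inclusions on the dense set $Z$ (via Lemma \ref{l.zdisjoint} and Proposition \ref{p.disjointkos}) and a continuity argument to pass to all of $\C^m$. The only difference is packaging: you phrase the continuity step as the vanishing of a vector bundle morphism $\cm_S \to \ce/\cm_d$ on a dense set, whereas the paper expresses the same thing by mapping $t \mapsto (\ker S_t, \ker d_t)$ into a product of Grassmannians and using closedness of the incidence variety $\{(V_1,V_2): V_1 \subset V_2\}$ --- these are standard translations of one another.
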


\begin{proof}
Let $a = |W^P| \le b = \dim \ker d_0$.  Set
$$
M = \{ (V_1, V_2) \in \Gr(a, C^{\cdot}(\fr)) \times \Gr(b, C^{\cdot}(\fr)) \ | \ V_1 \subseteq V_2 \}.
$$
As $M$ fibers over $\Gr(a, C^{\cdot}(\fr))$ with fibers isomorphic to
$\Gr(b-a, \Gr(a, C^{\cdot}(\fr)))$, $M$ is complete, hence closed in
$ \Gr(a, C^{\cdot}(\fr)) \times \Gr(b, C^{\cdot}(\fr))$.

Consider the morphism
$$
\phi: \C^m \to  \Gr(a, C^{\cdot}(\fr)) \times \Gr(b, C^{\cdot}(\fr))
$$
defined by $\phi(t) = (\ker S_t, \ker d_t)$. (cf. Proposition
\ref{p.constantharmonic} and Remark \ref{r.kerdtcon}). For $s \in Z$, $\ker S_s \subset \ker d_s$
by Lemma \ref{l.zdisjoint} and Proposition \ref{p.disjointkos}, so $\phi(s) \in M$.
Thus, $\phi^{-1}(M) \supseteq Z$.  As $\phi^{-1}(M)$ is closed and $Z$ is dense in $\C^m$,
we see that $\phi^{-1}(M) = \C^m$.  Thus, for all $t \in \C^m$, $\phi(t) \in M$, so
$\ker S_t \subseteq \ker d_t$.

To show that $\ker(S_t) \subset
\ker(\pa)$, consider the morphism
$\rho: \C^m \to \Gr(a, C^{\cdot}(\fr))$ given by
$\rho(t) = \ker(S_t)$.  For $s \in Z$, $\ker(S_s) \subset \ker(\pa) \cap \ker(d_s)$ by Lemma \ref{l.zdisjoint} and Proposition \ref{p.disjointkos},
so $\rho(s)$ lies in the closed subset $\Gr(a, \ker(\pa))$ of $\Gr(a, C^{\cdot}(\fr))$.  Arguing
as in the preceding paragraph shows that for all $t \in \C^n$, $\rho(t)$ is in $\Gr(a, \ker(\pa))$,
so $\ker S_t \subset \ker (\pa)$.
\end{proof}


\noindent{\it Proof of Theorem \ref{t.disjointness}}:
For $t \in \C^m$, we have
quotient maps $\psi_t^* : \ker(S_t) \to H^*(C^{\cdot}(\fr), d_t)$
and $\psi_*^t:\ker(S_t) \to H_*(C^{\cdot}(\fr), \pa)$ 
(cf. Lemma \ref{l.conversedisjoint}).
Observe that each of the spaces $\ker S_t$, $H^*(C^{\cdot}(\fr), d_t)$, and $H_*(C^{\cdot}(\fr), \pa)$
has dimension equal to $|W^P|$ (see Proposition
\ref{p.constantharmonic} and its proof, and Theorem \ref{t.rkhconstant}),
so to show that either of the quotient maps is an isomorphism it is enough
to show injectivity or surjectivity.

We begin by showing that  $\psi_t^* : \ker(S_t) \to H^*(C^{\cdot}(\fr), d_t)$
is an isomorphism.
Observe that $G_w(t) \in \ker(\pa)$ for $t\in \C^m$.
To see this, note that by Remark \ref{r.gammasaction}, for $s\in Z$,
 $\gG_s:\ker(\pa)
\to \ker(\pa)$ is an isomorphism. By results from
\cite{Kos:63} (see  \cite[Theorem 5.6]{EvLu:99}), it follows that
 $s_w \in \ker(\pa)$. Hence 
$\cs_w(s) \in \ker(\pa)$ by equation \eqref{e.defswt} for $s\in Z$, so
$G_w(s) \in \ker(\pa)$ for $s\in Z$ by equation \eqref{e.gwdef}. By
continuity,
$G_w(t) \in \ker(\pa)$ for $t\in \C^m$ as desired. 

Since $G_w(t) \in \ker d_t$ by Theorem \ref{t.main1},
we see that $G_w(t) \in \ker(S_t)$.  The map $\psi_{t}^*$ takes
$G_w(t)$ to its $d_t$-cohomology class.  
By Theorem
\ref{t.main1}, these classes form a basis
of  $H^*(C^{\cdot}(\fr), d_t)$, so $\psi_t^* : \ker(S_t) \to H^*(C^{\cdot}(\fr), d_t)$
is surjective, hence an isomorphism.

Since
$\psi_t^*$ is an isomorphism,
Proposition \ref{p.disjointkos} and Lemma \ref{l.conversedisjoint}
imply that $\psi_*^t$ is an isomorphism if and only if $d_t$ and $\pa$
are disjoint; the set $A$ of $t \in \C^m$ for which this holds is $Z$-invariant
by \eqref{e.zdisjoint}.  To complete the proof we will show that $A = \C^m$.
Since $\ker \psi_*^t =  \ker S_t \cap \im \pa$,  
$\psi_*^t$ is an isomorphism
if and only if $\ker S_t \cap \im \pa = 0$.  As in the proof of
Proposition \ref{p.harmonicclosed}, let $a = |W^P|$ and consider 
the morphism
$\rho: \C^m \to \Gr(a, C^{\cdot}(\fr))$ given by $\rho(t) = \ker(S_t)$.
The subset
$$
N = \{ V \in \Gr(a, C^{\cdot}(\fr)) \ | \ V \cap \im \pa = \{ 0 \} \ \}
$$
is open, so
$A = \rho^{-1}(N)$ is open.
As $d_0$ and $\pa$ are disjoint, $0 \in A$,
so $A$ is an open $Z$-invariant set in $\C^m$ containing $\{ 0 \}$.
The only such set is $\C^m$ itself, so we conclude that
$A = \C^m$, as desired. \qed

\section{Appendix on generalized Levi movability}\label{s.levimov}

\begin{center}
\small\textsc{Sam Evens, William Graham, and Edward Richmond}
\end{center}
\vspace{.1in}

In \cite{BeKu:06}, the definition of the deformed product
$\odot_\tau$ is motivated by the geometric notion of Levi-movability.
Indeed, for $u,v, w\in W^P$, Theorem 15 in \cite{BeKu:06} asserts
that $\gL_u \odot_0 \gL_v$ has nonzero $\gL_w$ coefficient
in $H^*(G/P)$
if and only if related shifted Schubert cells can be made transverse
using the action of the Levi subgroup $L$ of $P$.  In this appendix,
we generalize this result to the product $\odot_\tau$ for
any value of $\tau$ by an argument following closely the proof
of Theorem 15 in \cite{BeKu:06}. We refer to \cite{BeKu:06} for
a more complete exposition of the needed background, and for
more detailed proofs.   We thank Shrawan Kumar for useful comments
that were helpful in proving the results in this section, and 
acknowledge that he also knew how to prove these results.

Recall our convention from Remark \ref{r.indexconvention} that 
$\ga_1, \dots, \ga_m$ denote the simple roots of $R^+(\fu)$.
For a subset $J$ of $\{ 1, \dots, m \}$,
we consider the subset $K=I\cup J$ of the set of simple roots of $\fg$,
and the Levi subalgebra and subgroup $\fl_K$ and $L_K$.
Following \cite{BeKu:06}, for $w\in W^P$, let $C_w = w^{-1}BwP \subset G/P$,
and note that $[\overline{C_w}]=[X_w]$ in $H_*(G/P)$.

\begin{Def} \label{d.lmovable}
For $s > 0$, we say a $s$-tuple $(w_1, \dots, w_s)$ of elements in $W^P$ is
$L_K$-movable if for some $s$-tuple $(l_1, \dots, l_s)$ in $L_K$, 
the intersection
$l_1 C_{w_1} \cap \dots \cap l_s C_{w_s}$ is transverse and nonempty
at a point  of $L_K\cdot eP$ in $G/P$.
\end{Def}

\begin{Rem} We give an equivalent definition of $L_K$-movability, which
is analogous to the definition in \cite{BeKu:06}, and we will use
this equivalent definition in the sequel.  We claim that
a $s$-tuple $(w_1, \dots, w_s)$ in $W^P$ is $L_K$-movable
if and only if there exists a $s$-tuple $(m_1, \dots, m_s)$ of elements
of $L_K \cap P$ such that $m_1 C_{w_1} \cap \dots \cap m_s C_{w_s}$
is transverse at the identity coset $eP$ of $G/P$ (see Definition 4 in
\cite{BeKu:06}).  Indeed, one direction of this equivalence is clear.
For the other direction, assume $l_1 C_{w_1} \cap \dots \cap l_s C_{w_s}$
has nonempty transverse intersection at $lP$, with $l, l_1,\dots, l_k \in
L_K$. Then $l^{-1}l_1 C_{w_1} \cap \dots \cap l^{-1}l_s C_{w_s}$
has nonempty transverse intersection at $eP$, and each $l^{-1} l_i \in
L_K\cap P$ by Lemma 1, p. 190, in \cite{BeKu:06}.
\end{Rem}

Let $\fb_L$ be the Borel subalgebra of $\fl$ containing the root
space $\fg_\ga$ for each root $\ga \in R^+(\fl)$, and let $B_L$ be
the corresponding Borel subgroup of $L$.  
For each $w_j \in W^P$, recall the integral weight $\chi_{w_j}$ from
equation \eqref{e.chiwdef}, and let $\C_{-\chi_{w_j}}$ be the corresponding
dual representation of the Cartan subgroup $H$. Let ${\cl}(w_j)$ denote
the induced line bundle $P\times_{B_L} \C_{-\chi_{w_j}}$ over $P/B_L$.
We denote by $1$ the identity element of $W$. We let

$$
\cl := (\cl(w_1) \otimes \cl(1)^{-1}) \boxtimes \cl(w_2) \dots \boxtimes \cl(w_s)
$$
denote the corresponding external tensor product of line bundles over
$(P/B_L)^s$. In Section 3 of \cite{BeKu:06} a section $\theta$ of
$\cl$ is constructed which is $P$-invariant for the diagonal action of
$P$ on $(P/B_L)^s$. Let $T=T_{eP}(G/P)$ and let $T_{w_j}$ denote
$T_e(C_{w_j})$. Assume that
\begin{equation} \label{e.codimcondition}
\sum_{j=1}^s \codim(C_{w_j}) = \dim(G/P).
\end{equation}
The section $\theta$ vanishes at a point $p=(p_1 B_L,
\dots, p_s B_L)$ of $(P/B_L)^s$ if and only if the projection
$T\to \oplus_{j=1}^s T/p_j T_{w_j}$ is surjective 
(cf. \cite{BeKu:06}, Lemma 7).

We recall some constructions from geometric invariant theory
(see Section 4 of \cite{BeKu:06}).
For an algebraic group $A$, we let $X_*(A)$ denote the group of cocharacters
of $A$.  For $\chi \in X_*(G)$, we let
\begin{equation} \label{e.stabilityparabolic}
P(\chi) = \{ g\in G : \lim_{t\to 0 } \chi(t) g \chi(t)^{-1} \ {\rm exists \ in \ 
G } \ \}
\end{equation}
be the associated parabolic subgroup of $G$.
We say that $\chi \in X_*(P)$ is $P$-admissible if $\lim_{t\to 0} \chi(t) x$ exists
in $P/B_L$ for all $x\in P/B_L$. We denote by $z_\rho \in \fz(\fl_K)$
the element of the center of $\fl_K$ such that $\alpha_i(z_\rho)=1$ for
all $i \in \{ 1, \dots, m \} - J$. Since $G$ is of adjoint type,
there is a unique one parameter subgroup $\gl_\rho$ of $Z(L_K)$ 
such that $d\gl_\rho (1)=z_\rho$.
By Lemma 12 of \cite{BeKu:06}, it follows that $\gl_\rho$ is
$P$-admissible.
 For an algebraic group $A$ 
with $A$-equivariant line bundle $\cm$ on a $A$-variety $Y$ with
$\chi \in X_*(A)$ and $y\in Y$, note that if the limit
 $y_0:=\lim_{t\to 0} \chi(t)y$ exists, then $y_0$ is
a fixed point for the $\Cstar$-action induced by $\chi$ on $Y$, and $\Cstar$
acts via $\chi$ on the fiber $\cm_{y_0}$. We let $\mu^{\cm}(y, \chi)=n$ if
$\chi(z) \cdot s = z^n s$ for all $z\in \Cstar$, $s\in \cm_{y_0}$.

The following result is analogous to Corollary 8 in \cite{BeKu:06}, and
can be proved in the same way.

\begin{Lem}\label{l.levimovabletheta}
Let $(w_1, \dots, w_s)$ be a $s$-tuple in $(W_P)^s$ satisfying
equation \eqref{e.codimcondition}. Then

\noindent (1) The section $\theta$ is nonzero on $(P/B_L)^s$ if and only
if $\gL_{w_1} \cdot \cdots \cdot \gL_{w_s} \not= 0 \in H^*(G/P)$;

\noindent (2) The $s$-tuple $(w_1, \dots, w_s)$ is $L_K$-movable
if and only if the restriction of $\theta$ to $(L_K \cap P/B_L)^s$
is not identically zero.
\end{Lem}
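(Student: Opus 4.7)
The plan is to mimic the proof of Corollary~8 in \cite{BeKu:06}, combining Kleiman's transversality theorem with the Bruhat decomposition and the defining property of the section $\theta$ recalled above the statement.

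For part~(1), I would argue that $\gL_{w_1} \cdot \ldots \cdot \gL_{w_s} \neq 0$ in $H^*(G/P)$ if and only if there exist $g_1, \dots, g_s \in G$ with $\bigcap_{j=1}^s g_j C_{w_j}$ transverse (hence nonempty, by the codimension condition \eqref{e.codimcondition}) at some point. By $G$-equivariance of intersection numbers, this point can be translated to $eP$. The condition $eP \in g_j C_{w_j} = g_j w_j^{-1} B w_j P / P$ forces $g_j \in P \cdot w_j^{-1} B w_j$ by the Bruhat decomposition; since $w_j^{-1} B w_j P = C_{w_j}$, we may replace $g_j$ by its $P$-component $p_j \in P$ without changing the translate $g_j C_{w_j} = p_j C_{w_j}$. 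The transversality at $eP$ of $\bigcap_j p_j C_{w_j}$ is exactly the surjectivity of $T \to \bigoplus_j T/p_j T_{w_j}$, which by the recalled property of $\theta$ is equivalent to $\theta(p_1 B_L, \dots, p_s B_L) \neq 0$. Hence the cohomology product is nonzero iff $\theta$ is not identically zero on $(P/B_L)^s$.

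For part~(2), I would invoke the equivalent formulation of $L_K$-movability given in the Remark preceding the lemma: there exist $m_1, \dots, m_s \in L_K \cap P$ such that $\bigcap_j m_j C_{w_j}$ is transverse at $eP$. Since $\fb_L \subset \fl \subset \fl_K$ implies $B_L \subset L_K \cap P$, the same chain of equivalences used in part~(1), now restricted to $p_j \in L_K \cap P$, identifies $L_K$-movability with $\theta$ being not identically zero on $(L_K \cap P / B_L)^s \subset (P/B_L)^s$, proving~(2).

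The main obstacle I expect is justifying the two descent steps in part~(1): first, that $g_j C_{w_j}$ depends only on the $P$-component of $g_j$ (which requires the Bruhat decomposition together with the fact that $w_j^{-1} B w_j$ fixes $eP$), and second, that the surjectivity condition at $eP$ descends from $P^s$ down to $(P/B_L)^s$. The latter is precisely the representation-theoretic content of the line bundle $\cl(w_j) = P \times_{B_L} \C_{-\chi_{w_j}}$: the character $\chi_{w_j}$ was chosen so that the natural determinant map $\det T \to \bigotimes_j \det(T/p_j T_{w_j})$ descends, under right $B_L$-translation in each factor, to a well-defined $P$-invariant section $\theta$ of $\cl$ on $(P/B_L)^s$. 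Verifying this compatibility of weights is the principal technical step and is where the detailed argument in \cite{BeKu:06} must be invoked essentially unchanged.
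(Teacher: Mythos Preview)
Your proposal is correct and takes essentially the same approach as the paper, which simply states that the lemma ``is analogous to Corollary~8 in \cite{BeKu:06}, and can be proved in the same way.'' Your sketch fills in exactly that argument---Kleiman transversality, translation to $eP$, reduction of the $g_j$ to their $P$-components via the stabilizer $w_j^{-1}Bw_j$ of $C_{w_j}$, and identification of transversality at $eP$ with nonvanishing of $\theta$---and then restricts everything to $L_K\cap P$ for part~(2), which is precisely the intended route.
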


We now indicate how to extend the proof of Theorem 15 of \cite{BeKu:06}
to prove the following result, which is the main result of this
section.

\begin{Thm}\label{t.levimovable} Assume that $(w_1, \dots, w_s)
\in (W^P)^s$ satisfies equation \eqref{e.codimcondition}.
Then the following assertions are equivalent:

\noindent (1) The $s$-tuple $(w_1, \dots, w_s) \in (W^P)^s$
is $L_K$-movable;

\noindent (2) $\gL_{w_1} \cdot \cdots \cdot \gL_{w_s} \not= 0 \in H^*(G/P)$
and for all $z\in \fz(\fl_K)$,

\begin{equation} \label{e.centercondition}
((\sum_{j=1}^s \chi_{w_j}) - \chi_1)(z)=0;
\end{equation}

\noindent (3) $\gL_{w_1} \cdot \cdots \cdot \gL_{w_s} \not= 0 \in H^*(G/P)$
and 

\begin{equation} \label{e.rhocondition}
((\sum_{j=1}^s \chi_{w_j}) - \chi_1)(z_\rho)=0;
\end{equation}
\end{Thm}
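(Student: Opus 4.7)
My plan is to follow closely the proof of Theorem 15 of \cite{BeKu:06}, modifying it by replacing the Levi subgroup $L$ with $L_K$ throughout and taking $\gl_\rho$ as the distinguished one-parameter subgroup for the Hilbert--Mumford argument. The implication (2)$\Rightarrow$(3) is immediate, since $z_\rho \in \fz(\fl_K)$.

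For (1)$\Rightarrow$(2), I would use Lemma~\ref{l.levimovabletheta} to translate $L_K$-movability into the statement that $\theta$ does not vanish identically on $((L_K \cap P)/B_L)^s$; in particular $\gL_{w_1} \cdots \gL_{w_s} \neq 0$ by Lemma~\ref{l.levimovabletheta}(1). Since $Z(L_K) \subset H \subset B_L$ commutes with $L_K \cap P$, it acts trivially on the connected base $((L_K \cap P)/B_L)^s$, and therefore acts on each fiber of $\cl$ restricted there by a single character. The existence of a nonzero $Z(L_K)$-invariant section forces this character to vanish; evaluating at $(eB_L,\dots,eB_L)$ identifies the character as $-(\chi_{w_1}+\cdots+\chi_{w_s}-\chi_1)|_{\fz(\fl_K)}$, yielding \eqref{e.centercondition}.

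For the main implication (3)$\Rightarrow$(1), the nonvanishing of $\gL_{w_1}\cdots\gL_{w_s}$ together with Lemma~\ref{l.levimovabletheta}(1) supplies a point $y \in Y := (P/B_L)^s$ with $\theta(y) \neq 0$, and the $P$-admissibility of $\gl_\rho$ guarantees that $y_0 := \lim_{t\to 0}\gl_\rho(t)\cdot y$ exists in $Y^{\gl_\rho}$. A root-theoretic calculation exploiting that $\alpha(z_\rho)=0$ precisely for $\alpha \in R(\fl_K)$ and that $\gl_\rho \in Z(L_K)$ commutes with $L$ identifies
\[
Y^{\gl_\rho} = ((L_K \cap P)/B_L)^s,
\]
a connected variety on which $\gl_\rho$ acts by a single constant weight $\mu$ on $\cl$. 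Computing at the base point gives $\mu = -(\chi_{w_1}+\cdots+\chi_{w_s}-\chi_1)(z_\rho)$, which vanishes by \eqref{e.rhocondition}. The standard Hilbert--Mumford principle applied to the $\gl_\rho$-invariant section $\theta$ then yields $\theta(y_0)\neq 0$ whenever $\theta(y)\neq 0$ and $\mu^{\cl}(y,\gl_\rho) = 0$, so $\theta$ does not vanish identically on $((L_K\cap P)/B_L)^s$, and Lemma~\ref{l.levimovabletheta}(2) gives $L_K$-movability.

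The principal technical obstacle is the identification $(P/B_L)^{\gl_\rho} = (L_K\cap P)/B_L$---requiring a separate analysis of the Levi and unipotent parts of $P/B_L$---together with the careful sign-tracking in the Hilbert--Mumford argument relating vanishing of the numerical invariant $\mu^{\cl}(y,\gl_\rho)$ to nonvanishing of $\theta(y_0)$. Both are close adaptations of the $K=I$ case handled in \cite[Section 4]{BeKu:06}, and require no fundamentally new ideas beyond the correct generalization.
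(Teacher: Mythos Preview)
Your proposal is correct and follows essentially the same approach as the paper's proof. The only cosmetic difference is that for (3)$\Rightarrow$(1) you identify the fixed-point locus $Y^{\gl_\rho} = ((L_K\cap P)/B_L)^s$ abstractly and invoke connectedness, whereas the paper computes the limit $\lim_{t\to 0}\gl_\rho(t)\cdot ulB_L = u_0 l B_L$ explicitly to see it lands in $(L_K\cap P)/B_L$; both then appeal to the same Hilbert--Mumford input (Proposition~10(c) and Lemma~14 of \cite{BeKu:06}) to conclude $\theta(y_0)\neq 0$.
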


\begin{proof} To prove (1) implies (2), we assume that 
$(w_1, \dots, w_s)$ is $L_K$-movable.
By part (2) of Lemma \ref{l.levimovabletheta}, the restriction $\hat{\theta}$
of $\theta$ to $(L_K \cap P/B_L)^s$ is nonzero, so in particular
the restriction of the line bundle $\cl$ to $(L_K \cap P/B_L)^s$
has a $L_K \cap P$-invariant section for the diagonal action of
$L_K \cap P$ on $(L_K \cap P/B_L)^s$.
 It follows as in \cite{BeKu:06} that the center $Z(L_K)$ acts
trivially on the fiber $\cl_{eP}$, which implies equation \eqref{e.centercondition}.

It remains to prove that (3) implies (1). For this,
 note that by part
 (1) of  Lemma \ref{l.levimovabletheta},
the first condition in (3) implies that there exists $x=(p_1 B_L, \dots,
p_s B_L) \in (P/B_L)^s$ such that $\theta(x)\not= 0$.
For $u\in U$, the unipotent radical of $P$,
 let $u_0 = \lim_{t\to 0} \gl_\rho (t) u \gl_\rho (t)^{-1}$ and note that
$u_0 \in L_K \cap P$. Let $v=ulB_L \in P/B_L$ with
$u\in U$ and $l\in L$. Since $\gl_\rho \in X_*(Z(L))$, it follows
that $\lim_{t\to 0} \gl_\rho(t) v = u_0 l B_L \in (L_K \cap P)/B_L$.
Hence, for any $y\in (P/B_L)^s$,
 $y_0 := \lim_{t\to 0} \gl_\rho (t) y \in (L_K \cap P/B)^s$.
It follows from Lemma 14, Proposition 10, and equation (15)
 in \cite{BeKu:06} that
$\mu^{\cl}(y, \gl_\rho)= (\chi_1 - (\sum_{j=1}^s \chi_{w_j}))(z_\rho)$
 for each $y \in (P/B_L)^s$. 
Hence, by the assumption \eqref{e.rhocondition} and  Proposition
10(c) in \cite{BeKu:06}, it follows that $\theta(x_0)\not= 0$.  Thus,
by part (2) of Lemma \ref{l.levimovabletheta}, the $s$-tuple
$(w_1, \dots, w_s) \in (W^P)^s$ is $L_K$-movable.
\end{proof}

Let $\tau_K = p_J^2 \in \C^m$, so $(\tau_K)_i = 1$ for $i\in J$, and
$(\tau_K)_i = 0$ for $i\in \{ 1, \dots, m \} - J$.

\begin{Cor} \label{c.lmovproduct}

\noindent (1) A $s$-tuple $(w_1, \dots, w_s)$ of elements in $W^P$ satisfying
\eqref{e.codimcondition} is $L_K$-movable if and only
if 
\begin{equation} \label{e.prodcondlevi}
\gL_{w_1} \odot_{\tau_K} \cdots \odot_{\tau_K} \gL_{w_s} \not= 0 \in H^*(G/P)
\end{equation}
\noindent (2) For $u, v, w\in W^P$ with $l(u) + l(v)= \dim(G/P) + l(w)$, then
$\gL_u \odot_{\tau_K} \gL_v$ has nonzero $\gL_w$ coefficient if and only if the
triple $(u,v, w^*)$ is $L_K$-movable.
\end{Cor}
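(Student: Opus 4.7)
My plan is to match the Belkale-Kumar formula at $\tau = \tau_K$ with the criteria of Theorem \ref{t.levimovable}, obtaining part (1) directly and deducing part (2) from part (1) via a duality identity inside $\odot_{\tau_K}$.

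For part (1), I would iterate \eqref{e.BK} and observe that the $\tau$-exponents telescope, so the $s$-fold Belkale-Kumar product has $\gL_w$-coefficient $\bigl(\prod_{i=1}^m \tau_i^{(\chi_w - \sum_j \chi_{w_j})(x_i)}\bigr) c_w$, where $c_w$ is the classical structure constant of $\gL_w$ in $\gL_{w_1} \cdots \gL_{w_s}$. Condition \eqref{e.codimcondition} forces the product into top degree, where only $\gL_w = \gL_1$ contributes, and an inductive extension of Corollary \ref{c.globalfunction} makes the telescoped exponents nonnegative integers whenever $c_w \neq 0$. Since $(\tau_K)_i$ is $1$ for $i \in J$ and $0$ otherwise, and $z_\rho = \sum_{i \in \{1,\dots,m\}-J} x_i$ (read off from $\alpha_i(z_\rho)=1$ on $\{1,\dots,m\}-J$ and $\alpha_i(z_\rho)=0$ elsewhere), the coefficient at $\tau_K$ is nonzero iff $c_1 \neq 0$ and every exponent indexed by $i \in \{1,\dots,m\}-J$ vanishes. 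By nonnegativity the latter collapses to $(\sum_j \chi_{w_j} - \chi_1)(z_\rho) = 0$, which together with $\gL_{w_1} \cdots \gL_{w_s} \neq 0$ is precisely condition (3) of Theorem \ref{t.levimovable}.

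For part (2), I would apply part (1) to the triple $(u, v, w^*)$, which satisfies \eqref{e.codimcondition} exactly when $l(u) + l(v) = \dim(G/P) + l(w)$. The bridge is the identity $\gL_w \odot_{\tau_K} \gL_{w^*} = \gL_1$: classical Poincaré duality for Schubert classes gives $d_{w w^*}^1 = 1$, and the accompanying monomial $\prod_i (\tau_K)_i^{(\chi_1 - \chi_w - \chi_{w^*})(x_i)}$ equals $1$ because its exponent sum over $i \in \{1,\dots,m\}-J$ is $(\chi_1 - \chi_w - \chi_{w^*})(z_\rho) = 0$ by the auxiliary identity below, forcing each nonnegative summand to vanish. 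Consequently the $\gL_1$-coefficient of $(\gL_u \odot_{\tau_K} \gL_v) \odot_{\tau_K} \gL_{w^*}$ equals the $\gL_w$-coefficient of $\gL_u \odot_{\tau_K} \gL_v$, and part (1) for $(u, v, w^*)$ closes the loop.

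The auxiliary identity $\chi_{w^*}(z_\rho) = (\chi_1 - \chi_w)(z_\rho)$ combines Lemma \ref{l.chiveta} (which gives $\chi_{w^*} = w_{0,P}(\eta_w)$) with the fact that $w_{0,P}$ fixes $z_\rho$ pointwise: since $\fl \subset \fl_K$ we have $z_\rho \in \fz(\fl_K) \subset \fz(\fl)$, and each simple reflection $s_i$ with $i \in I$ acts trivially on $\fz(\fl)$, hence so does $w_{0,P}$; then $\eta_w(z_\rho) = (\chi_1 - \chi_w)(z_\rho)$ follows from $\chi_w + \eta_w = \sum_{\beta \in R^+(\fu)} \beta = \chi_1$. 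The most delicate step in the whole argument is the extension of Corollary \ref{c.globalfunction} to iterated products; this is handled by induction on $s$, using that a nonzero iterated Schubert constant forces some bracketing to have all its intermediate binary constants nonzero, so that the telescoped exponent is a sum of nonnegative binary exponents.
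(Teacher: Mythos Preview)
Your proposal is correct and follows essentially the same route as the paper's proof. The paper's argument for part (1) cites Proposition~17(c) of \cite{BeKu:06} for the telescoped formula $\gL_{w_1}\odot_\tau\cdots\odot_\tau\gL_{w_s}=\bigl(\prod_i\tau_i^{(\chi_1-\sum_j\chi_{w_j})(x_i)}\bigr)d\,\gL_1$ and then observes, using $\fz(\fl_K)=\bigoplus_{i\in\{1,\dots,m\}-J}\C x_i$, that nonvanishing at $\tau_K$ is equivalent to condition~(2) of Theorem~\ref{t.levimovable}; for part (2) it simply invokes Lemma~16(c),(d) of \cite{BeKu:06}. You instead rederive the telescoping, supply the nonnegativity of the exponents via Schubert positivity and Corollary~\ref{c.globalfunction} (landing on condition~(3) rather than~(2)), and for part (2) give the Poincar\'e-duality identity $\gL_w\odot_{\tau_K}\gL_{w^*}=\gL_1$ explicitly rather than citing \cite{BeKu:06}. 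These are exactly the facts hidden behind the paper's citations, so the substance is the same; your version is more self-contained, while the paper's is terser.
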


\begin{proof}
By \eqref{e.codimcondition}, 
$\gL_{w_1} \cdots \gL_{w_s} = d\gL_1$ for some $d\in \Z$.
By Proposition 17(c) in \cite{BeKu:06},
$$
\gL_{w_1} \odot_{\tau_K} \cdots \odot_{\tau_K} \gL_{w_s} = 
\prod_{i=1}^m \tau_i^{((\chi_1 - (\sum_{j=1}^s\chi_{w_j}))(x_i))} d\gL_1.
$$
Since $\fz(\fl_K)=\sum_{i=1. \dots, m; i\not\in J } \C x_i$, 
it is routine to check that this expression is nonzero at $\tau_K$
if and only if condition (2) of Theorem \ref{t.levimovable} is satisfied.
Part (1) of the corollary follows easily from Theorem \ref{t.levimovable}.
 Part (2) follows from (1) using
Lemma 16 (c), (d) of \cite{BeKu:06}.
\end{proof}

\begin{Rem}\label{r.tauKsuffice}
For $\tau \in \C^m$, let $J = \{ i \in \{ 1, \dots, m \} : \tau_i \not= 0 \}$,
 let $K=I\cup J$, and let $L_{\tau}=L_K$.  By equation \eqref{e.fgtvsftpt}, the statement
of the previous corollary is true for any $\tau \in \C^m$
if we replace  $\odot_{\tau_K}$ by $\odot_{\tau}$ and $L_K$ by $L_\tau$.
\end{Rem}

\end{document}